\theoremstyle{theorem}
\newtheorem{theo}{Théorème}[section]
\newtheorem{prop}[theo]{Proposition}
\newtheorem{lemm}[theo]{Lemme}
\newtheorem{maintheo}{Théorème}
\theoremstyle{definition}
\newtheorem{defi}[theo]{Définition}
\newtheorem{rema}[theo]{Remarque}
\newtheorem{conv}{Convention\!\!}
\DeclareMathAlphabet{\mathrmsl}{OT1}{cmr}{m}{sl}
\newcommand{\cA}{\mathcal{A}}
\newcommand{\cC}{\mathcal{C}}
\newcommand{\cD}{\mathcal{D}}
\newcommand{\cH}{\mathcal{H}}
\newcommand{\cM}{\mathcal{M}}
\newcommand{\cL}{\mathcal{L}}
\newcommand{\cQ}{\mathcal{Q}}
\newcommand{\cR}{\mathcal{R}}
\newcommand{\II}{\mathbb{I}}
\renewcommand{\leq}{\leqslant}
\renewcommand{\geq}{\geqslant}
\newcommand{\NM}{{\mathbb N}}
\newcommand{\setR}{{\mathbb R}}
\newcommand{\setC}{{\mathbb C}}
\newcommand{\setN}{{\mathbb N}}
\newcommand{\dA}{\dot{\mathcal{A}}}
\DeclareMathOperator{\Ric}{Ric}
\DeclareMathOperator{\Tr}{tr}
\newcommand{\vol}{\operatorname{vol}}
\newcommand{\Ker}{\operatorname{Ker}}
\newcommand{\nabw}{\widetilde{\nabla}}
\renewcommand{\geq}{\geqslant}
\renewcommand{\leq}{\leqslant}
\newcommand{\eps}{\varepsilon}
\renewcommand{\exp}{\operatorname{exp}}
\newcommand{\opI}{\mathcal{I}}
\newcommand{\Ltrans}{L^{\textrm{trans}}}
\newcommand{\nabtrans}{\nabw^{\textrm{trans}}}
\newcommand{\nabemphtrans}{\nabw^{\textrm{\emph{trans}}}}
\renewcommand{\Re}{\operatorname{Re}}
\renewcommand{\Im}{\operatorname{Im}}
\newcommand{\widebar}{\bar}
\newcommand{\proofof}[1]{\end{#1}\begin{proof}}
\newcounter{mnotecount}[section]
\renewcommand{\themnotecount}{\thesection.\arabic{mnotecount}}
\newcommand{\mnote}[1]
{\protect{\stepcounter{mnotecount}}$^{\mbox{\footnotesize  $
      \bullet$\themnotecount}}$ \marginpar{\raggedright\tiny\em
    $\!\!\!\!\!\!\,\bullet$\themnotecount: #1} }
\begin{document}
\title[Demi-espace hyperbolique et poly-homogénéité]{Analyse sur un
  demi-espace hyperbolique et poly-homogénéité locale}
\author{O. Biquard} \address{UPMC Université Paris 6 \\ UMR 7586 CNRS, Institut de mathématiques de Jussieu}
\author{M. Herzlich} \address{ Université Montpellier 2 \\ UMR 5149 CNRS, Institut de mathématiques et de modélisation de 
Montpellier} 

\maketitle

\section*{Introduction}
L'objectif de cet article est de donner une preuve de la polyhomogénéité des métriques d'Einstein asymptotiquement
hyperboliques réelles ou complexes dans le cas local. Ces métriques, introduites par Fefferman et Graham dans le cas réel 
au milieu des années 80 \cite{FefGra85}, se sont révélées être des instruments essentiels d'étude de la géométrie conforme ou 
de la géométrie CR~: une métrique asymptotiquement hyperbolique induit en effet sur son \emph{bord à l'infini} une structure
conforme ou CR. Réciproquement, des théo­rèmes d'existence de métriques d'Einstein asymptotiquement hyperboli­ques avec
structure à l'infini fixée ont été donnés ces dernières années \cite{And05,Biq00,GraLee91}. 

L'idée principale de la théorie est que l'équation d'Einstein rigidifie suffisamment la situation pour que les propriétés de
la structure placée à l'infini se reflète fidèlement dans la géométrie de la métrique asymptotiquement hyperbolique. Un outil
essentiel de cette correspondance est l'existence d'un développement en série de ces métriques d'Einstein
au voisinage de l'infini d'un type particulier, dit \emph{développement polyhomogène} (voir
la section \ref{sec:six} pour une définition précise).
Les premiers termes de ce développement sont en général \emph{formellement déterminés} (c'est-à-dire calculables
à partir d'un jet d'ordre fini de la structure à l'infini) et servent de briques de base pour la construction
d'invariants conformes ou CR. Les termes suivants dépendent de la géométrie globale de $g$ et sont donc \emph{formellement 
indéterminés} mais les premiers d'entre eux jouent un rôle crucial dans nombre de questions analytiques (voir plus bas).

Dans le cas hyperbolique réel, nos résultats étendent ceux de
Chru\'sciel, Delay, Lee et Skinner \cite{ChrDelLeeSki05} qui ont prouvé
la polyhomogénéité dans le cas global, c'est-à-dire quand le bord à
l'infini est une variété fermée (compacte sans bord), et de Helliwell
\cite{Hel08} qui a donné une preuve de la polyhomogénéité dans le cas
local en dimension paire. Les résultats nouveaux sont donc, d'une part le
cas complexe, d'autre part le cas réel local en dimension impaire.

Notre premier résultat peut s'énoncer comme suit (pour un énoncé plus
précis, le lecteur est invité à se référer au théorème
\ref{th:mainphg}).

\begin{maintheo}\label{th:1}
  Soit $M^n$ une variété difféomorphe à une demi-boule dont
  l'intérieur est muni d'une métrique d'Einstein $g$, asymptotiquement
  hyperbolique et dont la métrique induite sur le bord à l'infini est
  lisse. Alors la métrique $g$ possède un développement polyhomogène
  au voisinage de son bord à l'infini.
\end{maintheo}

Comme conséquence intéressante, toute métrique d'Einstein
asymptotiquement hyperbolique complexe satisfait les hypothèses de
\cite{BM} et peut donc être choisie comme facteur d'un produit de
variétés d'Einstein à déformer.

Une fois l'existence établie, la forme du développement polyhomogène a
été précisée par Fefferman et Graham \cite{FefGra85,Gra00,FG} 
dans le cas réel, alors que Seshadri \cite{Ses09} en a étudié les
premiers termes dans le cas complexe. L'information essentielle est l'existence de
coordonnées privilégiées au voisinage de l'infini, dans lesquelles le développement possède une structure
particulière : un voisinage de l'infini est écrit comme $(A,+\infty)×\partial M$,
et cette décomposition peut être faite de sorte que les coordonnées
soient géodésiques, c'est-à-dire de sorte que la métrique s'écrive
comme un produit tordu
$$ g = dr^2 + g_r , $$
où $g_r$ est une famille de métriques sur $\partial M$.  Dans le cas réel, le
développement s'écrit, quand $r\to+\infty$,
$$ g_r = e^{2r}\big(\gamma + \gamma_2 e^{-2r} + \cdots + \gamma_{n-2}e^{-(n-2)r} + h re^{-(n-1)r} + \gamma_{n-1}e^{-(n-1)r} + \cdots \big) $$
où $\gamma$ est une métrique sur le bord à l'infini, et $\gamma_2$, \dots, $\gamma_{n-2}$ et $h$ 
sont des champs de formes quadratiques sur l'espace tangent du bord à l'infini for­mel­lement déterminés, c'est-à-dire 
calculables comme des polynômes en un jet d'ordre fini de $\gamma$ et de son inverse. Le premier terme formellement 
indéterminé $\gamma_{n-1}$ est également un champ de formes quadratiques sur le bord à l'infini, et il pilote la suite 
du développement au sens où sa valeur, jointe à celle de 
$\gamma$, \dots, $\gamma_{n-2}$ et $h$, détermine celle de tous les termes suivants. 
La métrique $g$ est quasi-isométrique à son terme principal $g_0$, donné par
$$ g_0 = dr^2 + e^{2r}\gamma . $$

Dans le cas complexe, des coordonnées similaires existent : le
terme principal est donné, quand $r\to+\infty$, par
$$g_0=dr^2+\frac14 e^{2r}\eta^2+\frac14 e^r\gamma,$$
où $\eta$ est une 1-forme de contact sur le bord et $\gamma$ une métrique sur
la distribution de contact, induite par une structure CR. Alors existe
un développement similaire en les puissances de $e^{-r/2}$,
$$ g = g_0 + g_1 e^{-r/2} + g_2 e^{-r} + \cdots , $$
dans lequel le premier terme indéterminé \cite{Biq00} arrive à l'ordre
$n=2m$ (c'est-à-dire au terme d'ordre $e^{-mr}$). Seshadri \cite{Ses09} a mis en évidence deux termes
logarithmiques possibles dans le développement, dont le plus élevé, correspondant
au terme logarithmique de Lee-Melrose \cite{LeeMel82} pour une métrique de
Kähler-Einstein, apparaît à l'ordre $n+2$ (c'est-à-dire au terme d'ordre $e^{-(m+1)r}$). 

\smallskip

Une application de ces résultats est un résultat de continuation
unique pour les métriques d'Einstein asymptotiquement hyperbolique
réelles ou complexes.

\begin{maintheo}\label{th:2}
  Soit $M^n$ une variété difféomorphe à une demi-boule dont
  l'intérieur est muni de deux métriques d'Einstein asymptotiquement
  hyperboliques $g_1$ et $g_2$. Si on a $|g_1-g_2|_{g_1} =
  o(e^{-(n-1)r})$ dans le cas réel, ou $|g_1-g_2|_{g_1}
  =o(e^{-(m+1)r})$ dans le cas complexe ($n=2m$), alors il existe un
  difféomorphisme $\Phi$ de la demi-boule préservant point par point le
  bord à l'infini tel que $g_2=\Phi^*g_1$.
 \end{maintheo}
Dans le cas réel, l'hypothèse de coïncidence jusqu'à l'ordre $n-1$ se
résume à la coïncidence des structures conformes $\gamma$ et du terme
indéterminé $g_{n-1}$. Par polyhomogénéïté (théorème \ref{th:1}), on a
alors une coïncidence à un ordre infini en jauge géodésique :
$|g_1-g_2| =O(e^{-\infty r})$ . Les résultats de \cite{Biq08} impliquent
alors $g_1=g_2$, toujours en jauge géodésique, donc le théorème
\ref{th:2} ne nécessite pas de démonstration dans ce cas.

Dans le cas complexe, le nombre et la nature des termes indéterminés
reste un peu mystérieuse, mais l'hypothèse de coïncidence jusqu'à
l'ordre $n+2$ assure l'égalité de tous les termes indéterminés de
$g_1$ et $g_2$, et donc $|g_1-g_2| =O(e^{-\infty r})$ dans une jauge
géodésique. À partir de cette coïncidence à un ordre infini est
employée une méthode similaire à celle de \cite{Biq08}, mais
compliquée en raison de la complexité de la géométrie hyperbolique
complexe d'une demi-boule.

\smallskip

L'article est organisé comme suit~: dans un premier temps, nous
mettons en place des éléments d'analyse dans un demi-espace
hyperbolique. Le cadre géométrique est clair dans le cas réel, mais
certaines formules utiles semblent être absentes de la littérature
dans le cas complexe, la première section de cet article leur est donc
consacrée.  Dans la deuxième section, on introduit des espaces à
double poids adaptés à la géométrie du demi-espace, nécessaires à
cause des coins du bord, et on les utilise pour démontrer des
théorèmes d'isomorphisme pour des opérateurs du type du laplacien.  La
troisième section introduit les métriques asymptotiquement
hyperboliques (d'Einstein ou non) dans le cadre local que nous
souhaitons considérer, et les décrit dans un système de coordonnées
adaptées à la géomé­trie au voisinage de l'infini. La quatrième
section place ces métriques dans une jauge locale adaptée vis-à-vis
des difféomorphismes~; c'est ici que l'ana­lyse développée
précédemment s'introduit naturellement. La cinquième section étend
cette analyse à toutes les métriques asymptotiquement hyperboliques,
extension qui est immédiatement utilisée dans la sixième section pour
démontrer un lemme technique essentiel de décroissance des dérivées
transverses des éléments du noyau des opérateurs du type du laplacien.
La septième section contient la preuve de la polyhomogénéité. La
section 8, enfin, contient la preuve de la continuation unique.

\section{Un modèle du demi-espace hyperbolique complexe}
\label{sec:un}
Considérons tout d'abord, à titre d'exemple, un demi-espace
hyperbo­lique réel $\setR H^n_+\subset\setR H^n$, délimité par un hyperplan
totalement géodésique $\setR H^{n-1}\subset\setR H^n$, espace fixe d'une inversion
$\iota$. Dans le modèle du demi-espace supé­rieur $\{ x_1>0 \}$, avec
métrique $\frac{dx_1^2+\cdots+dx_n^2}{x_1^2}$, on peut choisir l'inver­sion
$\iota:x_n\mapsto-x_n$, donc $\setR H^n_+=\{x_n > 0\}$, et la métrique de $\setR H^n$
s'écrit sous la forme agréable
\begin{equation}
  \label{eq:23}
  g_h = dr^2 + \cosh^2(r) \gamma ,
\end{equation}
où $r$ est la distance (avec signe) au bord $\setR H^{n-1}=\{ x_n=0\}$ et $\gamma$ est la
métrique hyperbolique sur ce bord $\setR H^{n-1}$. Dans
ces coordonnées, l'inver­sion $\iota$ est $r\mapsto -r$.

La situation est considérablement plus compliquée dans le cas
hyperbolique complexe, car il n'y a pas d'hyperplan totalement
géodésique. Classiquement, on choisit à la place un bisecteur, qui est
un hyperplan minimal.

Plus précisément, l'espace hyperbolique complexe peut se décrire comme
\begin{equation}
  \label{eq:1}
  \setC H^m = \{ [x]\in \setC P^m, \langle x,x\rangle < 0 \}
\end{equation}
pour le produit hermitien lorentzien $\langle x,x'\rangle = 4 \Re(\bar x_0 x'_m) +
\sum_1^{m-1} \bar x_j x'_j $.  La métrique hyperbolique complexe, à
courbure sectionnelle comprise entre $-1$ et $-\frac14$, est alors
donnée au point $x\in \setC H^m$ par
\begin{equation}
  \label{eq:2}
  (g_h)_x(X,X) = 4 \frac{\langle x,x\rangle\langle X,X\rangle-|\langle x,X\rangle|^2}{-\langle x,x\rangle^2} .
\end{equation}
Soit $f:\setC H^m\to\setR_+$ définie par $f(x)=-\frac{\langle x,x\rangle}{4|x_0|^2}$. Dans la
carte affine $z_0=-1$, on peut la voir comme la fonction $f:\setC^m\to\setR$
obtenue par $f(z_1,\dots,z_m)=\Re(z_m) - \frac14 \sum_1^{m-1}
|z_i|^2$. Cela mène au modèle du demi-espace de Siegel, dans lequel
l'espace hyperbolique complexe est le domaine
\begin{equation}
  \label{eq:3}
  \setC H^m = \{  f(z_1,\dots,z_m) > 0 \} ,
\end{equation}
et sa métrique s'écrit
\begin{equation}
  \label{eq:5}
  g = \frac{df^2+\eta^2}{f^2}+\frac{|dz_1|^2+\cdots +|dz_{m-1}|^2}f ,
\end{equation}
où l'on a posé $z_m = f+\frac14 \sum_1^{m-1} |z_i|^2 - i v$ et
\begin{equation}
  \label{eq:6}
  \eta = dv + \frac12 \Im(\bar z_1 dz_1 + \cdots + \bar z_{m-1} dz_{m-1}) .
\end{equation}

On considère le demi-espace hyperbolique complexe,
\begin{equation}
  \label{eq:7}
  \setC H^m_+ = \{ \Im(z_m) < 0 \} = \{ v>0 \} ,
\end{equation}
dont le bord est le bisecteur
\begin{equation}
  \label{eq:8}
  B = \{ \Im(z_m)=0 \} .
\end{equation}
Ce bisecteur est l'hypersurface équidistante des deux points
$(0,± z_m)$, pour un $z_m\in \setR$. La géodésique
\begin{equation}
  \label{eq:19}
  \{ (0,iz_m), z_m\in \setR \}
\end{equation}
engendre l'\emph{épine complexe} du bisecteur, à savoir la géodésique complexe
\begin{equation}
  \label{eq:9}
  \Sigma=\{ (0,z_m), \Re(z_m)>0 \},
\end{equation}
qui intersecte le bisecteur en son \emph{épine}, la géodésique (réelle)
\begin{equation}
  \label{eq:10}
  \sigma = \Sigma\cap B = \{ (0,z_m), z_m\in \setR_+^* \} .
\end{equation}

Le but de cette section est de donner une expression de la métrique
du demi-espace hyperbolique complexe en termes de la distance et de la
projection sur le bisecteur (nous n'avons pas trouvé cette formule dans 
la littérature). Prenons sur le bisecteur des coordonnées
$(\tau,\rho,y)\in \setR×\setR_+^*× S^{2m-3}$, telles que
\begin{equation}
  \label{eq:11}
  \big(((z_1,\dots,z_{m-1}),z_m\big)
 = \big(2e^{\frac \tau2}\tanh(\tfrac \rho2)y,e^\tau\big) .
\end{equation}
Soit $\theta=J\frac{d\rho}\rho$ la forme de contact standard sur la sphère
$S^{2m-3}$, et $\gamma'$ la restriction de la métrique standard de la sphère
à la distribution de contact $\ker \theta$.

Nous paramétrons donc un point de $\setC H^m_+$ par sa distance $s$ au
bisecteur et sa projection $(\tau,\rho,y)$ sur celui-ci. Posons
\begin{align}
  \vartheta_1 &= \theta + \frac{\tanh \tfrac s2}{2\cosh \tfrac \rho2} d\tau , \label{eq:16}\\
  \vartheta_2 &= (1+\tanh^2 \tfrac s2) d\tau + 2 \frac{\sinh^2 \tfrac \rho2}{\cosh
    \tfrac \rho2} \tanh(\tfrac s2) \theta .\label{eq:17}
\end{align}

\begin{lemm}\label{lemm:met-demi}
  La métrique hyperbolique complexe sur $\setC H^m_+$ s'écrit en fonc­tion de
  la distance au bisecteur et de la projection sur celui-ci,
  \begin{multline}
    \label{eq:12}
    g = ds^2 + \cosh^4(\tfrac s2)\cosh^2(\tfrac \rho2) \vartheta_2^2 \\
 + \cosh^2(\tfrac s2) \Big[ d\rho^2 + \sinh^2(\rho) \vartheta_1^2 + 4 \sinh^2(\tfrac \rho2) \gamma' \Big]
  \end{multline}
\end{lemm}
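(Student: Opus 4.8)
The plan is to make the change of variables $(s,\tau,\rho,y)\mapsto(z_1,\dots,z_m)$ completely explicit by writing down the geodesics meeting the bisector $B=\{v=0\}$ orthogonally, and then to pull back the metric \eqref{eq:5}. All the difficulty sits in producing these normal geodesics in closed form; once the resulting diffeomorphism $\Phi$ is known, checking \eqref{eq:12} is a long but mechanical substitution.

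First I would use the symmetries of the configuration to reduce to a computation inside a totally geodesic $\setC H^2$. The dilations $\delta_\lambda:(z',z_m)\mapsto(\lambda z',\lambda^2 z_m)$ and the rotations $U\in U(m-1)$ acting on $z'=(z_1,\dots,z_{m-1})$ are isometries of \eqref{eq:5} preserving $B$; in the coordinates \eqref{eq:11} they act by $\tau\mapsto\tau+2\log\lambda$ (fixing $\rho,y$) and by $y\mapsto Uy$ (fixing $\tau,\rho$), and $U(m-1)$ is transitive on $S^{2m-3}$. Hence it suffices to find the normal geodesic issuing from one foot point $P_0=\Phi(0,0,\rho,y_0)$, with $y_0=(1,0,\dots,0)$, and then to transport the answer by $\delta_{e^{\tau/2}}$ and by a rotation carrying $y_0$ to $y$. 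Since $P_0$ lies in the totally geodesic $\setC H^2=\{z_2=\dots=z_{m-1}=0\}$, and the directions orthogonal to this $\setC H^2$ leave $z_m$ (hence $v$) fixed and so are tangent to $B$, the unit normal to $B$ at $P_0$ is orthogonal to them, hence tangent to $\setC H^2$; the whole normal geodesic therefore stays inside $\setC H^2$.

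Then I would run the computation in the projective model \eqref{eq:1}--\eqref{eq:2}. Writing $P_0=[\hat x]$ with $\hat x=(-\tfrac12\cosh\tfrac\rho2,\sinh\tfrac\rho2,0,\dots,0,\tfrac12\cosh\tfrac\rho2)$ (so $\langle\hat x,\hat x\rangle=-1$), a unit-speed geodesic has the form $s\mapsto[\cosh(\tfrac s2)\hat x+\sinh(\tfrac s2)\,w]$ with $\langle\hat x,w\rangle=0$ and $\langle w,w\rangle=1$. Computing the $\langle\cdot,\cdot\rangle$-dual of the conormal $dv$ and projecting onto the horizontal space $\{\langle\hat x,\cdot\rangle=0\}$ gives, up to sign and for every $\rho$, the pleasantly simple direction $w=\tfrac i2(1,0,\dots,0,1)$, which is already horizontal and of unit norm. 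Passing back to the affine chart $z_j=-x_j/x_0$ and applying $\delta_{e^{\tau/2}}$ and the rotation $y_0\mapsto y$ then yields $\Phi$ explicitly: with $C=\cosh\tfrac\rho2\cosh\tfrac s2$ and $S=\sinh\tfrac s2$ one finds $z'=e^{\tau/2}\tfrac{2\cosh(s/2)\sinh(\rho/2)}{C+iS}\,y$, $z_m=e^{\tau}\tfrac{C-iS}{C+iS}$, and in particular $f=e^{\tau}/(C^2+S^2)$.

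Finally I would substitute $\Phi$ into \eqref{eq:5}. Because the map has the shape $z'=\psi(s,\rho)e^{\tau/2}y$ with $\psi$ complex and $|y|=1$, one has $dz'=(d\psi+\tfrac12\psi\,d\tau)e^{\tau/2}y+\psi e^{\tau/2}\,dy$; the term $|dz'|^2/f$ then splits the round metric of $S^{2m-3}$ into its Hopf part $\theta^2$ (recall $\bar y\cdot dy=i\theta$) and its horizontal part $\gamma'$, and a first check is that the coefficient of $\gamma'$ comes out immediately as $4\cosh^2(\tfrac s2)\sinh^2(\tfrac\rho2)$, matching \eqref{eq:12}. The cross terms between the phase of $\psi$, the contact form $\theta$ and $d\tau$, together with the contributions of $df^2/f^2$ and $\eta^2/f^2$, are what recombine into the squared forms $\vartheta_1,\vartheta_2$ of \eqref{eq:16}--\eqref{eq:17}. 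The main obstacle is precisely to organize this last step so that the hyperbolic identities (chiefly $\cosh^2\tfrac\rho2-\sinh^2\tfrac\rho2=1$) collapse all coefficients into the announced form; a structural confirmation is that the pulled-back metric carries no cross term $ds\,d(\cdot)$, which proves a posteriori that $s$ is the geodesic distance to $B$ and that $\Phi$ is the claimed normal parametrization.
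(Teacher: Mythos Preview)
Your argument is correct and leads to the same change of variables as the paper: your formula $z'=e^{\tau/2}\tfrac{2\cosh(s/2)\sinh(\rho/2)}{C+iS}\,y$, $z_m=e^{\tau}\tfrac{C-iS}{C+iS}$ is exactly the paper's \eqref{eq:14}--\eqref{eq:15} once one writes $te^{i\alpha/2}=\tfrac{2\sinh(\rho/2)}{\cosh(\rho/2)+i\tanh(s/2)}$. Both proofs then declare the final substitution into \eqref{eq:5} a mechanical exercise.

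Where you differ is in the identification of the normal geodesics. The paper argues geometrically: invoking Goldman's structure theory of bisectors, it shows that for any $x\in\setC H^m$ the points $x$, its projection $p(x)$ on $B$, the projection $\pi(x)$ on the complex spine $\Sigma$, and the projection of $p(x)$ on the real spine $\sigma$ span a totally real plane, which is forced to be a meridian $B_{s,\beta}\cong\setR H^2$ of an auxiliary bisector; the normal geodesic thus lies in this $\setR H^2$, which is then put in a Klein model where orthogonal geodesics are vertical lines. You instead reduce by the $U(m-1)$ symmetry to the totally geodesic $\setC H^2$ and compute the unit normal $w$ directly from the Hermitian form, discovering that $w=\pm\tfrac i2(1,0,\dots,0,1)$ is independent of $\rho$. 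Your route is more self-contained and computational; the paper's route is more structural and explains \emph{a priori} why the normal geodesics organize into a family of totally real $2$-planes (the meridians), which is a fact of independent interest about bisectors.
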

Si la variable $s$ décrit $\setR$ tout entier, alors les formules
(\ref{eq:16}), (\ref{eq:17}) et (\ref{eq:12}) restent valables et
donnent un modèle de la métrique hyperbolique complexe sur $\setC H^m$
entier. Il y a aussi une inversion holomorphe de $\setC H^m$,
\begin{equation}
  \label{eq:18}
  \iota:(s,\tau)\mapsto(-s,-\tau)  ,
\end{equation}
qui échange les deux demi-espaces.

Sur le bisecteur lui-même ($s=0$), on obtient la formule classique
\begin{equation}
  \label{eq:13}
   \cosh^2(\tfrac \rho2) d\tau^2 + d\rho^2 + \sinh^2(\rho) \theta^2 + 4 \sinh^2(\tfrac \rho2) \gamma' .
\end{equation}
On retrouve l'épine $\sigma$ du bisecteur en faisant $\rho=0$, et les fibres
de la projection sur l'épine --- les niveaux de $\tau$ ---, sont des
hyperplans $\setC H^{m-1}$ totalement géodésiques dans $\setC
H^m$. L'hyperplan $\tau^{-1}(0)$, qui coupe la géodésique (\ref{eq:19}),
est le lieu fixe de l'inversion $\iota$.

Donnons le changement de coordonnées sur $\setC H^m$, prolongeant
(\ref{eq:11}), et permettant de passer à cette formule. On écrit
\begin{equation}
  \label{eq:14}
  \big((z_1,\dots,z_{m-1}),z_m\big) = \big(e^{\frac{\tau+i\alpha}2}ty, e^{\tau+i\alpha}\big) , 
\end{equation}
où $-\tfrac \pi2 < \alpha < \tfrac \pi2$ et $0 \leq t < 2 \sqrt{\cos \alpha}$. Le
changement de variables est alors donné par
\begin{equation}
  \label{eq:15}
  te^{i\frac \alpha2} = \frac{2\sinh \tfrac \rho2}{\cosh \tfrac \rho2+i \tanh \frac s2} .
\end{equation}

Enfin, la structure complexe de $\setC H^m$ est donnée par
\begin{equation}
  \label{eq:4}
  Jdr = \cosh^2(\tfrac s2)\cosh(\tfrac \rho2)\vartheta_2, \quad
  Jd\rho = \sinh(\rho)\vartheta_1 .
\end{equation}

Finalement, observons que la formule (\ref{eq:12}) a un comportement
un peu curieux quand $\rho\to+\infty$, puisqu'alors $\vartheta_1\to\theta$ et $\vartheta_2\sim2\sinh(\frac
\rho2)\theta$, donc la base choisie pour écrire la métrique dégénère. En
remplaçant $\vartheta_1$ par $\vartheta_1-\frac{\cosh \frac \rho2}{2 \sinh^2(\frac
  \rho2)\tanh(\frac s2)} \vartheta_2 \sim \frac{d\tau}{2\cosh \frac \rho2}$ quand $s$ et $\rho$
  tendent vers l'infini, on obtient
  \begin{equation}
    \label{eq:54}
    \begin{split}
      g = ds^2 &+ \cosh^4(\tfrac s2)\cosh^2(\tfrac \rho2) \vartheta_2^2 \\ &+
      \cosh^2(\tfrac s2) \Big[ d\rho^2 + \sinh^2(\tfrac \rho2) (d\tau^2 + 4 \gamma')
      \Big] + O(\frac 1{\cosh^2\tfrac s2}) .
    \end{split}
  \end{equation}
En particulier, cette formule nous dit que, quand $\rho\to+\infty$, les
directions de contact explosent en $\cosh \frac s2$, contrairement à
ce qu'une lecture trop rapide de (\ref{eq:12}) eut indiqué.

{\flushleft\emph{Preuve du lemme \ref{lemm:met-demi}}}. -- Par simplicité, on se restreint au cas de
  la dimension $m=2$, l'extension en dimension supérieure étant
  évidente. Le point crucial est de calculer les géodésiques
  orthogonales au bisecteur. Par rapport aux notations précédentes,
  on posera $y=e^{i\beta}$ puisque $y$ est une variable circulaire si
  $m=2$. Considérons le bisecteur
  \begin{equation}
    \label{eq:20}
    B_s = \{ (\sqrt{s} t e^{i\beta+i\tfrac \alpha2}, se^{i\alpha}), |t|<2\sqrt{\cos
      \alpha} \} . 
  \end{equation}
  Ses méridiens $B_{s,\beta}$ s'obtiennent en fixant $\beta$, et coupent $B$
  en
  \begin{equation}
    \label{eq:21}
    \gamma_{s,\beta} := B\cap B_{s,\beta} = \{ (\sqrt s t e^{i\beta}, s), |t|<2 \} 
  \end{equation}
  qui est une géodésique orthogonale à l'épine $\sigma$ au point
  $(0,s)$. 

\smallskip

{\flushleft\emph{Affirmation}}. Les géodésiques du plan méridien $B_{s,\beta}$ qui
  sont orthogonales à $\gamma$ sont en réalité orthogonales à $B$ tout
  entier. 

\smallskip

  En effet, d'après \cite{Gol99}, si $x$ est un point de l'espace
  hyperbolique, alors sa projection $p(x)=(\sqrt s e^{i\beta} t,r)$ sur
  $B$, sa projection $\pi(x)$ sur l'épine complexe $\Sigma$, et enfin la
  projection $\xi=\pi(p(x))=(0,r)$ de $p(x)$ sur l'épine $\sigma$ engendrent
  un plan totalement réel. Ce plan, coupant le bisecteur $B$ en la
  géodésique $\gamma_{s,\beta}$, et l'épine complexe $\Sigma$ en la géodésique
  orthogonale à $\sigma$ en $\xi$, ne peut être que le méridien $B_{s,\beta}$. Il
  contient donc la géodésique reliant $x$ à $p(x)$.

Chaque plan méridien $B_{s,\beta}$ est muni de coordonnées $(t,\alpha)$. Pour
l'identi­fier à un plan hyperbolique réel, on se ramène à $B_{1,0}$
par l'isométrie 
$$(z_1,z_2)\to(\sqrt{s}e^{i\beta}z_1,s).$$ 
Revenant aux coordonnées homogènes (\ref{eq:1}), on voit que
$$ B_{1,0} = \{[-e^{-i\frac \alpha2}:t:e^{i\frac \alpha2}]\} = \{[-u:1:\bar u]\},$$
avec $u=(te^{i\frac \alpha2})^{-1}$, satisfaisant $\Re u^2>\frac 14$.  C'est
un modèle de Klein du plan hyperbolique réel, dans lequel les
géodésiques orthogonales à $\gamma_{1,0}=\{ u\in \setR\}$ sont des droites
verticales, qu'on paramètre par
$$ u = \frac{\cosh \tfrac \rho2 + i \tanh \tfrac s2}{2\sinh \tfrac \rho2}. $$
Ici, $\rho$ est l'abscisse curviligne sur la géodésique $\gamma_{1,0}$, et $s$
la distance à $\gamma_{1,0}$. Cela donne les coordonnées voulues. Le reste
de la démonstration du lemme consiste en le calcul de la métrique, que
nous ne reproduisons pas ici. \qed

Un bisecteur est une hypersurface minimale. Le lemme suivant montre
que la seconde forme fondamentale de toutes les hypersurfaces
équidistantes d'un bisecteur est uniformément bornée.
\begin{lemm}
  \label{lemm:uniform}
  Les hypersurfaces $s=\mathrm{cst}$, à distance constante du
  bisecteur, ont une seconde forme fondamentale uniformément bornée
  indépendamment de $s$. Il en est de même de chaque dérivée
  covariante de la seconde forme fondamentale.
\end{lemm}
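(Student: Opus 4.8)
L'id\'ee est d'exploiter la forme explicite \eqref{eq:12} de la m\'etrique, qui est du type $g = ds^2 + g_s$ o\`u $g_s$ d\'esigne la famille des m\'etriques induites sur les hypersurfaces $\{s=\mathrm{cst}\}$. Le champ $\partial_s$ en est le champ normal unitaire, les courbes $s\mapsto(s,\tau,\rho,y)$ \'etant les g\'eod\'esiques orthogonales au bisecteur construites dans la preuve du lemme \ref{lemm:met-demi}~; la seconde forme fondamentale vaut donc $\mathrm{II}=\tfrac12\partial_s g_s$ et l'op\'erateur de Weingarten $A=g_s^{-1}\mathrm{II}$. On travaillera dans le corep\`ere orthonormal pour $g_s$ adapt\'e \`a la structure complexe~: d'apr\`es \eqref{eq:4}, la direction $J\partial_s$ est celle de $\vartheta_2$ et $Jd\rho$ celle de $\vartheta_1$, de sorte qu'en posant $\omega^1=\cosh^2(\tfrac s2)\cosh(\tfrac\rho2)\vartheta_2$, $\omega^2=\cosh(\tfrac s2)d\rho$, $\omega^3=\cosh(\tfrac s2)\sinh(\rho)\vartheta_1=J\omega^2$ et $\omega^a=2\cosh(\tfrac s2)\sinh(\tfrac\rho2)\mu^a$ (pour $\mu^a$ un corep\`ere orthonormal de $\gamma'$), la m\'etrique $g_s$ est diagonale et l'espace tangent \`a l'hypersurface se scinde en $\setR J\partial_s\oplus\cH$, o\`u $\cH=\ker\eta$ est la distribution de contact $J$-invariante. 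Il s'agit de borner $\|A\|_{g_s}$ et les $\|\nabla^k\mathrm{II}\|_{g_s}$ uniform\'ement.

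Le point structurel qui rend l'\'enonc\'e abordable est l'homog\'en\'eit\'e. Le sous-groupe $G_0\cong\setR\times U(m-1)$ de $PU(m,1)$ engendr\'e par les dilatations $(z_1,\dots,z_{m-1},z_m)\mapsto(e^{a/2}z_1,\dots,e^{a/2}z_{m-1},e^a z_m)$, qui agissent par $\tau\mapsto\tau+a$, et par les rotations unitaires de la variable $y\in S^{2m-3}$, agit par isom\'etries de $\setC H^m$ en pr\'eservant le bisecteur $B$ et l'\'epine $\sigma$, donc \`a la fois la distance $s$ et la coordonn\'ee $\rho$~; de plus il agit transitivement sur chaque orbite $\{s=\mathrm{cst},\ \rho=\mathrm{cst}\}\cong\setR_\tau\times S^{2m-3}_y$ (les isom\'etries de la sph\`ere pr\'eservant $\theta$ et $\gamma'$ agissant transitivement sur $S^{2m-3}$). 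Comme $A$ et tous les $\nabla^k\mathrm{II}$ sont des champs de tenseurs canoniquement attach\'es \`a la m\'etrique et \`a l'hypersurface, ils sont $G_0$-invariants, et leurs normes ponctuelles ne d\'ependent que de $(s,\rho)\in\setR\times\setR_+^*$. La bornitude uniforme se ram\`ene ainsi \`a v\'erifier que ces fonctions de $(s,\rho)$ restent born\'ees, ce qui, par continuit\'e, ne requiert que le contr\^ole des limites $s\to\pm\infty$, $\rho\to0^+$ et $\rho\to+\infty$.

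Pour ces limites, on combine le calcul explicite et la structure d'espace sym\'etrique. Les directions $\omega^2$ et $\omega^a$ fournissent imm\'ediatement des valeurs propres $\tfrac12\tanh(\tfrac s2)$ de $A$, born\'ees par $\tfrac12$ et ind\'ependantes de $\rho$. Pour le bloc $(\omega^1,\omega^3)$, dont le corep\`ere d\'epend de $s$ via $\vartheta_1,\vartheta_2$, on invoque l'\'equation de Riccati $\partial_s A+A^2+\cR=0$ le long des g\'eod\'esiques normales~: $\setC H^m$ \'etant sym\'etrique, l'op\'erateur de courbure $\cR:X\mapsto R(X,\partial_s)\partial_s$ est parall\`ele, de valeurs propres $-1$ sur $\setR J\partial_s$ et $-\tfrac14$ sur $\cH$, et l'unique solution born\'ee de cette \'equation converge, quand $s\to+\infty$, vers le point fixe stable, de valeurs propres $1$ sur $J\partial_s$ et $\tfrac12$ sur $\cH$ (et sym\'etriquement lorsque $s\to-\infty$)~; cela borne $A$. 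Les d\'eriv\'ees covariantes se traitent de m\^eme~: $\partial_s$ \'etant g\'eod\'esique, $\nabla_{\partial_s}A=\partial_s A=-A^2-\cR$ tend vers $0$, tandis que les d\'eriv\'ees tangentielles sont gouvern\'ees, via les \'equations de Gauss--Codazzi, par $A$, par la courbure parall\`ele $\cR$ et par $\nabla R=0$~; une r\'ecurrence sur l'ordre de d\'erivation fournit des limites finies. Quand $\rho\to+\infty$ on se sert de la forme r\'e\'ecrite \eqref{eq:54}, et quand $\rho\to0^+$ (approche de l'\'epine) la d\'eg\'en\'erescence n'est qu'un artefact des coordonn\'ees polaires, si bien que tous les tenseurs se prolongent lissement.

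Le principal obstacle sera le bloc de Reeb--contact $(\omega^1,\omega^3)$ associ\'e \`a $\vartheta_1,\vartheta_2$~: son corep\`ere orthonormal d\'epend de $s$, de sorte que $\partial_s g_s$ y fait appara\^itre un couplage hors diagonale qu'un calcul direct rendrait p\'enible, et c'est pr\'ecis\'ement l\`a que le recours \`a l'\'equation de Riccati et au parall\'elisme de la courbure remplace avantageusement la d\'erivation du corep\`ere mobile. Le second point d\'elicat est la v\'erification du prolongement lisse en $\rho=0$, que l'on obtient le plus proprement en revenant aux coordonn\'ees affines $(z_1,\dots,z_m)$ au voisinage de l'\'epine, o\`u la g\'eom\'etrie est manifestement r\'eguli\`ere.
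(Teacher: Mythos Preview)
Your approach is quite different from the paper's. The paper carries out the brute-force computation: writing $g=ds^2+g_s$ and $\II=-\tfrac12\partial_s g_s$, it gives $g_s^{-1}\II$ explicitly, obtaining the eigenvalue $-\tfrac12\tanh\tfrac s2$ on $\partial_\rho$ and on $\ker\theta\subset TS^{2m-3}$, and an explicit $2\times2$ matrix in the orthonormal basis $(\cosh^2(\tfrac s2)\cosh(\tfrac\rho2)\vartheta_2,\ \cosh(\tfrac s2)\sinh(\rho)\vartheta_1)$; boundedness (and that of the derivatives) is then read off these closed formulas. Your route---homogeneity under $\setR\times U(m-1)$ to reduce to a function of $(s,\rho)$, direct computation on the easy directions, and the Riccati equation on the $(\vartheta_2,\vartheta_1)$-block---is more conceptual and would transpose to other rank-one symmetric spaces, whereas the paper's explicit formulas have the advantage of immediately yielding the mean curvature \eqref{eq:22} used just afterwards.

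That said, several steps of your argument are not justified as written. First, the phrase ``l'unique solution born\'ee'' is incorrect: the scalar Riccati $a'+a^2-c^2=0$ has a whole one-parameter family $c\tanh(cs+\phi)$ of bounded solutions. What you need is that every solution defined on all of $\setR$ (which ours is, by the absence of focal points in negative curvature) remains bounded and tends to $\sqrt{-\cR}$; this is true but should be stated correctly. Second, you never check that the initial condition $A(0)$---the second fundamental form of the bisector itself---is bounded in $\rho$; without this, your reduction ``par continuit\'e, ne requiert que le contr\^ole des limites'' is illegitimate, since a continuous function on $\setR\times\setR_+^*$ with finite limits at each end need not be bounded absent uniformity. (The paper's explicit matrix at $s=0$ shows $A(0)$ is purely off-diagonal with entries $\sim\tanh\tfrac\rho2$, hence bounded, but you do not establish this.) Third, and most seriously, your treatment of the covariant derivatives is incomplete: the Codazzi equation $(\nabla^\Sigma_X A)Y-(\nabla^\Sigma_Y A)X=(R(X,Y)\partial_s)^T$ controls only the part of $\nabla^\Sigma A$ skew in its first two arguments, not the full tangential derivative, so Gauss--Codazzi together with $\nabla R=0$ do not by themselves bound $\nabla^\Sigma A$, let alone higher derivatives. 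The paper sidesteps all of this by simply differentiating its closed formulas.
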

\begin{proof}
  À partir du lemme \ref{lemm:met-demi}, il s'agit d'un calcul
  direct. On écrit $g_h=ds^2+g_s$, alors la seconde forme fondamentale
  est $\II=-\frac12 \partial_sg_s$. On obtient pour $g_s^{-1}\II$ :
  \begin{itemize}
  \item sur $\frac \partial{\partial\rho}$ et sur $\ker \theta\subset TS^{2m-3}$, une valeur propre
    $-\frac 12\tanh \frac s2$ ;
  \item dans la base orthonormale $(\cosh^2(\frac s2)\cosh(\frac
    \rho2)\vartheta_2,\cosh(\frac s2)\sinh(\rho)\vartheta_1)$, une matrice
    \begin{multline*}
      \frac {-1}{2(\cosh^2 \tfrac \rho2 +\tanh^2 \tfrac s2 )}× \\
      \begin{pmatrix}
        \tanh(\tfrac s2) \big(4+\sinh^2(\tfrac \rho2) (3-\tanh^2 \tfrac s2)\big) & 
        \frac{\sinh \rho}{\cosh^3 \tfrac s2} \\
        \frac{\sinh \rho}{\cosh^3 \tfrac s2} & 
        \tanh(\tfrac s2) (1+\tanh^2 \tfrac s2 \cosh^2 \tfrac \rho2)
      \end{pmatrix}
    \end{multline*}
  \end{itemize}
  Le lemme se déduit de ces formules.
\end{proof}

Remarquons qu'on peut déduire du lemme la courbure moyenne des
hypersurfaces, à savoir
\begin{equation}
  \label{eq:22}
  H = \Tr(g_s^{-1}\II)
= - \tanh(\tfrac s2)\big(m+\frac 2{\cosh^2(\tfrac s2)(\cosh^2 \tfrac \rho2+\tanh^2 \tfrac s2)}\big) . 
\end{equation}
On retrouve ainsi que le bisecteur $s=0$ est minimal, mais les autres
hypersurfaces équidistantes du bisecteur ne sont plus minimales ou à
courbure moyenne constante. Il serait intéressant de trouver le
feuilletage de $\setC H^m_+$ par des hypersurfaces à courbure moyenne
constante.

\section{Analyse sur un demi-espace}
\label{sec:deux}
Dans cette section, on va donner quelques outils simples d'analyse sur
un demi-espace hyperbolique réel ou complexe. Il y a deux bords sur le
demi-espace $\setR H^n_+$ ou $\setC H^m_+$, quand la distance $s$ au bord
intérieur tend vers $0$, ou quand on va à l'infini sur chaque tranche
$s=\mathrm{cst}$. Cela motive l'introduction d'espaces fonctionnels
avec deux poids.

Dans le cas réel, le plus simple, chaque tranche est un espace
hyperbolique réel $\setR H^{n-1}$, sur lequel on choisit des coordonnées
polaires, de sorte que la formule (\ref{eq:23}) devient
\begin{equation}
  \label{eq:24}
  g_h = ds^2 + \cosh^2(s) \big( d\rho^2 + \sinh^2(\rho) g_{S^{n-2}}\big) .
\end{equation}
Il est naturel de considérer la fonction poids
\begin{equation}
  \label{eq:25}
  w = \cosh(s)^{\delta_1} \cosh(\rho)^{\delta_2} ,
\end{equation}
où $\delta_1$ et $\delta_2$ sont deux réels fixés, et de définir pour $k\in\setN$ et $\alpha\in ]0,1[$ les espaces
fonctionnels
\begin{equation}
  \label{eq:26}
  C^k_{\delta_1,\delta_2}(\setR H^n) = \frac 1w C^{k}(\setR H^n), \quad 
C^{k,\alpha}_{\delta_1,\delta_2}(\setR H^n) = \frac 1w C^{k,\alpha}(\setR H^n) .
\end{equation}

Dans le cas complexe, au vu de la formule (\ref{eq:15}), la fonction
$f=\Re(z_m)-\frac 14 \sum |z_i|^2$, qui définit le bord de $\setC H^m$, s'écrit
\begin{equation}
  \label{eq:28}
  f = \frac{e^{\tfrac \tau2}}{\cosh^2(\tfrac s2)(\cosh^2\tfrac
    \rho2+\tanh^2\tfrac s2)}, 
\end{equation}
et il est ainsi naturel de choisir un poids
\begin{equation}
  \label{eq:27}
  w = \cosh(\tfrac s2)^{2\delta_1} \big( \cosh^2(\tfrac \rho2) \cosh(\tfrac \tau2) \big)^{\delta_2} 
\end{equation}
pour définir, de la même manière, les espaces à poids
\begin{equation}
  \label{eq:29}
C^{k}_{\delta_1,\delta_2}(\setC H^m) = \frac 1w C^{k}(\setC H^m) ,\quad  
C^{k,\alpha}_{\delta_1,\delta_2}(\setC H^m) = \frac 1w C^{k,\alpha}(\setC H^m) .
\end{equation}

Une propriété importante des poids choisis, dans les deux cas, est
leur invariance par inversion,
\begin{equation}
  \label{eq:30}
  \iota^*w = w .
\end{equation}

\begin{lemm}\label{lemm:est-poids}
  Supposons que
  \begin{itemize}
  \item dans le cas réel, $0<\delta_1<n-1$ et $0\leq \delta_2\leq n-2$ ;
  \item dans le cas complexe, $0<\delta_1<m$ et $0\leq \delta_2\leq m-\frac 12$
(et $\delta_2\leq\frac 54$ si $m=2$).
  \end{itemize}
  Alors il existe une constante $c(\delta_1)>0$, dépendant de $\delta_1$
  seulement, telle que
  \begin{equation}
 - \Delta\log w - |d\log w|^2 > c(\delta_1) .\label{eq:31}
\end{equation}

\end{lemm}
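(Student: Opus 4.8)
**The plan is to verify (\ref{eq:31}) by a direct computation in each of the two cases, reducing everything to the asymptotic behaviour of the weight $w$ in the interior region and on each slice.** The key observation is that $\log w$ is a sum of terms, each depending on a single one of the variables $s$, $\rho$ (and $\tau$ in the complex case), so both the Laplacian $\Delta\log w$ and the gradient norm $|d\log w|^2$ split as sums of one-variable contributions that can be estimated separately. First I would write $\log w$ explicitly from (\ref{eq:25}) in the real case (respectively (\ref{eq:27}) in the complex case); in the real case this is $\delta_1\log\cosh(s)+\delta_2\log\cosh(\rho)$, and the metric (\ref{eq:24}) is a warped product with warping factor $\cosh(s)$ in the $s$-direction and $\sinh(\rho)$ on the sphere, so the volume density and hence the Laplacian are completely explicit.

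\textbf{Next I would compute the two pieces.} For the $\delta_1\log\cosh(s)$ part, using that the $s$-slices carry the warping $\cosh^{n-1}(s)$ in the volume form, the radial Laplacian gives a term of the form $\delta_1\bigl((n-1)-\tanh^2(s)\bigr)$-type expression minus the gradient square $\delta_1^2\tanh^2(s)$; combining, the net $s$-contribution to $-\Delta\log w-|d\log w|^2$ is $\delta_1(n-1)-\delta_1(\delta_1+?)\tanh^2(s)$, which under the hypothesis $0<\delta_1<n-1$ is bounded below by a positive constant $c(\delta_1)$ uniformly in $s$ (the worst case being $s\to\infty$, where $\tanh^2(s)\to1$ and the bound becomes $\delta_1(n-1-\delta_1)>0$). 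The $\delta_2\log\cosh(\rho)$ part, after dividing by the warping factor $\cosh^2(s)$ coming from the slice metric, contributes a \emph{nonnegative} term on each slice precisely when $0\le\delta_2\le n-2$, since the one-dimensional expression $-\Delta_{\mathbb{H}^{n-1}}\log\cosh(\rho)-|d\log\cosh(\rho)|^2$ on hyperbolic space is itself bounded below by $0$ in that range and gets an extra favourable $1/\cosh^2(s)$ factor. Adding the two pieces yields (\ref{eq:31}) with $c$ depending only on $\delta_1$.

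\textbf{The complex case proceeds identically but with heavier bookkeeping.} Here I would use the explicit metric (\ref{eq:12}) from Lemma \ref{lemm:met-demi} to read off the volume density in the coordinates $(s,\rho,\tau,y)$, and use (\ref{eq:28}) to motivate the three factors of $w$. The $\cosh(s/2)^{2\delta_1}$ factor plays the role of the $s$-direction and produces the positive lower bound $c(\delta_1)$ under $0<\delta_1<m$; the factors $\cosh^2(\rho/2)^{\delta_2}$ and $\cosh(\tau/2)^{\delta_2}$ produce the nonnegative slice contributions under $0\le\delta_2\le m-\tfrac12$. The slightly more restrictive constraint $\delta_2\le\tfrac54$ when $m=2$ reflects that in low dimension the cross-terms between the $\rho$- and $\tau$-directions (which are coupled through $\vartheta_1,\vartheta_2$ in (\ref{eq:16})--(\ref{eq:17})) no longer have enough room to stay nonnegative, so one must check this borderline inequality by hand.

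\textbf{I expect the main obstacle to be the complex-case cross-terms.} Because $\vartheta_1$ and $\vartheta_2$ mix $d\tau$, $\theta$, and hence the $\rho$- and $\tau$-dependence, the Laplacian of $\log w$ does not split as cleanly as in the real case, and one must verify that the genuinely coupled contributions are dominated by the favourable $1/\cosh^2(s/2)$ decay or absorbed into the positive $\delta_1$-term. Controlling these mixed terms uniformly in $(s,\rho,\tau)$, and in particular pinning down the exact endpoint conditions on $\delta_2$ (including the $m=2$ exception), is the delicate part; everything else is a matter of organizing the one-variable estimates and adding them up.
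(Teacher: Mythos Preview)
Your approach is essentially the one the paper takes: write $\log w=u_1+u_2$ (real case) or $u_1+u_2+u_3$ (complex case), use orthogonality of the differentials to split $-\Delta u-|du|^2$ into one-variable pieces, and check that the $u_1$-piece gives a uniform positive lower bound $c(\delta_1)$ while the remaining pieces are nonnegative under the stated constraints on $\delta_2$.

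One clarification on the complex case: the difficulty you anticipate from the $\vartheta_1,\vartheta_2$ mixing is slightly misplaced. The paper observes that $ds$, $d\rho$ and $d\tau$ are mutually orthogonal for the metric (\ref{eq:12}), and moreover $\Delta\tau=0$; so there are \emph{no} cross-terms in $|d\log w|^2$, and the splitting into $u_1,u_2,u_3$ is clean at the level of the functional. The genuine coupling appears only through the volume form: each $-\Delta u_i-|du_i|^2$ is a function of both $p=\tanh^2\tfrac s2$ and $\varpi=\tanh^2\tfrac\rho2$, not of a single variable. The paper handles this by writing everything explicitly in $(p,\varpi)\in[0,1)^2$, bounding the $u_3$-contribution from below by $-\tfrac{\delta_2^2(1-\varpi)}{4\cosh^2\tfrac s2}$, and then checking that $-\Delta(u_2+u_3)-|d(u_2+u_3)|^2\ge0$ under $0\le\delta_2\le m-\tfrac12$ (with the $\delta_2\le\tfrac54$ restriction when $m=2$). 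So your plan is right, but the bookkeeping is easier than you expect on the gradient side and concentrated entirely in the Laplacian.
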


Dans ce lemme, l'intervalle sur $\delta_1$ est optimal, mais on n'a pas
essayé d'obtenir un intervalle optimal pour le poids $\delta_2$. La
démonstration montrera que, pour chaque valeur de $\delta_1$, on peut
admettre des valeurs de $\delta_2$ qui s'écartent de l'intervalle indiqué
dans le lemme.

\begin{proof}
  Commençons par le cas réel. Écrivons $w=e^u$, où 
  $$ u=\delta_1\log\cosh s+\delta_2\log\cosh \rho=:u_1+u_2.$$ 
  Il faut calculer $-\Delta u-|du|^2$. Puisque
  $ds$ et $d\rho$ sont orthogonaux, on a
  $$ -\Delta u-|du|^2 = -\Delta u_1-|du_1|^2 -\Delta u_2-|du_2|^2 , $$
  et on calcule :
  \begin{align*}
    -\Delta u_1-|du_1|^2 &= \delta_1 \big( 1+(n-2-\delta_1)\tanh^2 s \big) , \\
    -\Delta u_2-|du_2|^2 &= \frac{\delta_2}{\cosh^2 s} \big( 1+(n-3-\delta_2)\tanh^2 \rho \big).
  \end{align*}
  Le lemme s'en déduit immédiatement.

  Le cas complexe est plus compliqué, mais heureusement $ds$, $d\rho$ et
  $d\tau$ sont orthogonaux, et en outre $\Delta\tau=0$. Cela simplifie les
  calculs : on pose $u_1=2\delta_1\log \cosh \frac s2$, $u_2=2\delta_2\log\cosh\frac
  \rho2$ et $u_3=\delta_2\log\cosh\frac \tau2$, alors, si on pose $p=\tanh^2 \frac
  s2$ et $\varpi=\tanh^2 \frac \rho2$, on obtient :
  \begin{align*}
    -\Delta u_1-|du_1|^2 &= \delta_1 \left(
  \tfrac 12 + (m-\tfrac 12-\delta_1)p + \frac{p(1-p)(1-\varpi)}{1+p(1-\varpi)}
                         \right),\\
    -\Delta u_2-|du_2|^2 &= \frac{\delta_2}{\cosh^2\tfrac s2} \left(
  m-1+(\tfrac 12-\delta_2)\varpi -\frac{p\varpi(1-\varpi)}{1+p(1-\varpi)}     \right),\\
    -\Delta u_3-|du_3|^2 &= -\frac{\delta_2^2}{4\cosh^2\tfrac s2}
                      \frac{(1-\varpi)(1-p(1-\varpi))}{(1+p(1-\varpi))^2}.
  \end{align*}
  Puisque $p$ et $\varpi$ prennent leurs valeurs dans $[0,1[$, la formule
  sur $u_1$ donne immédiatement la condition sur le poids $\delta_1$. De
  l'équation sur $u_3$ nous déduisons
$$ -\Delta u_3-|du_3|^2 \geq -\frac{\delta_2^2(1-\varpi)}{4\cosh^2\tfrac r2} , $$
  d'où résulte, posant $v=u_2+u_3$,
  \begin{align*}
    -\Delta v-|dv|^2 &\geq \frac {\delta_2}{\cosh^2\tfrac r2}
    \Big( m-1+(\tfrac 12-\delta_2)\varpi-\varpi(1-\varpi)-\tfrac 14 \delta_2(1-\varpi) \Big) \\
               &\geq \frac {\delta_2}{\cosh^2\tfrac r2}
    \Big( (m-\tfrac 12-\delta_2) - (1-\varpi)(\varpi+\tfrac 12-\tfrac 34 \delta_2) \Big)
      \end{align*}
Si $m\geq 3$ et $0\leq \delta_2\leq m-\frac 12$, cette quantité est positive ; si
$m=2$, elle est positive en se restreignant à $\delta_2\leq \frac 54$ (nous n'avons
fait ici aucun effort pour obtenir la meilleure borne).
\end{proof}

\begin{rema}
  Posons $\cH=n-1$ dans le cas réel, $\cH=m$ dans le cas complexe,
  donc $\cH$ est, au signe près, la limite de la courbure moyenne des
  hypersurfaces de niveau de $s$ quand $s$ tend vers l'infini. On
  observera que dans chaque cas, on a
  \begin{equation}
    \label{eq:37}
    \lim_{r\to\infty} -\Delta\log w-|d\log w|^2 = \delta_1(\cH-\delta_1) .
  \end{equation}
  En revanche, la constante $c(\delta_1)$ ne peut pas être prise égale à
  cette limite.
\end{rema}

L'existence d'un poids vérifiant l'inégalité différentielle
(\ref{eq:31}) permet de déduire immédiatement le comportement du
laplacien dans les espaces à poids $C^{k,\alpha}_{\delta_1,\delta_2}$. En effet,
observons que, pour toute fonction $f$, on a l'identité
\begin{equation}
  \label{eq:32}
  w \Delta f = \Delta(wf) + (-\Delta\log w-|d\log w|^2) wf + 2\langle d\log w,d(wf)\rangle .
\end{equation}
Nous noterons maintenant génériquement $H$ l'espace hyperbolique tout entier (qui est donc, selon 
le cas considéré, soit l'espace hyperbolique réel $\mathbb{R}H^n$ soit l'espace hyperbolique complexe 
$\mathbb{C}H^m$) et $H_+$ le demi-espace avec la métrique calculée dans la section précédente.
Si une fonction $f$ est globalement définie sur $H$, ou bien si
$f$ est définie seulement sur un demi-espace hyperbolique $H_+$ et satisfait
la condition de Dirichlet $f=0$ sur le bord du demi-espace, et si les
poids $\delta_1$ et $\delta_2$ satisfont les conditions du lemme
\ref{lemm:est-poids}, alors, par principe du maximum,
\begin{equation}
  \label{eq:33}
  \sup |wf| \leq \frac 1{c(\delta_1)} \sup |w\Delta f| .
\end{equation}
Par régularité elliptique, on en déduit immédiatement le lemme suivant.
\begin{lemm}\label{lemm:laplacien-scalaire}
  Supposons que les poids $\delta_1$ et $\delta_2$ satisfassent les conditions
  du lemme \ref{lemm:est-poids}. Alors, si $k\geq 2$,
  \begin{enumerate}\item 
    le laplacien, avec condition de Dirichlet, est un isomorphisme
    $$ ^0C^{k,\alpha}_{\delta_1,\delta_2}(H_+) \to C^{k-2,\alpha}_{\delta_1,\delta_2}(H_+),$$ 
    où $ ^0C$ dénote l'espace avec condition de Dirichlet sur le bord intérieur ;
  \item le laplacien sur $H$ entier est un isomorphisme
   $$C^{k,\alpha}_{\delta_1,\delta_2}(H) \to C^{k-2,\alpha}_{\delta_1,\delta_2}(H).$$
  \end{enumerate}
Pour $\lambda\geq 0$, les mêmes résultats restent vrais pour l'opérateur $\Delta+\lambda$,
sous la même condition sur $\delta_2$, et pourvu que 
\begin{equation}
\delta_1\in\left]\tfrac{\cH}2-\sqrt{\tfrac{\cH^2}4+\lambda},\tfrac{\cH}2+\sqrt{\tfrac{\cH^2}4+\lambda}\right[.\label{eq:39}
\end{equation}
\end{lemm}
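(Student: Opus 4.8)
The plan is to promote the scalar maximum-principle bound (\ref{eq:33}) to a full weighted Schauder estimate, from which injectivity, closed range and then surjectivity follow.

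First I would combine (\ref{eq:33}) with interior elliptic estimates. Both $H$ and $H_+$ have bounded geometry: the ambient sectional curvature is pinched, and by Lemma \ref{lemm:uniform} the hypersurfaces $s=\mathrm{cst}$ have uniformly bounded second fundamental form together with all covariant derivatives. Hence Schauder estimates hold on each unit ball $B(x,1)$ with a constant independent of the center $x$. Moreover $|d\log w|$ is bounded --- in the real case $|d\log w|^2=\delta_1^2\tanh^2 s+\delta_2^2\tanh^2\rho/\cosh^2 s$, and similarly in the complex case --- so $w$ varies by a bounded factor on each unit ball. Multiplying the local estimate by $w(x)$ and taking the supremum over $x$ gives
\[
\|f\|_{C^{k,\alpha}_{\delta_1,\delta_2}}\leq C\big(\|\Delta f\|_{C^{k-2,\alpha}_{\delta_1,\delta_2}}+\|f\|_{C^0_{\delta_1,\delta_2}}\big),
\]
and absorbing the last term through (\ref{eq:33}) yields
\[
\|f\|_{C^{k,\alpha}_{\delta_1,\delta_2}}\leq C'\,\|\Delta f\|_{C^{k-2,\alpha}_{\delta_1,\delta_2}}
\]
for every $f$ (with $f=0$ on the inner boundary in the $H_+$ case). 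This gives injectivity and closed range at once.

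For surjectivity I would use a compact exhaustion, the decisive point being that the constant $c(\delta_1)$ produced by Lemma \ref{lemm:est-poids} is \emph{independent of the domain}. Given $g\in C^{k-2,\alpha}_{\delta_1,\delta_2}$, solve $\Delta f_R=g$ with Dirichlet data on an exhausting family of smooth compact domains $\Omega_R\uparrow H$ (resp. $\Omega_R\uparrow H_+$), keeping in the half-space case the prescribed condition on the inner boundary; this is solvable since the Dirichlet Laplacian is invertible on a compact manifold with boundary. The estimate (\ref{eq:33}), applied on $\Omega_R$, gives $\sup_{\Omega_R}|wf_R|\leq c(\delta_1)^{-1}\sup_H|wg|$ uniformly in $R$, and the weighted Schauder estimate then bounds $f_R$ in $C^{k,\alpha}_{\delta_1,\delta_2}$ uniformly. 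By Arzel\`a--Ascoli a subsequence converges in $C^{k,\alpha}_{\mathrm{loc}}$ to a solution $f$ of $\Delta f=g$, and the uniform weighted bound passes to the limit, so $f\in C^{k,\alpha}_{\delta_1,\delta_2}$. Conceptually, the symmetry of the admissible range about $\cH/2$ reflects the self-adjointness of $\Delta$ under the conjugate weight $\cH-\delta_1$, which lies in the same interval, so the dual problem is equally well-posed.

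For $\Delta+\lambda$ with $\lambda\geq 0$ I would simply revisit Lemma \ref{lemm:est-poids}: the identity (\ref{eq:32}) acquires a term $\lambda\,wf$, so the relevant zeroth-order coefficient becomes $-\Delta\log w-|d\log w|^2+\lambda$, whose limit (\ref{eq:37}) is shifted from $\delta_1(\cH-\delta_1)$ to $\delta_1(\cH-\delta_1)+\lambda$; positivity of the quadratic $-\delta_1^2+\cH\delta_1+\lambda$ is exactly the condition (\ref{eq:39}), while the $\delta_2$-dependent contributions all carry the decaying factor $\cosh^{-2}(s/2)$ (resp. $\cosh^{-2}s$) and are untouched, so the constraint on $\delta_2$ is unchanged. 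Once the differential inequality is re-established, the two steps above apply verbatim. I expect the genuine obstacle to be surjectivity rather than the estimate: one must verify that $c(\delta_1)$ is truly uniform over the exhausting domains and that the double weight controls the limit \emph{globally}, in particular near the corner at infinity where the two weight factors interact, which is exactly what the two-weight spaces were introduced to handle.
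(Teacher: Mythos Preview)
Your proposal is correct and follows the same route as the paper: the maximum-principle bound (\ref{eq:33}) combined with local elliptic regularity gives the weighted Schauder estimate, and the case of $\Delta+\lambda$ is handled by observing that the zeroth-order coefficient in (\ref{eq:32}) becomes $-\Delta\log w-|d\log w|^2+\lambda$, which remains uniformly positive under~(\ref{eq:39}). The paper is terser than you are --- it dispatches the whole first part with the single phrase ``par r\'egularit\'e elliptique, on en d\'eduit imm\'ediatement'' and does not write out the exhaustion/Arzel\`a--Ascoli argument for surjectivity --- but your detailed version is exactly how one makes that phrase rigorous, and the exhaustion step you describe (with the crucial remark that $c(\delta_1)$ is domain-independent) is the natural implementation.
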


À nouveau, mentionnons que dans cet énoncé, aucun effort particulier
n'est fait pour obtenir un intervalle optimal pour le poids $\delta_2$.

\smallskip

\begin{proof}
  Nous avons déjà montré la première partie du lemme. Reste à
  démontrer l'énoncé concernant l'opérateur $\Delta+\lambda$. On voit
  immédiatement que si $\lambda\geq 0$, alors la fonction $ -\Delta\log w-|d\log w|^2+\lambda $
  reste minorée par une constante strictement positive, pourvu que
  $\delta_1$ reste dans l'intervalle prescrit par (\ref{eq:39}), et sous la
  même condition sur $\delta_2$. Le même principe de maximum donne alors
  immédiatement le résultat.
\end{proof}

\begin{rema}\label{rema:lambda-negatif}
  Dans le cas $\lambda<0$, il n'est plus toujours vrai que la fonction
  $-\Delta\log w-|d\log w|^2+\lambda$ reste positive sous les conditions 
  du lemme \ref{lemm:est-poids}~:
  elle ne l'est que pour $r$ assez grand. Le même résultat
  d'isomorphisme reste néanmoins vrai, avec une démonstration plus
  compliquée, passant par une première estimation globale plus faible,
  puis une seconde estimation à l'infini en utilisant la positivité
  asymptotique de $-\Delta\log w-|d\log w|^2+\lambda$. Nous ne donnons pas de détail
  car nous n'utiliserons pas ce résultat dans l'article.
 \end{rema}

 Examinons maintenant des problèmes d'analyse sur des systèmes. Le
 cadre général sera celui d'opérateurs géométriques du type
 $L=\nabla^*\nabla+\cR_0$ sur un fibré tensoriel $E$, où $\cR_0$ est un terme de
 courbure, donc à coefficients constants dans toute base orthonormée.
 L'analyse de tels opérateurs sur l'espace hyperbolique réel ou
 complexe est menée dans \cite{Biq00}. Son comportement est gouverné
 par l'opérateur indiciel : si $t$ est la distance à un point, alors
 l'opérateur $L$ «~se comporte~» quand $t$ tend vers l'infini comme
 l'opérateur
\begin{equation}
\mathcal{I} = -\partial_t^2 - \cH \partial_t + \tilde{A} + \cR_0 ,\label{eq:34}
\end{equation}
où $\tilde{A}$ consiste en les termes d'ordre zéro de $\nabla^*\nabla$ (voir
la section \ref{sec:quatre} pour des détails). Si $\lambda$ est la
plus petite valeur propre de $\tilde{A}+\cR_0$, les premiers poids critiques de
(\ref{eq:34}), à savoir les $\mu$ tels que $e^{\mu t}$ soit annulé par 
\begin{equation}\label{eq:34bis} 
-\partial_t^2 - \cH \partial_t + \lambda 
\end{equation}
sont $\mu_±=\frac{\cH}2±\sqrt{\frac{\cH^2}4+\lambda}$. 
A chaque valeur propre de $\tilde{A} + \cR_0$ est en réalité associée une paire de poids critiques~; 
celle correspondant au $i$-ème espace propre sera notée $( \mu_-^{(i)} , \mu_+^{(i)} )$.
L'expression \emph{poids critiques supérieurs} désignera la famille formée 
par tous les $\mu_+^{(i)}$ 
(dénomination analogue pour \emph{inférieurs})~; 
le poids critique supérieur (resp. inférieur) le plus petit (resp. grand) est 
$$\mu_+  =  \min_i \mu_+^{(i)} \quad \textrm{ (resp. } \mu_- = \max_i \mu_-^{(i)} \textrm{)}.$$ 
Le poids critique supérieur 
(resp. inférieur) le plus grand (resp. petit) sera dénoté $\mu_+^{max}$ (resp $\mu_-^{min}$), 
de telle sorte que tous les poids critiques sont situés dans l'intervalle $[\mu_-^{min},\mu_+^{max}]$. 

\smallskip

Il est montré dans \cite{Biq00} que, si l'opérateur $L$ est inversible dans $L^2$, alors :
\begin{itemize}
\item $L$ est un isomorphisme $C^{k,\alpha}_\delta\to C^{k-2,\alpha}_\delta$ pourvu que
  $\mu_-<\delta<\mu_+$ (où $C^{k,\alpha}_\delta=\frac 1{\cosh^\delta t}C^{k,\alpha}$) ;
\item il existe une constante $c>0$ telle que la fonction de Green $G$
  de $L$ soit contrôlée par la fonction de Green $G_\lambda$ de l'opérateur
  scalaire $\Delta+\lambda$ :
  \begin{equation}
    \label{eq:38}
    |G| \leq c G_\lambda .
  \end{equation}
\end{itemize}
Il en résulte qu'une solution $u$ de l'équation $Lu=v$ sur l'espace
hyperbolique satisfait $|u|\leq cu_0$, où $u_0$ est la fonction solution
de $(\Delta+\lambda)u_0=|v|$. Appliquant le lemme \ref{lemm:laplacien-scalaire},
on en déduit un contrôle de $u_0$ dans l'espace $C^0_{\delta_1,\delta_2}$, et
donc de $u$. Le contrôle des autres dérivées vient par régularité
elliptique, et il en résulte :

\begin{lemm}\label{lemm:P}
  Soit $L=\nabla^*\nabla+\cR_0$ inversible dans $L^2$, et $\lambda\geq 0$ la plus petite
  valeur propre de $\tilde{A}+\cR_0$. Si $\delta_1$ et $\delta_2$ satisfont les
  conditions du lemme \ref{lemm:laplacien-scalaire}, alors $L$ est un
  isomorphisme $C^{k,\alpha}_{\delta_1,\delta_2}\to C^{k-2,\alpha}_{\delta_1,\delta_2}$.\qed
\end{lemm}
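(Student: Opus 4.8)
The plan is to reduce the vector-bundle statement to the scalar isomorphism result of Lemma \ref{lemm:laplacien-scalaire} by using the two facts quoted from \cite{Biq00}: the $L^2$-invertibility of $L$ together with the pointwise Green's function bound \eqref{eq:38}. The strategy is therefore to establish the isomorphism by constructing the inverse explicitly via the Green's function of $L$ and then estimating it in the double-weighted norm.

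First I would set up the solution operator. Given $v\in C^{k-2,\alpha}_{\delta_1,\delta_2}$, the $L^2$-invertibility of $L$ produces a solution $u$ of $Lu=v$ on all of $H$ (real or complex), represented by $u=G*v$ where $G$ is the Green's function of $L$. Since $\lambda\geq 0$ is the smallest eigenvalue of $\tilde A+\cR_0$, the bound \eqref{eq:38} gives $|u|\leq c\,G_\lambda*|v|=c\,u_0$ pointwise, where $u_0$ solves the scalar equation $(\Delta+\lambda)u_0=|v|$. This is the crucial reduction: it converts a question about a tensor-valued operator into a question about a single positive scalar function $u_0$.

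Next I would invoke the scalar result. Because $v\in C^{k-2,\alpha}_{\delta_1,\delta_2}$, in particular $|v|$ is controlled by $w^{-1}$ up to a constant, so $|v|\in C^{0}_{\delta_1,\delta_2}$. Under the hypotheses on $\delta_1,\delta_2$ inherited from Lemma \ref{lemm:laplacien-scalaire} (which for $\Delta+\lambda$ amount to \eqref{eq:39} together with the condition on $\delta_2$), the operator $\Delta+\lambda$ is an isomorphism $C^{k,\alpha}_{\delta_1,\delta_2}\to C^{k-2,\alpha}_{\delta_1,\delta_2}$ on $H$, and in particular the maximum-principle estimate gives a bound $\sup|w u_0|\leq C\sup|w|v||$. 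Combining this with $|u|\leq c\,u_0$ yields $\sup|wu|\leq C'\sup|w|v||<\infty$, i.e. $u\in C^0_{\delta_1,\delta_2}$. The control of the higher derivatives, and hence membership of $u$ in $C^{k,\alpha}_{\delta_1,\delta_2}$, then follows from interior elliptic regularity for $L$ applied on balls of fixed radius, rescaled by the weight $w$, exactly as sketched in the paragraph preceding the lemma.

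The remaining point is that this solution operator is a genuine two-sided inverse. Injectivity is immediate: if $Lu=0$ with $u\in C^{k,\alpha}_{\delta_1,\delta_2}$, then the above estimate forces $u=0$ (alternatively, such a decaying $u$ lies in $L^2$ and $L$ is $L^2$-invertible). Surjectivity is exactly the construction above. I expect the main obstacle to be purely bookkeeping: verifying carefully that the weight conditions of Lemma \ref{lemm:est-poids}/\ref{lemm:laplacien-scalaire} are compatible with the value of $\lambda$ arising as the bottom of the spectrum of $\tilde A+\cR_0$, and that the constant $\lambda$ appearing in the scalar comparison operator $\Delta+\lambda$ is precisely the one controlling the decay in \eqref{eq:38}. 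Since the statement of the lemma simply assumes these compatibility conditions, no genuine analytic difficulty remains once the Green's function comparison \eqref{eq:38} is taken as given; the proof is essentially a one-line deduction, which is why the excerpt closes it with \qed rather than a detailed argument.
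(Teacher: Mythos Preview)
Your proposal is correct and follows exactly the argument the paper sketches just before the lemma: use the Green's function bound \eqref{eq:38} to reduce to the scalar equation $(\Delta+\lambda)u_0=|v|$, apply Lemma~\ref{lemm:laplacien-scalaire} to control $u_0$ in $C^0_{\delta_1,\delta_2}$, and finish with elliptic regularity. There is nothing to add; the paper itself treats this as a one-line deduction and closes with \qed.
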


\begin{rema}
  Le lemme reste vrai pour $\lambda<0$, mais comme nous n'avons pas traité
  ce cas (voir la remarque \ref{rema:lambda-negatif}), nous ne
  l'énonçons pas ici.
\end{rema}
\begin{rema}\label{rema:Kato}
  Le lemme reste vrai si on regarde le problème sur un demi-espace
  hyperbolique, avec condition de Dirichlet au bord. La démonstration
  en étant plus compliquée, nous nous contenterons de la remarque plus
  faible suivante : le lemme reste vrai dans ce cas, si $\lambda\geq 0$ est la
  plus petite valeur propre de $\cR_0$ (et non de $\tilde{A}+\cR_0$). En effet,
  on peut alors utiliser l'inégalité de Kato pour obtenir
  \begin{equation}
    \label{eq:40}
    \langle u,\nabla^*\nabla u\rangle = |u|\Delta|u|+|\nabla u|^2-|d|u| |^2 \geq |u|\Delta|u|,
  \end{equation}
  donc $(\Delta+\lambda)|u|\leq |\nabla^*\nabla u+\lambda u|$, ce qui permet d'appliquer 
le principe du maximum, et donc la démonstration du lemme
  \ref{lemm:laplacien-scalaire}.
\end{rema}

%
%
%
%
%
%
%

\section{Métriques asymptotiquement hyperboliques}
\label{sec:newtrois}

Nous nous plaçons ici sur une variété $M$ de dimension $n$ difféomorphe à
une demi-boule $B_+$ définie par
\begin{equation}
  \label{eq:43}
  B_+ =
  \begin{cases}
    \{x_1^2+\cdots+x_n^2< 1, x_1> 0\} &\text{ dans le cas réel,}\\
    \{\big(x_1+\frac{x_3^2+\cdots+x_n^2}{2}\big)^2+x_2^2<1, x_1> 0\} &\text{ dans
      le cas complexe.}  
  \end{cases}
\end{equation}
En effet, pour la métrique hyperbolique dans le modèle du demi-espace,
le domaine $B_+$ correspond exactement à un côté d'un hyperplan
totalement géodésique (dans le cas réel) ou d'un bisecteur (dans le
cas complexe). Nous distinguerons par la suite le bord à l'infini
\begin{equation*}
 \partial_{\infty}M \simeq \begin{cases}
    \{x_2^2+\cdots+x_n^2< 1, x_1= 0\} &\text{ dans le cas réel,}\\
    \{\big(\frac{x_3^2+\cdots+x_n^2}{2}\big)^2+x_2^2<1, x_1=0\} &\text{ dans
      le cas complexe}  
  \end{cases}
\end{equation*}
et le bord à l'intérieur 
\begin{equation*}
 \partial M =
  \begin{cases}
    \{x_1^2+\cdots+x_n^2= 1, x_1> 0\} &\text{ dans le cas réel,}\\
    \{\big(x_1+\frac{x_3^2+\cdots+x_n^2}{2}\big)^2+x_2^2 = 1, x_1> 0\} &\text{ dans
      le cas complexe,}  
  \end{cases}
\end{equation*}
tous deux difféomorphes à une boule ouverte d'un espace euclidien de
dimension $n-1$, et de bord commun la sphère
$$ S=\widebar{\partial M} \cap \widebar{\partial_\infty M} . $$
Observons maintenant que nous disposons de la dilatation, pour $t>0$,
\begin{equation}
  \label{eq:44}
  h_t(x_1,\dots,x_n) =
  \begin{cases}
    (tx_1,\dots,tx_n) &\text{ dans le cas réel,}\\
    (tx_1,tx_2,\sqrt t x_3,\dots,\sqrt t x_n) &\text{ dans le cas complexe.}
  \end{cases}
\end{equation}
Elle laisse invariante la métrique hyperbolique
\begin{equation}
  \label{eq:42}
  g_{h}=
  \begin{cases}
    \frac{dx_1^2+\cdots+dx_n^2}{x_1^2} &\text{ dans le cas réel,}\\
    \frac{dx_1^2+\eta^2}{x_1^2}+\frac{dx_3^2+\cdots+dx_n^2}{x_1} &\text{ dans
      le cas complexe.}
  \end{cases}
\end{equation}
Nous munissons maintenant $M$ d'une métrique asymptotiquement
hyperbolique réelle ou hyperbolique complexe. Cela
signifie que, près de $x_1=0$, la métrique $g$ est asymptote à
\begin{equation}
  \label{eq:35}
  g_0 = g_0(\gamma) :=
  \begin{cases}
    \frac{(dx_1)^2+\gamma}{x_1^2} &\text{ dans le cas réel,} \\
    \frac{(dx_1)^2+\eta^2}{x_1^2}+\frac \gamma x_1 &\text{ dans le cas complexe.}
  \end{cases}
\end{equation}
Dans le cas réel, $\gamma$ est une métrique sur $\partial_{\infty} M$ ; dans le cas
complexe, $\eta$ est une 1-forme de contact sur $\partial_{\infty} M$, on choisit une
structure presque complexe $J$ sur la distribution de contact $\ker
\eta$, de sorte que $\gamma(\cdot,\cdot)=d\eta(\cdot,J\cdot)$ soit une métrique sur $\ker \eta$~;
l'ensemble forme donc une structure (presque) CR (Cauchy-Riemann).
Comme notre but dans cet article est une étude locale près de
l'origine, on peut toujours se restreindre à une plus petite
demi-boule, ramenée à $B_+$ par dilatation. Aussi peut-on supposer dans le cas
complexe que dans les coordonnées locales

$(x_1,x_2,\dots,x_n)$ la forme de contact a la forme standard
(\ref{eq:6}) :
\begin{equation}
  \label{eq:41}
  \eta = dx_2 + \tfrac 12 (x_3 dx_4-x_4 dx_3 + x_5 dx_6-x_6 dx_5+\cdots ) .
\end{equation}
Enfin, on peut aussi supposer, quitte à modifier à nouveau les
coordonnées, qu'à l'origine, la métrique $\gamma$ est égale à celle du
modèle hyperbolique, à savoir $\gamma(0)=dx_2^2+\cdots+dx_n^2$ dans le cas réel,
et $\gamma(0)=dx_3^2+\cdots+dx_n^2$ dans le cas complexe. Dans ce cas, les
dilatations $h_t$ ramènent $g_0$ à $g_h$ quand $t$ tend vers $0$ :
\begin{equation}
  \label{eq:52}
  h_t^* g_0 \underset{t\to0}{\longrightarrow} g_h .
\end{equation}

On notera $\cM$ l'espace des métriques $\gamma$, toujours prises $C^\infty$. De
manière plus précise, nous dirons maintenant que $g$ est
asymptotiquement hyperbolique s'il existe $\delta>0$ tel que $g-g_0\in
C^{1,\alpha}_\delta$, où
\begin{equation}
  \label{eq:36}
  C^{k,\alpha}_\delta = x^\delta C^{k,\alpha}(g_0) 
\end{equation}
(où nous noterons désormais $x=x_1$).
Les métriques $g$, $g_0$, et la métrique hyperbolique $g_{h}$ sont
quasi-isométriques. Aussi les espaces de Hölder à poids (\ref{eq:36})
peuvent être définis indifféremment par rapport à $g_0$ ou $g_h$ (mais
pas par rapport à $g$ qui a priori n'est pas supposée lisse).

La donnée de la métrique $\gamma$ sur le bord n'est pas un invariant de la
métrique asymptotiquement hyperbolique. En effet, un changement de
coordonnées de la forme $\bar x = x e^{\omega}$ où $\omega$ est une fonction sur
$\partial_{\infty} M$ change $\gamma$ en $e^{2\omega}\gamma$ ou $e^\omega\gamma$ suivant le cas. La seule
donnée géométrique est donc la structure conforme $[\gamma]$ dans le cas
réel, ou la structure (presque) CR donnée par $[\eta]$ et $J$ dans le cas complexe
(par souci de simplicité, nous utiliserons le vocable «~classe
conforme~» dans les deux cas, la situation complexe pouvant être vue
comme une structure conforme sur une distribution de codimension~$1$).

Étant donnée une métrique asymptotiquement hyperbolique, il sera utile
de choisir un représentant de $[\gamma]$ adapté à la géométrie de la
demi-boule, et des coordonnées nous rapprochant des modèles
(\ref{eq:23}) et (\ref{eq:12}). Dans le cas réel, ce choix est fait
dans \cite[section 4]{Biq08} : sur $\partial_\infty M$ et partant d'une métrique $\gamma$
régulière jusqu'au bord $S$ inclus, on considère $y$ la distance
pour $\gamma$ au bord $S$, de sorte que près de $S$ on a 
\begin{equation}
\gamma=dy^2+\bar \gamma_y,\label{eq:53}
\end{equation}
où $\gamma_y$ est une famille de métriques sur $S$. On pose alors
\begin{equation}
  \label{eq:47}
  x = \frac{2u}{1+u^2}v , \quad y = \frac{1-u^2}{1+u^2} v ,
\end{equation}
pour obtenir
\begin{equation}
  \label{eq:48}
  g_0 = \frac 1{u^2}\Big( du^2 + \frac{(1+u^2)^2}4 \frac{dv^2
                         +\bar \gamma_y}{v^2} \Big) .
\end{equation}
Cette formule généralise l'écriture de la métrique hyperbolique comme
\begin{equation}
  \label{eq:51}
  g_h = \frac 1{u^2}\Big( du^2 + \frac{(1+u^2)^2}4 \frac{dv^2+dx_3^2+\cdots
    +dx_n^2}{v^2} \Big)
\end{equation}
(c'est la formule (\ref{eq:23}) avec $r=-\log u$).  Par rapport à la
coordonnée $u$, l'infini conforme de $g_0$ devient le représentant de
$[\gamma]$ donné par
\begin{equation}
  \label{eq:49}
  \tilde \gamma = \frac \gamma{4y^2} = \frac{dy^2+\bar \gamma_y}{4y^2},
\end{equation}
une métrique asymptotiquement hyperbolique sur $\partial_\infty M$.

\smallskip

Le cas complexe est plus délicat. Tous les points de la sphère de
Heisenberg $S$ ne sont pas équivalents, la structure de contact
devient tangente à $S$ aux deux points $p_±=(x_2=±1,x_3=\cdots x_n=0)$.
Nous commençons par rendre la structure CR standard en ces deux
points. Sans être absolument nécessaire, cette normalisation permet de
simplifier les calculs.
\begin{lemm}\label{lem:std-J}
  Quitte à faire agir un contactomorphisme de $\partial_\infty M$, on peut
  supposer que la structure CR $J$ coïncide avec la structure CR
  standard $J_0$ aux deux points $p_±$.
\end{lemm}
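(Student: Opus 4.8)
The plan is to reduce the statement to a pointwise problem at $p_+$ and $p_-$, to solve that problem by linear algebra, and then to realize the solution by a single contactomorphism of $\partial_\infty M$. First I record the action of a contactomorphism $\Phi$ of $(\partial_\infty M,\ker\eta)$ on the complex structure. From $\Phi^*\eta=\lambda\eta$ with $\lambda$ nowhere zero one gets $\Phi^*(d\eta)=\lambda\,d\eta+d\lambda\wedge\eta$, so along $\ker\eta$ the differential $d\Phi$ is a \emph{conformal symplectomorphism} for $d\eta$; the transformed structure is $\Phi^*J=(d\Phi)^{-1}\circ J\circ d\Phi$, and the conformal factor drops out of this conjugation. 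Since $J$ and $J_0$ are both $d\eta$-compatible and positive, at each point $p_\pm$ they are two points of the Siegel space $\mathrm{Sp}(2m-2,\mathbb{R})/U(m-1)$; as $\mathrm{Sp}(2m-2,\mathbb{R})$ acts transitively there, there is a linear symplectomorphism $A_\pm$ of $(\ker\eta_{p_\pm},d\eta_{p_\pm})$ with $A_\pm^{-1}J(p_\pm)A_\pm=J_0(p_\pm)$.

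Next I would build a contactomorphism $\Phi$ of $\partial_\infty M$ fixing $p_+$ and $p_-$ with $d\Phi_{p_\pm}=A_\pm$ along $\ker\eta$. Because $\partial_\infty M$ sits inside the standard Heisenberg contact manifold and $\eta$ is the restriction of the ambient contact form, I have Darboux coordinates near each $p_\pm$ (even though these are boundary points where the contact plane is tangent to $S$), and it suffices to work in the ambient space and then restrict. Contact vector fields correspond bijectively to contact Hamiltonians $H=\eta(X)$, and cutting $H$ off near a point leaves its flow a genuine contactomorphism; the linearization at a fixed point of the time-one flow is determined by the $2$-jet of $H$ there. Since $p_+\neq p_-$, I may prescribe the $2$-jet of a single $H$ independently at the two points and let it vanish elsewhere, producing a globally defined contactomorphism realizing $A_+$ and $A_-$ simultaneously; by construction $\Phi^*J$ then equals $J_0$ at $p_\pm$.

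The one point needing genuine justification, and the main obstacle, is that every $A_\pm\in\mathrm{Sp}(2m-2,\mathbb{R})$ arises as such a linearization. I would argue this without solving for $H$ explicitly: the set of differentials at $p_\pm$ realizable by contactomorphisms fixing $p_\pm$ is a subgroup of $\mathrm{Sp}(2m-2,\mathbb{R})$ (composing contactomorphisms multiplies the differentials), and it contains a neighbourhood of the identity because the quadratic contact Hamiltonians in Darboux coordinates generate, to first order, the whole Lie algebra $\mathfrak{sp}(2m-2,\mathbb{R})$ of symplectic linearizations. As $\mathrm{Sp}(2m-2,\mathbb{R})$ is connected, a subgroup containing a neighbourhood of the identity is the full group, so every $A_\pm$ is attained. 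This surjectivity, together with the independence of the two $2$-jets, is the crux; once it is in place the construction is routine, and I stress that no control of $\Phi$ away from $p_\pm$ is required, which is precisely what makes the two-point normalization possible on the connected manifold $\partial_\infty M$.
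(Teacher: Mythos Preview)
Your argument is correct and supplies exactly what the paper omits: the authors write only ``C'est un fait classique, laiss\'e au lecteur'', adding that when $J$ is close to $J_0$ (which a preliminary dilation always arranges) the contactomorphism can be taken to modify only a small neighbourhood of $p_\pm$. Your reduction via the transitive $\mathrm{Sp}(2m-2,\mathbb{R})$-action on compatible positive complex structures, followed by the realization of the required linear maps as differentials of contactomorphisms through contact Hamiltonians and the open-subgroup argument, is the natural way to make this precise.

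One point to tighten: since $p_\pm\in S=\partial(\partial_\infty M)$, the phrase ``work in the ambient space and then restrict'' is slightly off---your compactly supported contactomorphism of the ambient Heisenberg group need not preserve the domain $\partial_\infty M$, so there is nothing to restrict. What you are really doing is pulling $J$ back by an ambient contactomorphism and accepting a possibly deformed domain; this is harmless here because the paper freely shrinks and rescales the half-ball, and it is also why the paper's remark that $\Phi$ may be taken $C^1$-close to the identity when $J$ is close to $J_0$ is the operative statement in practice.
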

\begin{proof}
  C'est un fait classique, laissé au lecteur. Si $J$ est assez proche
  de $J_0$, on peut s'arranger pour que le contactomorphisme ne
  modifie qu'un petit voisinage de $p_+$ et $p_-$.
\end{proof}

Nous passons à une seconde normalisation, analogue au choix de la
coordonnée $y$ dans le cas réel de sorte que (\ref{eq:53}) soit
satisfaite. Prenons des coordonnées $Z=(z_i)$ sur la sphère de
Heisenberg épointée $S-\{p_+\}=\setC^{m-1}$, et une coordonnée transverse
$y$, de sorte que la structure de contact s'écrive sous la forme
standard $\eta=dy+\eta'$, où $\eta'=\frac 12\Im(\bar z_1dz_1+\cdots +\bar
z_{m-1}dz_{m-1}).$ Le point $p_-$ correspond donc à $z_i=0$.
Considérons sur $S$ la fonction 
$$ \varphi=|d_Hy|^2 , $$
où l'indice $H$ signifie qu'on évalue la norme de la restriction de
$dy$ à la distribution de contact. Pour le modèle hyperbolique
complexe, on calcule immédiatement $\varphi=\frac{r^2}4$ ; en général, la
fonction $\varphi$ ne s'annule pas sur $S-\{p_±\}$, et son comportement près
de $p_±$, par le lemme \ref{lem:std-J}, est celui du modèle, à savoir
$\varphi=\frac{r^2}4+ O((\frac r{1+r^2})^3)$ près de $p_±$.

\begin{lemm}\label{lem:std-phi}
  Quitte à faire agir un contactomorphisme de $\partial_\infty M$, préservant
  globalement $S$, on peut supposer que la fonction $\varphi$ est égale à sa
  valeur pour le modèle, à savoir $\varphi=\frac{r^2}4$.
\end{lemm}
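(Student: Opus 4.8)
Le plan est d'ajuster la coordonnée transverse $y$ elle-même, par un contactomorphisme préservant la distribution de contact, de manière à corriger la fonction $\varphi=|d_Hy|^2$. La stratégie repose sur l'observation suivante : remplacer la coordonnée $y$ par une nouvelle coordonnée $\tilde y=y+\psi$, où $\psi$ est une fonction bien choisie sur $S-\{p_+\}$, revient à composer par un contactomorphisme pourvu que l'on ajuste simultanément les coordonnées $z_i$ pour préserver la forme standard $\eta=d\tilde y+\eta'$ de la structure de contact. La question devient alors une équation sur $\psi$ : on veut que $|d_H\tilde y|^2$ soit égale à la valeur modèle $\frac{r^2}4$.

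Plus précisément, je commencerais par exprimer $d_H\tilde y=d_Hy+d_H\psi$ et donc $|d_H\tilde y|^2=\varphi+2\langle d_Hy,d_H\psi\rangle+|d_H\psi|^2$, où le produit scalaire est pris par rapport à la métrique $\gamma$ sur la distribution de contact. L'équation à résoudre, $|d_H\tilde y|^2=\frac{r^2}4$, est donc
\begin{equation}
2\langle d_Hy,d_H\psi\rangle+|d_H\psi|^2=\frac{r^2}4-\varphi.
\end{equation}
Le terme de droite est petit près des pôles $p_\pm$ (il est $O((\frac r{1+r^2})^3)$ d'après le développement de $\varphi$ donné avant le lemme), et ne s'annule pas nécessairement ailleurs. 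Je chercherais à résoudre cette équation en la linéarisant : on néglige d'abord le terme quadratique $|d_H\psi|^2$ pour obtenir l'équation de transport linéaire du premier ordre $2\langle d_Hy,d_H\psi\rangle=\frac{r^2}4-\varphi$, puis on itère ou l'on invoque un théorème des fonctions implicites pour traiter la correction non linéaire. L'équation linéaire est de type transport le long du champ de vecteurs $X_H$ dual de $d_Hy$ dans la distribution de contact, et s'intègre le long des trajectoires de ce champ.

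La difficulté principale sera d'assurer que la solution $\psi$ a le bon comportement aux deux pôles $p_\pm$, afin que le contactomorphisme résultant préserve globalement $S$ et se prolonge régulièrement à travers les pôles. Comme la normalisation du lemme \ref{lem:std-J} garantit déjà que $\varphi$ coïncide avec la valeur modèle à l'ordre principal près de $p_\pm$, le membre de droite s'annule suffisamment vite aux pôles pour que l'équation de transport admette une solution régulière qui ne perturbe que l'intérieur de $S-\{p_\pm\}$, en laissant les pôles fixes. Je vérifierais enfin que, une fois $\psi$ construite, le changement de coordonnées $(y,z)\mapsto(\tilde y,\tilde z)$ préservant $\eta$ définit bien un contactomorphisme global de $\partial_\infty M$ préservant $S$, et que la fonction $\varphi$ recalculée dans les nouvelles coordonnées vaut exactement $\frac{r^2}4$. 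Le point délicat reste le contrôle de l'intégration de l'équation de transport au voisinage de l'épine où $r\to 0$, mais la structure du membre de droite en $r^2$ y assure la régularité voulue.
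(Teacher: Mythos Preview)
Your proposal has a genuine gap. You want to replace $y$ by $\tilde y=y+\psi$ with $\psi$ a function on $S-\{p_+\}$, but recall that $S$ is the hypersurface $\{y=0\}$ inside $\partial_\infty M$ and $y$ is the \emph{transverse} coordinate. If $\psi$ depends only on the $z_i$ and is not identically zero, then $\{\tilde y=0\}=\{y=-\psi(z)\}\neq S$: your contactomorphism moves $S$ and therefore does not ``pr\'eserve globalement $S$'' as the lemma requires. If instead you force $\psi|_S=0$ (say $\psi=y\,h(z)+O(y^2)$), then on $S$ one has $d_H\tilde y=(1+h)\,d_Hy$, so $|d_H\tilde y|^2=(1+h)^2\varphi$: the correction is \emph{multiplicative}, not the additive transport equation you wrote, and you are still left with the nontrivial task of checking that such a change of $y$ (together with an unspecified adjustment of the $z_i$) really is a contactomorphism.

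The paper avoids both issues by working from the start with contact Hamiltonian vector fields, which generate contactomorphisms by construction. One takes $X_f=f\partial_y-\sharp d_Hf$ with the specific choice $f=g\cdot y$ (here $g$ is a function on $S$); since $f|_S=0$, the field $X_f$ is tangent to $S$ and the flow preserves $S$ automatically. Along $S$ one computes that $X_f$ restricts to the radial vector $\tfrac g2\,r\partial_r$ and that $\cL_{X_f}d_Hy=g\,d_Hy$: the flow acts on $d_Hy$ by a pointwise scalar factor along each radial ray. Choosing $g$ so that the time-one flow rescales $|d_Hy|$ to $\tfrac r2$ is then an elementary matter, with no PDE or transport equation to solve and no separate verification that the map is a contactomorphism.
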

\begin{proof}
  On fait agir l'exponentielle d'un champ de vecteurs préservant la
  structure de contact, $X_f=f \partial_y - \sharp d_Hf$, où $\sharp \alpha$ est le vecteur
  horizontal défini par $\sharp \alpha\lrcorner d\eta=\alpha$, et on choisit la fonction réelle
  $$ f(s,y) = g(s) y . $$
  Alors $X_f=y(g\partial_y-\sharp d_Hg) + \frac g2 r\partial_r$ est tangent au bord
  $S=\{y=0\}$, et y coïncide avec le vecteur radial $\frac g2 r\partial_r$. Il
  en résulte, le long de $S$,
  $$ \cL_{X_f}d_Hy = - \frac g2 r \partial_r \lrcorner d\eta = g d_Hy . $$
  En ajustant $g$, on peut s'arranger pour que le flot $(\psi_t)$ de
  contactomorphismes engendré par $X_f$ satisfasse $|\psi_1^*d_Hy| =
  \frac r2$, ou, de manière équivalente, $|d_Hy|_{(\psi_1)_*J}=\frac r2$.
\end{proof}

Maintenant, on écrit explicitement le changement de variables qui
permet d'obtenir les formules du lemme \ref{lemm:met-demi}. Étendons
la fonction $\varphi$ sur $B_+$ par la formule
$$ \varrho = \sqrt{(x+\varphi)^2+y^2} . $$
Posons alors
\begin{equation}
 u = \frac{2x}{x-\varphi+\varrho}, \quad v^2 = \frac{x-\varphi+\varrho}{x+\varphi+\varrho} .
 \label{eq:55}
\end{equation}
Utilisant
\begin{equation}
  \label{eq:63}
  V = v\sqrt{1-u},
\end{equation}
on en tire
\begin{equation}
 x = \frac{u v^2 \varrho}{1+V^2}, \quad y = \frac{2V\varrho}{1+V^2}, \quad
 \varphi = \varrho \frac{1-V^2}{1+V^2} .\label{eq:56}
\end{equation}
On observe facilement que $u\leq 1$ et $v\leq 1$. Le passage de $(x,y)$ à
$(u,v)$ présente une certaine analogie avec le cas réel.  Finalement,
on tordra les coordonnées en les ramenant par l'application $F$
définie par
\begin{equation}
  \label{eq:50}
  F(u,v,Z) = (x,y,e^{-i\arctan V}Z) .
\end{equation}

\begin{lemm}\label{lem:lien-hc}
  Les coordonnées dans lesquelles la métrique hyperbolique complexe a
  la forme du lemme \ref{lemm:met-demi} sont données par
$$ \frac 1u = \cosh^2\frac s2, \quad 
   \frac 1v = \cosh  \frac \rho2, \quad
   \varrho = e^{-2\tau}. $$
\end{lemm}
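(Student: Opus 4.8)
The plan is to compose the two coordinate changes explicitly and check that they agree. On one side we have the change of variables (\ref{eq:14})--(\ref{eq:15}) of Lemma \ref{lemm:met-demi}, expressing the Siegel coordinates $(z_1,\dots,z_m)$ in terms of $(s,\rho,\tau)$ and the sphere variable; on the other side we have the maps (\ref{eq:55})--(\ref{eq:56}) together with the twist $F$ of (\ref{eq:50}), expressing the same Siegel coordinates in terms of $(u,v,\varrho,Z)$. Since $x=f=\Re(z_m)-\tfrac14\sum|z_i|^2$, $y=-\Im(z_m)$, and, for the model, $\varphi=\tfrac14\sum|z_i|^2$, the first step is to translate everything into the single pair $(\Re z_m,\Im z_m)$ and the modulus $\sum|z_i|^2$.

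First I would extract the half-angle data from (\ref{eq:15}). Writing $c=\cosh\tfrac\rho2$ and $T=\tanh\tfrac s2$, a direct computation gives $t^2=\tfrac{4\sinh^2(\rho/2)}{c^2+T^2}$ together with $e^{i\alpha/2}=\tfrac{c-iT}{\sqrt{c^2+T^2}}$, hence $\cos\alpha=\tfrac{c^2-T^2}{c^2+T^2}$ and $\tan\tfrac\alpha2=-\tfrac Tc$. Substituting into $x$, $y$ and $\varphi$ and using the single identity $\cosh^2\tfrac\rho2-\sinh^2\tfrac\rho2=1$ repeatedly, one finds $x+\varphi=\Re(z_m)$ and $y=-\Im(z_m)$, so that
$$\varrho=\sqrt{(x+\varphi)^2+y^2}=|z_m|.$$
This is the clean observation that drives the whole argument: the auxiliary function $\varrho$ is nothing but the distance $|z_m|$ to the origin in the Siegel chart, and the stated relation $\varrho=e^{-2\tau}$ then amounts to the choice of normalisation (and sign) of the spine coordinate $\tau$, i.e.\ of the parametrisation of $\Sigma=\{(0,z_m)\}$ by $|z_m|$ fixed in Section~\ref{sec:deux}.

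Next I would feed these expressions into (\ref{eq:55}). The numerators and denominators of $u=\tfrac{2x}{x-\varphi+\varrho}$ and of $v^2=\tfrac{x-\varphi+\varrho}{x+\varphi+\varrho}$ all carry the common factor $|z_m|/(c^2+T^2)$, which cancels; the same identity $\cosh^2\tfrac\rho2-\sinh^2\tfrac\rho2=1$ collapses $x-\varphi+\varrho$ to $2|z_m|/(c^2+T^2)$ and $x+\varphi+\varrho$ to $2c^2|z_m|/(c^2+T^2)$. This yields at once $u=1-T^2=1/\cosh^2\tfrac s2$ and $v^2=c^{-2}=1/\cosh^2\tfrac\rho2$, that is $\tfrac1u=\cosh^2\tfrac s2$ and $\tfrac1v=\cosh\tfrac\rho2$.

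Finally one must check that the twist $F$ of (\ref{eq:50}) reconciles the phases of the $Z$-variable. With the values just found, $V=v\sqrt{1-u}=T/c$, so $\arctan V=-\tfrac\alpha2$ and the factor $e^{-i\arctan V}$ equals $e^{i\alpha/2}$, which is precisely the phase appearing in the $z_i$-component $e^{(\tau+i\alpha)/2}ty$ of (\ref{eq:14}); hence after the twist the coordinate $Z$ is the genuine $S^{2m-3}$-variable of Lemma \ref{lemm:met-demi}, and the two coordinate systems coincide. I expect the only real difficulty to be bookkeeping: keeping track of the normalisation and sign of $\tau$ so that $\varrho=|z_m|$ becomes the stated $e^{-2\tau}$, and carrying out the simplifications of (\ref{eq:55})--(\ref{eq:56}) without error. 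The algebra is entirely driven by the half-angle formulae above and by $\cosh^2\tfrac\rho2-\sinh^2\tfrac\rho2=1$, so no idea beyond Lemma \ref{lemm:met-demi} is needed.
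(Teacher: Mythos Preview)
Your approach is exactly what the paper does: its proof is the single sentence ``Le calcul se fait \`a partir des formules (\ref{eq:14}) et (\ref{eq:15})'', and you have carried out that computation in full. The key identification $\varrho=|z_m|$ and the half-angle algebra giving $u=1-\tanh^2\tfrac s2$ and $v^2=1/\cosh^2\tfrac\rho2$ are correct, as is your check of the phase via $V=\tanh(\tfrac s2)/\cosh(\tfrac\rho2)$; your remark that the precise exponent in $\varrho=e^{-2\tau}$ is a matter of the paper's normalisation of $\tau$ is the right attitude, since formulas (\ref{eq:14}) and (\ref{eq:28}) themselves use slightly inconsistent conventions for the spine parameter.
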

\begin{proof}
  Le calcul se fait à partir des formules (\ref{eq:14}) et (\ref{eq:15}).
\end{proof}

Passons maintenant à la métrique asymptotiquement hyperbolique
complexe $g_0=\frac{dx^2+\eta^2}{x^2}+\frac \gamma x$. On va maintenant
montrer qu'elle a une forme convenable dans les mêmes coordonnées que
pour $g_h$.

La métrique $\gamma$ le long de $S$ s'écrit
\begin{equation}
 \gamma = \frac{|d_Hy|^2}\varphi+\bar \gamma , \quad
 \bar \gamma=\frac{|Jd_Hy|^2+4\bar \gamma'}\varphi,\label{eq:61}
\end{equation}
où $4\bar \gamma'/\varphi$ est la métrique induite sur la distribution complexe
maximale $\cC\subset TS\cap \ker \eta$.

Pour la métrique hyperbolique $g_h$, on a
$\gamma=\frac{d\varphi^2}{\varphi}+4\varphi\gamma_{S^{2m-3}}$. En particulier, $\bar \gamma'=\gamma'$, où,
comme dans le lemme \ref{lemm:met-demi}, $\gamma'$ est la métrique standard
sur la distribution de contact de la sphère $S^{2m-3}$. Vu le lemme
\ref{lem:std-J}, dans le cas général, on a $\bar \gamma' \to \gamma' $ aux points
$p_±$. On obtient alors les formules suivantes, qui généralisent le
lemme \ref{lemm:met-demi} :
\begin{lemm}\label{lem:modele-local}
  Posons
  \begin{equation}
    \label{eq:58}
    \tilde \eta = \frac 1{v^2\varrho}\big(\eta'+\frac{2V}{1+V^2}d\varrho\big) .
  \end{equation}
  Alors
  \begin{equation}
    \label{eq:57}
    g_0 = \frac 1{1-u}\frac{du^2}{u^2}+\frac{\tilde \eta^2}{u^2}
         +\frac{\tilde \gamma}u+ O(\sqrt{u}v) ,
  \end{equation}
où $\tilde \gamma$ est une métrique sur le bord à l'infini $\{u=0\}$, définie
sur la distribution de contact, et satisfaisant, quand $v\to0$,
\begin{equation}
  \label{eq:60}
  \tilde \gamma = \frac{4 dv^2+\varrho^{-1}\bar \gamma}{v^2}+ O(v) .
\end{equation}
En outre, au voisinage de $p_±$ :
\begin{equation}
  \label{eq:59}
  \varrho^{-1}\bar \gamma \sim \frac{d\varrho^2}{\varrho^2} + 4 \gamma' .  
\end{equation}
Les dérivées covariantes des termes $O(\sqrt{u}v)$ sont aussi
$O(\sqrt{u}v)$.
\end{lemm}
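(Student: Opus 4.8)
Le point de d�part est l'observation que le terme $\frac{dx^2+\eta^2}{x^2}$ de $g_0$ co�ncide \emph{exactement} avec celui de la m�trique hyperbolique mod�le $g_h$ : apr�s les normalisations des lemmes \ref{lem:std-J} et \ref{lem:std-phi}, la fonction $x$, la coordonn�e transverse $y$ et la forme de contact $\eta=dy+\eta'$ sont les m�mes que dans le mod�le. On dispose donc de la d�composition \emph{exacte}
$$ g_0 = g_h + \frac{\gamma-\gamma_h}{x}, $$
o� $\gamma_h=\frac{d\varphi^2}{\varphi}+4\varphi\gamma_{S^{2m-3}}$ est la m�trique au bord du mod�le. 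Le plan est de traiter s�par�ment la partie mod�le $g_h$, pour laquelle tout est explicite gr�ce au lemme \ref{lemm:met-demi}, et le terme correctif $\frac{\gamma-\gamma_h}{x}$, qui porte toute la nouveaut�.

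Pour la partie mod�le, j'appliquerais le lemme \ref{lemm:met-demi} apr�s le changement de variables du lemme \ref{lem:lien-hc}, soit $\frac 1u=\cosh^2\frac s2$, $\frac 1v=\cosh\frac\rho2$, $\varrho=e^{-2\tau}$. Un calcul direct montre, via $\tanh^2\frac s2=1-u$ et $du=-u\sqrt{1-u}\,ds$, que $ds^2$ devient $\frac 1{1-u}\frac{du^2}{u^2}$ ; que $\cosh^4(\frac s2)\cosh^2(\frac\rho2)\vartheta_2^2$ devient $\frac{\vartheta_2^2}{u^2v^2}$, terme qui, apr�s insertion de la torsion $F$ de (\ref{eq:50}), se met sous la forme $\frac{\tilde\eta^2}{u^2}$ ; et que le crochet $\cosh^2(\frac s2)[\,\cdots]$ fournit les termes d'ordre $\frac 1u$. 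Le m�canisme cl� est ici la torsion : le facteur $e^{-i\arctan V}$ transforme $\eta'$ en $\eta'-\frac 12|Z|^2\,d(\arctan V)$, et c'est pr�cis�ment ce terme crois� en $d\varrho$ qui reconstitue la $1$-forme $\tilde\eta$ de (\ref{eq:58}). La r��criture de $\vartheta_1$ dans une base non d�g�n�r�e, comme en (\ref{eq:54}), produit alors la m�trique mod�le sur la distribution de contact, � un reste $O(u)$ pr�s.

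Pour le terme correctif, j'utiliserais $x=\frac{uv^2\varrho}{1+V^2}$ avec $V=v\sqrt{1-u}$, d'o� $\frac 1x\sim\frac 1{uv^2\varrho}$ quand $v\to0$. Port� dans $\frac{\gamma-\gamma_h}{x}$ avec l'�criture (\ref{eq:61}) de $\gamma$ le long de $S$, sa partie principale sur la distribution de contact vaut $\frac{\varrho^{-1}(\bar\gamma-\bar\gamma_h)}{uv^2}$ : elle ach�ve de d�finir $\tilde\gamma$ et donne (\ref{eq:60}) par passage � la limite $v\to0$, tandis que (\ref{eq:59}) au voisinage de $p_\pm$ r�sulte du comportement $\bar\gamma'\to\gamma'$ assur� par le lemme \ref{lem:std-J} et de la normalisation $\varphi=\frac{r^2}4$ du lemme \ref{lem:std-phi}. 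L'estimation du reste est le point d�licat : il faut montrer que les contributions sous-dominantes --- le facteur $\frac 1{1+V^2}$, les termes du d�veloppement de $\gamma$ transverses � $S$, et les termes crois�s introduits par la torsion $F$ --- sont toutes $O(\sqrt u\,v)$ uniform�ment jusqu'au coin $S$ et pr�s des points $p_\pm$ o� la base de contact d�g�n�re ; c'est l� que les deux normalisations pr�alables sont essentielles. Enfin, le changement de variables et les fonctions poids �tant lisses � d�riv�es contr�l�es, la d�rivation covariante pr�serve la taille $O(\sqrt u\,v)$ des restes, ce qui donne la derni�re assertion du lemme.
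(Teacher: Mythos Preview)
Your approach is correct but genuinely different from the paper's. You decompose $g_0=g_h+\frac{\gamma-\gamma_h}{x}$ and invoke Lemma~\ref{lemm:met-demi} for the model part, then treat the correction $\frac{\gamma-\gamma_h}{x}$ as a perturbation supported near $S$. The paper instead computes each piece of $g_0=\frac{dx^2}{x^2}+\frac{\eta^2}{x^2}+\frac{\gamma}{x}$ \emph{directly} in the coordinates $(u,v,\varrho,Z)$, without ever passing through the $(s,\rho,\tau)$ form of $g_h$. Concretely, it expands $\frac{dx^2}{x^2}$ from $x=\frac{uv^2\varrho}{1+V^2}$, obtains a cross-term $4\frac{du}{u}\frac{dv}{v}$, then establishes the exact identity $\frac{\eta}{x}=\frac{\tilde\eta}{u}$ (this is the key step, left to the reader), and finally computes $\frac{|d_Hy|^2}{\varphi x}$ via $\frac{|dy|^2}{\varphi x}$, which produces a matching cross-term $-4\frac{du}{u}\frac{dv}{v}$ and the coefficient $\frac{1}{1-u}$ in front of $\frac{du^2}{u^2}$; the cancellation of the cross-terms is explicit.

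What each approach buys: the paper's route isolates the single clean identity $\frac{\eta}{x}=\frac{\tilde\eta}{u}$, which explains at once why the twist $F$ is the right map, and makes the emergence of the $\frac{1}{1-u}$ factor and the $O(\sqrt u\,v)$ remainder completely transparent through the explicit cancellation. Your route is more conceptual --- the model part is already known --- but you trade one computation for another: converting $\vartheta_2$ into $v\tilde\eta$ under the twist, and handling the degeneracy of $(\vartheta_1,\vartheta_2)$ as $\rho\to\infty$ via (\ref{eq:54}), are together about as much work as the paper's direct expansion. In your scheme the cross-term cancellation is hidden inside Lemma~\ref{lemm:met-demi}, which is fine, but be aware that verifying $\vartheta_2\leftrightarrow v\tilde\eta$ through the change of variables of Lemma~\ref{lem:lien-hc} plus the rotation $F$ is essentially equivalent to the paper's identity $\frac{\eta}{x}=\frac{\tilde\eta}{u}$, so you are not actually avoiding that computation.
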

\begin{rema}
  Il est clair dans cette formule (ou aussi bien dans celles du lemme
  \ref{lem:lien-hc}), qu'une équation lisse du bord intérieur $\{u=1\}$
  est $\sqrt{1-u}$ plutôt que $1-u$.
\end{rema}
\begin{proof}
Le premier terme donne :
\begin{align}
  \frac{dx^2}{x^2}
 &= \left( \frac{du}u+2\frac{dv}v+\frac{d\varrho}\varrho-\frac{2dV}{1+V^2}
    \right)^2 \notag\\
 &= \frac{du^2}{u^2}+4\frac{du}u\frac{dv}v+4\frac{dv^2}{v^2}
    + O(\sqrt u v) .\label{eq:64}
\end{align}
Le $O(\sqrt u v)$ est évalué par rapport à la forme (\ref{eq:57}) que
nous visons pour la métrique.

Un calcul plus délicat, prenant en compte la rotation (\ref{eq:50})
sur la coordonnée $Z$, est laissé au lecteur, qui arrivera à l'égalité
sur les formes de contact :
$$ \frac{\eta}{x} = \frac{\tilde \eta}{u} . $$

Il reste à examiner le terme $\frac \gamma x=\frac {(1+V^2)\gamma}{uv^2\varrho}$. Remarquons
que $d_Hy=dy-\eta$, d'où résulte $\frac{|d_Hy|^2}x=\frac{|dy|^2}x+
O(\sqrt x)=\frac{|dy|^2}x+ O(\sqrt u v \varrho)$. Ainsi dans le calcul, peut-on
remplacer $d_Hy$ par $dy$ sans modifier l'asymptotique de la
métrique. Or, après calcul,
\begin{multline*}
 \frac{|dy|^2}{\varphi x} = \frac 4{u(1-V^2)} \Bigg(
   (1-u) \Big( \frac{1-V^2}{1+V^2} \frac{dv}v + \frac{d\varrho}\varrho \Big)^2
 \\ + \frac{1-V^2}{1+V^2} \frac{du^2}{4(1-u)}
 - \frac{1-V^2}{1+V^2} \Big( \frac{1-V^2}{1+V^2} \frac{dv}v +
            \frac{d\varrho}\varrho \Big) \Bigg).
\end{multline*}
Analysant chaque terme, il reste
\begin{equation}
  \label{eq:65}
  \frac{|dy|^2}{\varphi x}
 = \frac {4(1-u)}u \Big(\frac{dv^2}{v^2} + O(v)\Big) 
  - \frac{du^2}{1-u} 
  - 4 \frac{du}u \frac{dv}v
  + O(\sqrt u v).
\end{equation}
L'addition de (\ref{eq:64}) et (\ref{eq:65}) donne
$$
\frac{dx^2}{x^2}+\frac{|dy|^2}{\varphi x}
= \frac 1{1-u} \frac{du^2}{u^2}
 + \frac 1u\Big(\frac{dv^2}{v^2} + O(v)\Big)
 + O(\sqrt u v).
$$
Le reste de la métrique se traite de manière similaire.
\end{proof}

%
%
%
%
%
%
%
%
%

\section{Jauge locale}

\label{sec:trois}

Nous continuons ici notre étude des métriques asymptotiquement
hyperboliques.  Nous disposons sur l'espace modèle des coordonnées
adaptées à un demi-espace définies dans les sections précédentes et
des espaces à double poids $C^{k,\alpha}_{\delta_1,\delta_2}$ de la section
\ref{sec:deux}. Par rapport à cet espace à poids, la condition
initiale $g-g_0\in C^{1,\alpha}_\delta$ implique $g-g_0\in C^{1,\alpha}_{\delta,\delta}$. Plus
généralement, nous avons

\begin{lemm}\label{lemm:inclusion}
  Pour tous $k\in\NM$, $\alpha\in]0,1[$, et tout $\delta\geq 0$, l'espace $C^{k,\alpha}_\delta$
  est inclus dans $C^{k,\alpha}_{\delta,\delta}$. La même conclusion est valable sur
  un demi-espace hyperbolique.
\end{lemm}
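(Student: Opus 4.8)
The plan is to turn the inclusion into a multiplier statement. By definition~(\ref{eq:36}) we have $C^{k,\alpha}_\delta=x^\delta C^{k,\alpha}(g_0)$, while $C^{k,\alpha}_{\delta,\delta}=\tfrac1w C^{k,\alpha}(g_0)$ with $w$ given by~(\ref{eq:25}) in the real case and by~(\ref{eq:27}) in the complex case, specialized to $\delta_1=\delta_2=\delta$. Both spaces are thus built on the \emph{same} algebra $C^{k,\alpha}(g_0)$, which is closed under pointwise products. Hence, to prove $C^{k,\alpha}_\delta\subset C^{k,\alpha}_{\delta,\delta}$ it is enough to show that the weight ratio $x^\delta w$ is a bounded element of $C^{k,\alpha}(g_0)$: for $f=x^\delta u\in C^{k,\alpha}_\delta$ one then gets $wf=(x^\delta w)\,u\in C^{k,\alpha}(g_0)$, i.e. $f\in C^{k,\alpha}_{\delta,\delta}$. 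By the remark following~(\ref{eq:36}) the spaces $C^{k,\alpha}(g_0)$ and $C^{k,\alpha}(g_h)$ coincide, so I may carry out all estimates with the explicit metric of Lemma~\ref{lemm:met-demi}.

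Next I would compute $x^\delta w=\chi^\delta$ explicitly from the coordinate changes of Section~\ref{sec:newtrois}. In the real case the relation $x\cosh s=v$ read off from~(\ref{eq:47}), together with the slice identification $v=e^{-\rho}$ matching~(\ref{eq:24}) and~(\ref{eq:48}), gives $\log\chi=\log\cosh\rho-\rho$, so that $\chi=\tfrac12(1+v^2)\in[\tfrac12,1]$; in particular $\chi$ is bounded above. In the complex case, combining Lemma~\ref{lem:lien-hc} with~(\ref{eq:55})--(\ref{eq:56}) and~(\ref{eq:63}) yields $x^\delta w=\big(\tfrac{\varrho\cosh(\tau/2)}{1+V^2}\big)^\delta$; since $\varrho$ and $V$ stay bounded on $B_+$ and $1+V^2\geq1$, the function $\chi=\tfrac{\varrho\cosh(\tau/2)}{1+V^2}$ is again bounded above. (It does tend to $0$ near the complex spine, where $\varrho\to0$, which is precisely why only the inclusion, and not an equality, holds in the complex case.) In both cases it is here that the hypothesis $\delta\geq0$ is used: it ensures that $\chi^\delta=x^\delta w$ is bounded.

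It then remains to bound the covariant derivatives of $\chi^\delta$ up to order $k$, with $\alpha$-Hölder top derivative. Writing $\chi^\delta=e^{\delta\log\chi}$, each such derivative equals $\chi^\delta$ times a universal polynomial in the covariant derivatives of $\log\chi$; as $\chi^\delta$ is already bounded, it suffices to control the derivatives of $\log\chi$ (which remain finite even where $\chi\to0$). Up to constants $\log\chi$ is a sum of $\log x$, $\log\cosh\tfrac s2$, $\log\cosh\tfrac\rho2$, $\log\cosh\tfrac\tau2$ (respectively $\log x$, $\log\cosh s$, $\log\cosh\rho$ in the real case). Their first $g_h$-derivatives are bounded by direct inspection of~(\ref{eq:12}): for instance $d\log\cosh\tfrac s2=\tfrac12\tanh(\tfrac s2)\,ds$ has $g_h$-norm $\leq\tfrac12$, and the factors attached to $d\rho$, $d\tau$ and $dx$ are bounded because in the metric of Lemma~\ref{lemm:met-demi} these coordinate differentials have uniformly bounded $g_h$-norm. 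The higher covariant derivatives bring in the Hessians $\nabla^2s,\nabla^2\rho,\dots$, that is the second fundamental forms of the level hypersurfaces and their iterated covariant derivatives, which are uniformly bounded by Lemma~\ref{lemm:uniform}. Hence $\chi^\delta\in C^{k,\alpha}(g_0)$, completing the multiplier argument.

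The only genuinely non-formal input is the global upper bound on $\chi$, which rests on the explicit coordinate formulas of Section~\ref{sec:newtrois} --- the identity $x\cosh s=v$ in the real case, and the boundedness of $\varrho$ and $V$ on $B_+$ in the complex case; I expect this to be the main point, everything else (the algebra property, the reduction to $\log\chi$, and the derivative bounds via Lemma~\ref{lemm:uniform}) being routine. Finally, on a hyperbolic half-space the very same argument applies, the computation of $\chi$ being there exact; this yields the last sentence of the lemma.
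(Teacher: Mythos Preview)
Your approach is the paper's approach: reduce the inclusion to the multiplier bound $x^\delta w\in C^{k,\alpha}(g_0)$ and read this off from the coordinate changes of Section~\ref{sec:newtrois}. The paper's own proof is extremely terse --- it just records $x=\frac{2uv}{1+u^2}$ with $w=u^{-\delta_1}v^{-\delta_2}$ in the real case and $x=uv^2\tilde\psi$ with $w=u^{-\delta_1}(v^2\psi)^{-\delta_2}$ in the complex case, and stops there --- so your write-up is considerably more detailed, in particular in treating the H\"older derivative bounds via Lemma~\ref{lemm:uniform}, which the paper leaves implicit.

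One small point to tighten: the ``slice identification $v=e^{-\rho}$'' is not literally correct. In~(\ref{eq:24}) the slice $\setR H^{n-1}$ is written in \emph{polar} coordinates $(\rho,\omega)$, whereas in~(\ref{eq:48})/(\ref{eq:51}) the same slice appears in \emph{half-space} coordinates $(v,x_3,\dots,x_n)$; there is no global relation $v=e^{-\rho}$ between these two charts. What is true (and what the paper implicitly uses by writing $w=u^{-\delta_1}v^{-\delta_2}$) is that on $B_+$ the weights $\cosh\rho$ and $v^{-1}$ are uniformly comparable with bounded derivatives --- e.g.\ for the origin at $(v,x')=(1,0)$ one has $v\cosh\rho=\tfrac12(1+v^2+|x'|^2)$, bounded on the half-ball --- so the weighted spaces coincide and your bounded $\chi$ survives. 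Your complex computation $x^\delta w=\big(\tfrac{\varrho\cosh(\tau/2)}{1+V^2}\big)^\delta$ is correct; note that the boundedness really uses $\varrho\cosh(\tau/2)=\tfrac12(\varrho^{3/4}+\varrho^{5/4})$ via $\varrho=e^{-2\tau}$ (Lemma~\ref{lem:lien-hc}), not just the boundedness of $\varrho$ alone.
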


\begin{proof}
  C'est une conséquence immédiate des changements de variable calculés
  dans la section précédente. D'une part, dans le cas réel, on a
  $$x=\frac{2uv}{1+u^2} \quad \textrm{ avec } w=u^{-\delta_1}v^{-\delta_2}.$$ 
  D'autre part, dans le cas complexe, on a 
  $$x=u v^2 \tilde \psi \quad \textrm{ avec } w=u^{-\delta_1}(v^2\psi)^{-\delta_2},$$
  ce qui conclut la preuve.
\end{proof}

La première étape de la démonstration de la régularité locale consiste à
trouver une jauge locale pour l'action des difféomorphismes pour la
mé­tri­que $h_t^*g$, si $t$ est assez petit (c'est-à-dire en se
restreignant à une plus petite boule si nécessaire). Soit l'opérateur
de Bianchi agissant sur les formes bilinéaires symétriques,
\begin{equation}
  \label{eq:46}
  B_g = \delta_g + \tfrac 12 d \Tr_g .
\end{equation}
Définissons un groupe de difféomorphismes $\cD^{k,\alpha}_{\delta_1,\delta_2}$ de la
demi-boule $M$, induisant l'identité au bord à l'infini $x_1=0$, en
disant que $\Phi\in\cD^{k,\alpha}_{\delta_1,\delta_2}$ si $\Phi=\exp_{g_0}(X)$ pour un champ
de vecteurs $X\in C^{k,\alpha}_{\delta_1,\delta_2}$. Soit $g_0$ est la métrique définie
par \eqref{eq:35}, et $\bar g$ une autre métrique, fixée, avec le même
comportement asymptotique : $\bar g-g_0\in C^{\infty}_{\delta,\delta}$. Alors on a :

\begin{lemm}\label{lemm:jauge}
  Pour tout $t>0$ assez petit, il existe un 
  difféomorphisme $\Phi\in \cD^{2,\alpha}_{\delta,\delta}$ de $M$, induisant l'identité 
  au bord intérieur $\partial M$, tel que
  $$ B_{h_t^*\bar g}(\Phi^*h_t^*g) = 0. $$
\end{lemm}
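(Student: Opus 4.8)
The plan is to solve the gauge equation $N(X)=0$ by a fixed-point argument in the double-weighted Hölder spaces of Section \ref{sec:deux}, where
\[
N(X):=B_{h_t^*\bar g}\big((\exp_{g_0}X)^*h_t^*g\big),\qquad X\in{}^0C^{2,\alpha}_{\delta,\delta},
\]
the superscript $0$ imposing $X=0$ on $\partial M$, so that $\Phi=\exp_{g_0}X$ is the identity on the interior boundary (it is automatically the identity at infinity since $X\in C^{2,\alpha}_{\delta,\delta}$ decays). Writing $g_t=h_t^*g$ and $\bar g_t=h_t^*\bar g$, the target is $N(X)=0$ with values in $C^{0,\alpha}_{\delta,\delta}$; I will show that $N(0)$ is small and that $DN_0$ is an isomorphism, both uniformly as $t\to0$, and then conclude by a contraction.

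First I estimate $N(0)$. Naturality of the Bianchi operator under diffeomorphisms gives $N(0)=B_{\bar g_t}(g_t)=h_t^*\big(B_{\bar g}(g)\big)$. Since $B_{\bar g}$ is linear in its argument and $B_{\bar g}\bar g=0$, we have $B_{\bar g}(g)=B_{\bar g}(g-\bar g)$, and as $g-\bar g\in C^{1,\alpha}_{\delta,\delta}$ (using $g-g_0\in C^{1,\alpha}_\delta\subset C^{1,\alpha}_{\delta,\delta}$ by Lemma \ref{lemm:inclusion}, together with $\bar g-g_0\in C^\infty_{\delta,\delta}$), the first-order operator $B_{\bar g}$ yields $B_{\bar g}(g)\in C^{0,\alpha}_{\delta,\delta}$. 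As $t\to0$ the dilation $h_t$ concentrates toward the corner $S$, and because the double weight decays to positive order there, pulling back a fixed element of $C^{0,\alpha}_{\delta,\delta}$ multiplies its norm by a positive power of $t$; hence $\|N(0)\|_{C^{0,\alpha}_{\delta,\delta}}\to0$. (In the real case one checks directly from \eqref{eq:52} that $h_t$ fixes $u$ and contracts $v$ by $t$, giving the sharp rate $O(t^\delta)$.)

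Next I compute the linearisation. Since $\tfrac{d}{ds}\big|_{0}(\exp_{g_0}sY)^*g_t=\cL_Yg_t$, we get $DN_0(Y)=B_{\bar g_t}(\cL_Yg_t)$. Replacing $g_t$ by $\bar g_t$ in the Lie derivative changes this only by $B_{\bar g_t}\big(\cL_Y(g_t-\bar g_t)\big)$, a perturbation with coefficients controlled by $g_t-\bar g_t$, hence small for $t$ small; the main term is $2B_{\bar g_t}\delta_{\bar g_t}^*$, which the Weitzenböck identity $2B_g\delta_g^*=\nabla^*\nabla-\Ric$ identifies as an operator of the geometric type $\nabla^*\nabla+\cR_0$ with $\cR_0=-\Ric$. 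As both models are Einstein with $\Ric<0$, the endomorphism $\cR_0=-\Ric$ is a positive constant times the identity, so the smallest eigenvalue of $\cR_0$ is a constant $\lambda>0$; by Remark \ref{rema:Kato} the corresponding Dirichlet operator on $H_+$ is an isomorphism ${}^0C^{2,\alpha}_{\delta,\delta}\to C^{0,\alpha}_{\delta,\delta}$ as soon as $\delta$ lies in the range of Lemma \ref{lemm:laplacien-scalaire}. Because $\bar g_t\to g_h$ and $g_t\to g_h$ (in $C^{1,\alpha}_{\delta,\delta}$ by \eqref{eq:52}), the operator $DN_0$ converges to this invertible model operator; isomorphisms being open, $DN_0$ is a uniform isomorphism for $t$ small.

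It remains to conclude. Writing $N(X)=N(0)+DN_0X+Q(X)$ with $Q$ the quadratic remainder, the equation $N(X)=0$ is equivalent to the fixed-point problem $X=-DN_0^{-1}\big(N(0)+Q(X)\big)$; the uniform bound on $DN_0^{-1}$, the smallness of $N(0)$, and the quadratic smallness of $Q$ on small balls make the right-hand side a contraction of a small ball of ${}^0C^{2,\alpha}_{\delta,\delta}$ for $t$ small, producing the desired $X$ and hence $\Phi\in\cD^{2,\alpha}_{\delta,\delta}$. The main obstacle is the uniform invertibility of the linearisation as $t\to0$: one must know that the model gauged Laplacian is invertible under the Dirichlet condition, which is exactly where the positivity of $-\Ric$ and the Kato inequality of Remark \ref{rema:Kato} enter, and one must control the perturbation $DN_0-(\text{model})$ together with $Q$ in the double-weighted norm uniformly in $t$. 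The smoothness of $X\mapsto(\exp_{g_0}X)^*g_t$ between these weighted Hölder spaces is routine but must be recorded.
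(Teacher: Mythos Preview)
Your argument is correct and follows essentially the same strategy as the paper's: reduce to the invertibility of the model operator $2B_{g_h}\delta^*_{g_h}=\nabla^*\nabla-\Ric(g_h)$ on $^0C^{2,\alpha}_{\delta,\delta}$ via Remark~\ref{rema:Kato}, then perturb. The only organizational difference is that the paper rewrites the equation as $B_{\Phi_*h_t^*\bar g}(h_t^*g)=0$ and applies the implicit function theorem once, with the boundary metric $\gamma$ and the perturbations $h_0,h$ as explicit parameters, at the hyperbolic point $(\mathrm{id},\gamma_h,0,0)$; this packages the smallness of $N(0)$ and the convergence of $DN_0$ into the single observation~\eqref{eq:45} and avoids tracking the $t$-dependence of inverses by hand.
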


\begin{proof}
  Observons que quand $t\to0$, alors $h_t^*\gamma\to\gamma_{h}$ et $h_t^*g\to g_{h} $,
  avec plus précisément
  \begin{equation}
  \label{eq:45}
  \| h_t^*(g-\bar g) \|_{C^{1,\alpha}_\delta} = O(t^\delta) ,\quad  
 \| h_t^*\bar g-g_h \|_{C^{k,\alpha}_0} = O(t).
  \end{equation}
  La première inégalité est immédiate, la seconde est une conséquence
  des raisonnements tenus dans le début de la preuve de la Proposition
  I.3.5 de \cite{Biq00}.

  On écrit alors l'équation à résoudre comme
  $B_{\Phi_*h_t^*\bar g}(h_t^*g)=0$.  On obtient le résultat en
  appliquant le théorème des fonctions implicites à l'opérateur
  $$ \mathcal{B}:\cD^{2,\alpha}_{\delta,\delta} × \cM × C^{\infty}_{\delta,\delta} × C^{1,\alpha}_{\delta,\delta} 
  \longrightarrow C^\alpha_{\delta,\delta} , $$ défini par
  $\mathcal{B}(\Phi,\gamma,h_0,h)=B_{\Phi_*\bar g}(h)$. En effet, d'après
  \cite{Biq00}, la différentielle $\frac{\partial\mathcal{B}}{\partial\Phi}$ au point
  $(\textrm{id},\gamma_{h},0,0)$ est
  $B_{g_h}\delta^*_{g_h}=\frac12(\nabla^*\nabla-\Ric(g_h))$ où $\nabla$ est la connexion
  de Levi-Civita de $g_h$. C'est un isomorphisme par le lemme
  \ref{lemm:P}, y compris lorsqu'il est restreint aux champs de
  vecteurs nuls au bord intérieur par la remarque \ref{rema:Kato}.
\end{proof}

\begin{rema}
  On n'a pas vraiment utilisé dans cette démonstration les espaces
  fonctionnels à double poids $C^{k,\alpha}_{\delta_1,\delta_2}$, puisque la
  résolution pouvait aussi bien se faire dans $C^{k,\alpha}_\delta$. Leur
  utilité apparaîtra plus loin, quand on augmentera $\delta_1$ en laissant
  fixe $\delta_2$. Cette méthode permettra d'ignorer les problèmes de
  comportement des solutions au coin formé par le bord commun au bord intérieur et au
  bord à l'infini.
\end{rema}
 
Le lemme suivant montre que si $g$ satisfait en outre l'équation
d'Einstein, elle est nécessairement lisse dans la jauge précédente :

\begin{lemm}\label{lem:reg-locale}
  Supposons que $g-g_0\in C^{1,\alpha}_{\delta,\delta}$ et que $g$ soit Einstein. Alors,
  dans la jauge construite par le lemme \ref{lemm:jauge}, pour $t$
  assez petit, on a $h_t^*(g-\bar g)\in C^\infty_{\delta,\delta}$, avec pour tout $k$ une
  estimation $\|h_t^*(g-\bar g)\|_{C^k_{\delta,\delta}}\leq c_k \|h_t^*(g-\bar g)\|_{C^1_{\delta,\delta}}$.
\end{lemm}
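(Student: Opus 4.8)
The plan is to prove elliptic regularity in the geodesic gauge constructed in Lemma~\ref{lemm:jauge}, exploiting the fact that in this gauge the Einstein equation becomes an elliptic system for which the analysis of Lemma~\ref{lemm:P} (and its Dirichlet variant, Remark~\ref{rema:Kato}) applies. First I would recall that the Einstein condition $\Ric(g)+(n-1)g=0$ (real case, with the analogous normalization in the complex case) together with the Bianchi gauge condition $B_{h_t^*\bar g}(\Phi^* h_t^* g)=0$ yields a nonlinear \emph{elliptic} equation for $g$. Writing $\bar g_t=h_t^*\bar g$ and $g_t=\Phi^*h_t^*g$, the gauged Einstein operator has the schematic form
\begin{equation*}
  \mathcal{E}(g_t) = \tfrac12\nabla^*\nabla(g_t-\bar g_t) + \cR_0\,(g_t-\bar g_t) + Q(g_t-\bar g_t,\nabla(g_t-\bar g_t)) = 0,
\end{equation*}
where $\nabla$ is the Levi-Civita connection of $\bar g_t$, the operator $\nabla^*\nabla+\cR_0$ is of the geometric type treated in Lemma~\ref{lemm:P}, and $Q$ collects the quadratic-and-higher nonlinearities, each term of which carries at least one extra factor of $(g_t-\bar g_t)$ or its derivative. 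This is exactly the reduction used in \cite{Biq00}: the DeTurck--Bianchi trick converts the diffeomorphism-degenerate Einstein operator into the elliptic operator $\tfrac12(\nabla^*\nabla-\Ric)$ at leading order, which is the operator whose invertibility was already invoked in the proof of Lemma~\ref{lemm:jauge}.

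Next I would set up a bootstrap. Assume inductively $h_t^*(g-\bar g)\in C^{k,\alpha}_{\delta,\delta}$ with the estimate $\|h_t^*(g-\bar g)\|_{C^{k,\alpha}_{\delta,\delta}}\le c_k\|h_t^*(g-\bar g)\|_{C^{1,\alpha}_{\delta,\delta}}$; the base case $k=1$ is the hypothesis. Rewriting the gauged Einstein equation as
\begin{equation*}
  \bigl(\nabla^*\nabla+\cR_0\bigr)(g_t-\bar g_t) = -2\,\mathcal{E}(\bar g_t) - 2\,Q\bigl(g_t-\bar g_t,\nabla(g_t-\bar g_t)\bigr),
\end{equation*}
I would read off that the right-hand side lies in $C^{k-1,\alpha}_{\delta,\delta}$: the source term $\mathcal{E}(\bar g_t)$ is smooth and decaying because $\bar g$ is smooth with $\bar g-g_0\in C^\infty_{\delta,\delta}$, while the quadratic remainder $Q$ is a sum of products of $(g_t-\bar g_t)$ and its derivatives up to order $k$, hence lies in $C^{k-1,\alpha}_{\delta,\delta}$ by the induction hypothesis and the algebra property of the weighted H\"older spaces (each nonlinear term gains a decay factor, so the weight $\delta$ is preserved and not degraded). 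Applying the isomorphism of Lemma~\ref{lemm:P} in its Dirichlet form on the half-space---legitimate by Remark~\ref{rema:Kato}, since $\Phi$ fixes the inner boundary $\partial M$ pointwise so $g_t-\bar g_t$ satisfies the Dirichlet condition there---then upgrades $g_t-\bar g_t$ to $C^{k+1,\alpha}_{\delta,\delta}$, with the estimate propagating through the inductive constants. Iterating over $k$ gives $h_t^*(g-\bar g)\in C^\infty_{\delta,\delta}$ together with the stated family of estimates.

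The main obstacle is the behavior at the corner $S=\widebar{\partial M}\cap\widebar{\partial_\infty M}$, where the two boundaries meet and where ordinary single-weight elliptic theory on an asymptotically hyperbolic space would not close. This is precisely what the double-weight spaces $C^{k,\alpha}_{\delta_1,\delta_2}$ were designed to handle, as emphasized in the remark following Lemma~\ref{lemm:jauge}. The key technical point is that the isomorphism of Lemma~\ref{lemm:P} holds uniformly in the second weight $\delta_2$, so that the boundedness of the second fundamental form of the level sets $s=\mathrm{cst}$ (Lemma~\ref{lemm:uniform}) guarantees uniform elliptic estimates near $S$; without the uniform control of all covariant derivatives of $\II$ furnished by that lemma, the constants $c_k$ would degenerate as one approaches the corner. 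A secondary subtlety is checking that the nonlinear terms $Q$ genuinely preserve the weight rather than merely the H\"older order---this requires that the lowest-order coefficient $\cR_0$ be the correct curvature term, which is why the operator must be organized exactly as $\nabla^*\nabla+\cR_0$ in an orthonormal frame. Once these two points are in place, the bootstrap is routine and the proof concludes.
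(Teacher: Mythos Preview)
Your argument has a genuine gap at the step where you invoke the Dirichlet isomorphism from Remark~\ref{rema:Kato}. You claim that $g_t-\bar g_t$ vanishes on the inner boundary $\partial M$ because $\Phi$ fixes $\partial M$ pointwise. But $\Phi|_{\partial M}=\Id$ only gives $g_t|_{\partial M}=(h_t^*g)|_{\partial M}$; there is no reason for $h_t^*g$ and $h_t^*\bar g$ to agree on $\partial M$, since $g$ and $\bar g$ are simply two different metrics with the same asymptotic behaviour at infinity. So $g_t-\bar g_t$ does \emph{not} satisfy the Dirichlet condition, and the half-space isomorphism cannot be applied to it. A secondary issue is that your bootstrap does not close as written: the quadratic remainder $Q$ contains terms of the schematic form $(g_t-\bar g_t)\cdot\nabla^2(g_t-\bar g_t)$, so from $g_t-\bar g_t\in C^{k,\alpha}_{\delta,\delta}$ you only get $Q\in C^{k-2,\alpha}_{\delta,\delta}$, not $C^{k-1,\alpha}_{\delta,\delta}$, and the inversion gives back $C^{k,\alpha}_{\delta,\delta}$ with no gain.

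The paper bypasses both problems by observing that the statement is \emph{local}. Over any ball of bounded $\bar g$-radius the weight $w$ varies by a bounded factor, so the weighted estimate $\|g_t-\bar g_t\|_{C^{k,\alpha}_{\delta,\delta}}\le c_k\|g_t-\bar g_t\|_{C^{1,\alpha}_{\delta,\delta}}$ reduces to the unweighted interior estimate $\|g_t-\bar g_t\|_{C^{k,\alpha}}\le c_k\|g_t-\bar g_t\|_{C^{1,\alpha}}$ on each such ball. That in turn is exactly the classical regularity of Einstein metrics in harmonic-type coordinates: one writes the Bianchi condition $B_{\bar g}g=0$ and the Einstein equation together in coordinates, differentiates the former, and obtains a system $-h^{abcd}_{jk}\partial^2_{ab}g_{cd}=(\text{order}\le 1)$ whose principal symbol is $\bar g^{ab}\delta^{cd}_{jk}$ plus a perturbation of size $\|g-\bar g\|_{C^0}$, hence elliptic for $t$ small. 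Standard local Schauder theory then gives all higher derivatives. No global inverse and no boundary condition are needed; this is why the paper says the argument is ``enti\`erement analogue \`a la r\'egularit\'e des m\'etriques d'Einstein dans des coordonn\'ees harmoniques''.
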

\begin{proof}
  On se place pour $t>0$ assez petit, après avoir appliqué la
  dilatation $h_t$, que l'on omettra dans la suite de la
  démonstration. La question est locale, et on est donc ramené à la
  traiter dans une boule pour $\bar g$, avec $\|g-\bar
  g\|_{C^{1,\alpha}_{\delta,\delta}}$ petite (donc en particulier $\|g-\bar g\|_{C^0}$
  est petite). Dans une boule pour $\bar g$ la variation des poids est
  bornée et l'estimation nécessaire se ramène à estimer $\|g-\bar
  g\|_{C^{k,\alpha}}$ par $\|g-\bar g\|_{C^{1,\alpha}}$.  C'est un énoncé
  entièrement analogue à la régularité des métriques d'Einstein dans
  des coordonnées harmoniques, et on sera donc concis.  La condition
  de jauge $B_{\bar g}g=0$ s'écrit
$$ {\bar g}^{il}(\partial_lg_{ij}-\frac12 \partial_jg_{il})
  = \text{termes d'ordre $0$ en $g$} .
$$
  D'un autre côté,
$$ \Ric^g_{jk} = \frac12 g^{il}(-\partial^2_{il}g_{jk}+\partial^2_{ik}g_{lj}+\partial^2_{ij}g_{lk}-\partial^2_{jk}g_{il})
  + \text{termes d'ordre $\leq 1$ en $g$} .
$$
  L'équation d'Einstein implique donc, en différenciant la première
  équation,
$$ - {\bar g}^{il} \partial^2_{il}g_{jk}  + (g^{il}-\bar g^{il})(\partial^2_{ik}g_{lj}+\partial^2_{ij}g_{lk}-\partial^2_{jk}g_{il})
  = \text{termes d'ordre $\leq 1$ en $g$}.
$$
  C'est une équation de la forme $$-h_{jk}^{abcd}\partial^2_{ab}g_{cd}=\text{termes
    d'ordre $\leq 1$ en $g$},$$ avec 
$$h_{jk}^{abcd}=\bar g^{ab}\delta_{jk}^{cd}+\text{termes d'ordre $0$ en $g-\bar g$}.$$
Ce qui est important ici est que $\|g-\bar g\|_{C^0}$ est petit, donc
l'équation est elliptique. Il est alors standard d'en déduire la
régularité de $g$.
\end{proof}

%
%
%
%
%
%
%
%
%

\section{Analyse sur les variétés asymptotiquement hyperboliques}
\label{sec:quatre}

Dans cette section, nous préparons l'analyse de la polyhomogénéité en
introduisant des espaces de sections polyhomogènes et en analysant
l'opérateur indiciel.

Soit $g$ une métrique asymptotiquement hyperbolique définie sur une
variété $M$  difféomorphe à une demi-boule. Les dilatations $h_t$ de la section précédente 
fournissent une métrique hyperbolique de référence $g_h$ dans $M$, induite par
$\gamma_h = \lim_{t\to 0} h_t^* \gamma$. Nous disposons alors des métriques $g_h$, $g_0$
et $g$ sur la demi-boule. Pour éviter de considérer l'analyse sur la demi-boule $B_+$ (et donc d'avoir à
introduire une condition au bord sur le bord intérieur), nous
recollerons $g$ (ou $g_0$) avec la métrique hyperbolique sur la demi-boule
opposée.  Plus précisément, l'espace hyperbolique est la réunion $B_+\cup
B_-$, avec $B_+=\{s\geq 0\}$ et $B_-=\{s\leq 0\}$. Nous définissons alors une
famille de métriques $g_t$ sur $\setR H^n$ ou $\setC H^m$ par
\begin{equation}\label{eq:69}
 g_t = \chi(s) h_t^* g + (1-\chi(s)) g_h ,
\end{equation}
où $\chi$ est une fonction de coupure satisfaisant $\chi(s)=1$ pour $s\geq 1$
et $\chi(s)=0$ pour $s\leq 0$ (la même construction peut être appliquée à la métrique $g_0$, ou encore 
à une métrique $\bar g$ telle que dans la section précédente,
conduisant à une famille notée ci-dessous $\bar g_t$). Il est
important de noter que les métriques obtenues sont à courbure
sectionnelle strictement négative si ces recollements sont faits suffisamment près de l'infini, autrement dit
si $t$ est suffisamment petit.

Nous pouvons alors choisir
une demi-boule incluse dans la demi-boule d'origine et y effectuer la construction d'un
représentant asymptotiquement hyperbolique dans la classe conforme du bord, comme à la fin de la
section \ref{sec:newtrois}. C'est 
cette mé­trique que nous noterons désormais $\gamma$.

Nous utiliserons à partir de maintenant, ici comme dans
toute la suite, des coordonnées de type exponentiel, comme dans les
expressions \eqref{eq:24} et \eqref{eq:12} des sections \ref{sec:un}
et \ref{sec:deux}, plus adaptées pour la suite que celles de
l'expression \eqref{eq:35} précédente. Ainsi $M$ est-il identifié à un
produit $[0,+\infty)×\partial_\infty M$, et le terme «~dominant~»
$g_0$ des $g_t$ est de la forme
\begin{equation}
  \label{eq:35again}
 g_0 = g_0(\gamma) = \begin{cases}
    ds^2+e^{2s}\gamma &\text{ dans le cas réel,} \\
    ds^2+e^{2s}\eta^2 + e^{s}\gamma &\text{ dans le cas complexe,}
  \end{cases}
\end{equation}
avec $\gamma = d\eta(\cdot,J\cdot)$ dans le second cas. Rappelons que $\gamma$ est ici la
métrique sur $\partial_\infty M$, au sein de la classe conforme à l'infini, adaptée
à la géométrie de la demi-boule.

\smallskip

Les espaces fonctionnels utiles $C^{k,\alpha}_{\delta_1,\delta_2}$ sont les mêmes que
précédemment, les dérivées étant celles de l'espace hyperbolique
modèle ou celles de la connexion $\nabw$ définie plus bas, et les
normes étant prises relativement à $g$, $g_0$ ou $g_h$, ce qui revient
au même de par le lemme \ref{lemm:connexions} énoncé plus loin, et du
fait que ces trois métriques sont mutuellement bornées.

\smallskip

Nous notons $\dA_{\alpha,\beta}$ l'espace des fonctions polyhomogènes finies 
\emph{de poids au moins} $\alpha\geq 0$ dans la direction radiale, c'est-à-dire des sommes finies 
du type $\sum_{\sigma,\tau} a_{\sigma,\tau}  \, s^{\sigma} e^{-\tau s}$, où 
\begin{itemize}
\item les $a_{\sigma,\tau}$ sont des sections \emph{lisses sur le bord à l'infini} (autrement dit 
constantes en $s$ dans la trivialisation $[s_0,+\infty[× \partial M$) et vivant le long des tranches
dans un espace à simple poids, dont la valeur est fixée égale à $\beta>0$~;
\item les $\sigma$ et $\tau$ sont des réels~;
\item tous les poids $\tau$ rencontrés dans la somme sont supérieurs ou égaux à $\alpha$.
\end{itemize} 
Cette définition s'étend immédiatement à des sections de fibrés naturels à condition de prendre les coefficients 
$a_{\sigma,\tau}$ \emph{parallèles} selon $\partial_s$ (en raison de la croissance exponentielle de la métrique,
la trivialisation par les sections radialement parallèles diffère de la trivalisation en coordonnées d'une puissance
de $e^s$).

Nous introduisons également les sous-espaces $\cA_{\alpha,\beta}$
formé des éléments sans puissances polynômiales (c'est-à-dire 
que l'on a nécessairement $\sigma=0$) et $\dA[\alpha]_{\beta}$ formé des sommes 
$$ \sum_{\sigma} a_{\tau}  \, s^{\sigma} e^{-\tau s}  $$
dont le comportement en norme est exactement égal à $e^{-\alpha s}$ (formellement, il s'agit donc du quotient
$\dA_{\alpha,\beta}/\cup_{\alpha'>\alpha}\dA_{\alpha',\beta}$ mais nous nous passerons de cette subtilité ici). 
Si $u\in\dA_{\alpha,\beta}$, on notera $[u]_{\alpha}$ sa composante de poids $\alpha$, c'est-à-dire dans 
$\dA[\alpha]_{\beta}$.

Enfin, nous dirons qu'une fonction (ou une section) $u$ est \emph{polyhomogène}
s'il existe une série formelle  $\sum_{\sigma,\tau} a_{\sigma,\tau}  \, s^{\sigma} e^{-\tau s}$ telle que
pour tout $\delta>0$, il existe une somme finie  $\sum_{|\tau|\leq N} a_{\sigma,\tau}  \, s^{\sigma} e^{-\tau s}$
telle que 
$$ u - \sum_{|\tau|\leq N} a_{\sigma,\tau}  \, s^{\sigma} e^{-\tau s} \ \in \ C^{\infty}_{\delta,\beta}.$$
De fa\c{c}on équivalente, nous dirons que $u$ appartient à $\dA_{\alpha,\beta}$ \emph{modulo} $C^{\infty}_{\infty,\beta}$. 
Cette définition dépend de la valeur de $\beta$, mais dans la suite $\beta$
sera fixé et il n'y aura donc pas d'ambiguïté sur la définition. 

\smallskip

Nous pouvons maintenant passer à l'analyse dans le cadre asymptotiquement hyperbolique. 
Notre point de départ est une étude précise du terme dominant de la connexion de Levi-Civita de la métrique $g_0$.
 
\begin{defi} On peut étendre la connexion de Levi-Civita de $\gamma$ dans
  le cas réel, resp. de Tanaka-Webster dans le cas complexe,
  en une connexion $\nabw$ à l'intérieur de $M$,
  unitaire pour $g_0$, définie dans le cas réel par
$$ \nabw\partial_s =0, \ \nabw_{\partial_s}(e^{-s}h) = 0 $$
pour tout champ de vecteur $\gamma$-unitaire $h$ sur le bord, et dans le cas complexe par
$$ \nabw\partial_s =0, \ \nabw_{\partial_s}(e^{-s}R) = 0, \ \nabw_{\partial_s}(e^{-\tfrac{s}{2}}h) = 0$$
pour tout champ de vecteur $\gamma$-unitaire $h$ dans le noyau de $\eta$ sur le bord et où $R$ désigne 
le champ de Reeb de la forme de contact $\eta$.
\end{defi}

Les faits suivants sont démontrés dans \cite{BiqHer05} en dimension
$4$ et dans le cas complexe~; il s'obtiennent aisément dans le cas
général en suivant les mêmes idées (ils sont d'ailleurs immédiats dans
le cas réel).

\begin{lemm}\label{lemm:connexions}
  Dans une base $g_0$-orthonormée adaptée $\{\partial_s, e^{-s}h_i\}$ dans le
  cas réel, resp.  $\{\partial_s, e^{-s}R, e^{-s/2}h_i, e^{-s/2}Jh_i\}$ dans le
  cas complexe, la connexion de Levi-Civita de $g_0$ 
  diffère de $\nabw$ par une $1$-forme à valeurs dans les
  endomorphismes de $TM$ de la forme
$$ a = a_0 + a_1 , $$
où $a_0$ est à coefficients \emph{constants} (par rapport à $\nabw$)
et $a_1$ est un élément de $\cA_{1,1}$ dans le cas réel, $\cA_{1/2,1}$
dans le cas complexe.  De même, la courbure est
$$ R = R_0 + R_1,$$ 
où $R_0$ est à coefficients constants, égale à la courbure de l'espace hyperbolique modèle, et $R_1$ est un élément 
de $\cA_{1,1}$ dans le cas réel, $\cA_{1/2,1}$ dans le cas complexe.
\end{lemm}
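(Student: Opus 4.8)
The plan is to compute the Levi-Civita connection $\nabla$ of $g_0$ in the adapted orthonormal frame and to subtract $\nabw$ term by term; the curvature statement then follows from the transformation law for curvature under a change of connection. Throughout, the frame vectors spanning $\partial_\infty M$ are extended to the interior independently of $s$, so that $\nabw$ has radially constant coefficients and, by construction, $\nabw_{\partial_s}=0$ on the frame, $\nabw_{E_i}E_j=\Gamma^k_{ij}E_k$ where $\Gamma$ are the coefficients of $\nabla^\gamma$ (real case) or of Tanaka-Webster (complex case), and $\nabw_{E_i}\partial_s=0$. The meaning of \emph{à coefficients constants} is then precisely: radially $\nabw$-parallel, i.e. $s$-independent in this frame.

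\textbf{Cas réel.} Here $g_0=ds^2+e^{2s}\gamma$ is a warped product with warping $e^s$, and the standard warped-product formulas give $\nabla_{\partial_s}\partial_s=0$, $\nabla_{\partial_s}E_i=0$, $\nabla_{E_i}\partial_s=E_i$ and $\nabla_{E_i}E_j=e^{-s}\Gamma^k_{ij}E_k-\delta_{ij}\partial_s$, the factor $e^{-s}$ arising because $e_i=e^{s}E_i$. Subtracting $\nabw$ yields the $\mathfrak{so}$-valued one-form $a=\nabla-\nabw$ with $a(\partial_s)=0$, $a(E_i)\partial_s=E_i$ and $a(E_i)E_j=-\delta_{ij}\partial_s+(e^{-s}-1)\Gamma^k_{ij}E_k$. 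Writing $e^{-s}-1=-1+e^{-s}$, the radially constant part is $a_0$ and the term $e^{-s}\Gamma^k_{ij}E_k$ is visibly an element of $\cA_{1,1}$; this is $a_1$.

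\textbf{Cas complexe.} This is the substantive computation and the main obstacle. Now $g_0=ds^2+e^{2s}\eta^2+e^{s}\gamma$ is no longer a warped product, and I would compute $\nabla$ from the Koszul formula using the structure of the Tanaka-Webster connection (its compatibility with $\eta$, $d\eta$, $J$ and its torsion). The radial derivatives $\nabla_{\partial_s}E_R$ and $\nabla_{\partial_s}E_i$ vanish exactly, as for $\nabw$, since $R$ and the contact vectors scale as $e^{s}$ and $e^{s/2}$; the shape-operator terms $\nabla_{E_R}\partial_s=E_R$, $\nabla_{E_i}\partial_s=\tfrac12 E_i$, together with the Reeb–contact coupling $g_0(\nabla_{h_i}h_j,R)\sim-\tfrac12 d\eta(h_i,h_j)e^{2s}$, produce radially constant contributions which assemble into $a_0$ and are checked to coincide with the values for the model $\CM H^m$. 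The decay of $a_1$ is governed by the horizontal Tanaka-Webster term: from $\nabla_{E_i}E_j=e^{-s}\nabla_{h_i}h_j$ the horizontal coefficient is $e^{-s}\Gamma^k_{ij}h_k=e^{-s/2}\Gamma^k_{ij}E_k$, since now $h_k=e^{s/2}E_k$. The exponent $e^{-s/2}$, rather than $e^{-s}$, is exactly the anisotropy between the Reeb direction and the contact distribution, and it forces $a_1\in\cA_{1/2,1}$. The crux is to keep exact track of which terms carry $e^{-s/2}$ and to verify that all Reeb–contact couplings land in the radially constant part $a_0$, leaving no slower-decaying term in $a_1$. The dimension-$4$ complex case of these identities is carried out in \cite{BiqHer05}, and the general case follows the same scheme.

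\textbf{Courbure.} In both cases I would deduce the curvature from
\[
 R^{\nabla}=R^{\nabw}+d^{\nabw}a+a\wedge a,\qquad (a\wedge a)(X,Y)=[a(X),a(Y)].
\]
Writing $a=a_0+a_1$, the pieces $d^{\nabw}a_1$, $a_0\wedge a_1$ and $a_1\wedge a_1$ preserve or improve the radial weight (because $\nabw$ is radially parallel), hence lie in $\cA_{1,1}$ (resp. $\cA_{1/2,1}$) and make up $R_1$. The remaining part $R^{\nabw}+d^{\nabw}a_0+a_0\wedge a_0$ has radially constant coefficients. The one apparent danger is that $R^{\nabw}$ carries the full boundary curvature of $\gamma$ (resp. the Tanaka-Webster curvature), which is radially constant and $\gamma$-dependent; but this is cancelled at leading order by the matching $d^{\nabw}a_0+a_0\wedge a_0$ terms, so the boundary curvature reappears only in the decaying remainder $R_1$, and what survives is the constant-curvature model tensor $R_0$. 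Concretely, in the real warped product all mixed components vanish and the horizontal sectional curvature is $K_\gamma e^{-2s}-1$, whose radially constant part is the model value $-1$, confirming $R_1\in\cA_{2,1}\subset\cA_{1,1}$.
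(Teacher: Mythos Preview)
Your approach---direct computation of $\nabla$ in the adapted frame, subtraction of $\nabw$, then the change-of-connection identity $R^\nabla=R^{\nabw}+d^{\nabw}a+a\wedge a$---is exactly the one the paper intends: it gives no proof, referring instead to \cite{BiqHer05} for the complex case in dimension~$4$ and calling the real case immediate and the general complex case analogous.

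There is, however, a bookkeeping slip in your real case that propagates into the curvature discussion. Since $E_i=e^{-s}h_i$, one has $\nabw_{E_i}E_j=e^{-s}\nabw_{h_i}(e^{-s}h_j)=e^{-s}\Gamma^k_{ij}E_k$, with the \emph{same} prefactor as in $\nabla_{E_i}E_j$. The horizontal Christoffel pieces therefore cancel exactly, $a_1=0$, and $a_0$ is genuinely constant (independent of $\gamma$), as the lemma requires. Correspondingly, in the curvature formula no cancellation is needed: in the orthonormal frame one has $R^{\nabw}(E_i,E_j)E_k=e^{-2s}(R^\gamma)^l_{\phantom{l}kij}E_l$, which already decays and sits entirely in $R_1$; the constant model piece $R_0$ comes from the radially constant part of $d^{\nabw}a_0+a_0\wedge a_0$. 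Your description of $R^{\nabw}$ as ``radially constant and $\gamma$-dependent'' confuses components in the $h_i$ frame with components in the $g_0$-orthonormal frame $E_i$. In the complex case the same mechanism applies, the anisotropic scaling $h_k=e^{s/2}E_k$ now giving the weight $e^{-s/2}$ for the Tanaka-Webster contribution to $a_1$, as you correctly identified.
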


\smallskip

Nous considérons maintenant un opérateur différentiel $L = \nabla^*\nabla  + \mathcal{R}$
où $\mathcal{R}$ est un terme de courbure, agissant sur les sections d'un fibré tensoriel naturel $E$.
Bien sûr, nous avons en vue le cas où $L$ est la différentielle en une métrique
$\bar g$ de l'opérateur
$$ g \longmapsto \Ric^{g} + c g + (\delta^{g})^*B_{\bar g}(g). $$
Un premier résultat, élémentaire mais important dans ce contexte, est le suivant : 

\begin{lemm}\label{lemm:Pbis}
  Supposons que, pour la métrique hyperbolique, l'opérateur $L_h$ soit
  inversible dans $L^2$. Alors, pour $t>0$ assez petit, l'opérateur
  $L_t$ pour la métrique $\bar g_t$ est un isomorphisme
  $C^{k,\alpha}_{\delta_1,\delta_2}\to C^{k-2,\alpha}_{\delta_1,\delta_2}$, pour les poids $(\delta_1,\delta_2)$
  satisfaisant les hypothèses du lemme \ref{lemm:P}.
\end{lemm}

\begin{proof}
  Par la seconde estimation de \eqref{eq:45}, on a
  $\|\bar g_t-g_h\|_{C^{k,\alpha}}=O(t)$. Il en résulte que l'opérateur
  $L-L_h$ est petit en norme d'opérateur $C^{k,\alpha}_{\delta_1,\delta_2}\to
  C^{k-2,\alpha}_{\delta_1,\delta_2}$. Puisque les hypothèses assurent
  l'inversibilité de $L_h$ entre ces mêmes espaces, l'opérateur $L$
  lui aussi est inversible pour $t>0$ assez petit.
\end{proof}

Nous noterons désormais $g$, $\bar g$ ou $g_0$ les métriques construites avec un choix de $t$ suffisamment 
petit pour que le résultat du lemme précédent soit valide pour
l'opérateur $L$ considéré.

\smallskip

Il nous faut maintenant analyser plus en détails l'opérateur $L_{g_0}$
bâti à partir de la métrique $g_0$. Dans la décomposition
$M=[0,+\infty[×\partial_\infty M$, on peut trivialiser $E$ le long de chaque rayon
$\setR_+×\{y\}$ en y choisissant une base parallèle pour la connexion
$\nabw$. Dans cette trivialisation, la dérivée $\tilde \nabla_{\partial_s}$
devient la dérivation ordinaire $\partial_s$ (qu'on prendra garde de ne pas
confondre avec la dérivée de Lie $\cL_{\partial_s}$).

La décomposition $\nabla^{g_0}=\nabw + a_0 +a_1$ implique alors que l'opérateur $L_{g_0}$ est asymptote sur 
chaque sous fibré homogène à un opérateur de la forme
$$ -\partial_s^2 - \cH \partial_s + \tilde{A} + \cR_0 + \Ltrans ,$$
où ici tous les termes sont indépendants de $s$ : $\tilde{A}$ rassemble tous
les termes constants d'ordre $0$ dans le laplacien (ce sont les mêmes
que ceux de l'opérateur de l'espace hyperbolique), de même $\cR_0$
contient les termes constants de la courbure (à nouveau, il s'agit
simplement des termes de courbure de l'espace hyperbolique), enfin
$\Ltrans$ contient toutes les dérivées transverses à $\partial_s$
(typiquement, par exemple dans le cas complexe, des termes comme
$-(\nabw_{e^{-s/2}h})^2$).
L'opérateur indiciel de $L$ est alors
$$ -\partial_s^2 - \cH \partial_s + \tilde{A} + \cR_0 . $$

\smallskip

Deux opérateurs intégraux, définis pour un couple de réels $(\alpha_+,\alpha_-)$ avec 
$\alpha_+>\alpha_-$, jouent un rôle important dans ce qui suit. Ils sont
définis sur les fonctions par~:
$$  G_{\infty}(u) (s) = 
\frac{1}{\alpha_+-\alpha_-}
\left( e^{-\alpha_- s}\int_{+\infty}^s e^{\alpha_-\varsigma}\, u(\varsigma)\, d\varsigma \ - \ 
 e^{-\alpha_+ s}\int_{+\infty}^s e^{\alpha_+\varsigma}\, u(\varsigma) \, d\varsigma \right) $$
et de
$$  G_{0}(u) (s) = 
\frac{1}{\alpha_+-\alpha_-}
\left( e^{-\alpha_- s}\int_{+\infty}^s e^{\alpha_-\varsigma}\, u(\varsigma) \, d\varsigma \ - \ 
 e^{-\alpha_+ s}\int_{s_0}^s e^{\alpha_+\varsigma}\, u(\varsigma) \, d\varsigma \right), $$
où $s_0$ est un réel positif supposé très grand par la suite. Ils possèdent les propriétés suivantes~:
\begin{enumerate}
\item\label{e1} Si $\alpha>\alpha_+$, l'opérateur $G_{\infty}$ envoie les espaces 
$C^{\infty}_{\alpha,\beta}$, 
$\cA_{\alpha,\beta}$, $\dA_{\alpha,\beta}$ et $\dA[\alpha]_{\beta}$ dans eux-mêmes~;
\item\label{e2} Si $\alpha_-<\alpha\leq \alpha_+$, l'opérateur $G_0$ envoie 
$\dA_{\alpha,\beta}$ dans lui-même, et $C^{\infty}_{\alpha,\beta}$ et 
$\mathcal{A}_{\alpha,\beta}$ dans eux-mêmes seulement si
$\alpha<\alpha_+$ ; en revanche, il y a apparition d'un terme polynômial si $\alpha=\alpha_+$. 
\end{enumerate}
Ces opérateurs sont des inverses \emph{à droite} de
$\partial_s^2 + \mathcal{H}\partial_s - \lambda$ si le couple $(\alpha_-,\alpha_+)$ 
est relié à la constante $\lambda$ par 
$$ \alpha_{±} = \tfrac{\mathcal{H}}{2} ± \sqrt{\tfrac{\mathcal{H}^2}{4} + \lambda}\ .$$

Leur définition s'étend aux sections du fibré tensoriel $E$, en le
trivialisant comme ci-dessus radialement sur $[0,+\infty[×\partial M$. Alors
$\tilde{A}+\cR_0$ est constant pour $\nabw$ le long des rayons, et ses
valeurs propres décomposent $E=\oplus_\lambda E_\lambda$ en la somme de ses
sous-espaces propres.

\smallskip

Notre boîte à outils est complétée par une algèbre d'opérateurs adaptée,
définie dans le cas réel comme $\cQ = \cQ_0 + e^{-s}\cQ_1$, 
où 
\begin{itemize}
\item $\cQ_1$ est l'algèbre engendrée par $\nabw_{\partial_s}$, les $e^{-s}\nabw_H$ 
(la notation $H$ en indice désignant n'importe quelle dérivée dans une direction tangente au bord à l'infini et
indépendante de $s$) et $\dA_{0,0}$~;
\item $\cQ_0$ est le sous-espace vectoriel de $\cQ_1$ ne contenant que des dérivées 
transverses à $\partial_s$ et des fonctions de $\dA_{1,0}$~;
\end{itemize}
et dans le cas complexe comme $\cQ = \cQ_0 + e^{-s/2}\cQ_1$, où 
\begin{itemize}
\item $\cQ_1$ est l'algèbre engendrée par $\nabw_{\partial_s}$, $e^{-s}\nabw_R$, 
les $e^{-s/2}\nabw_h$ ($h$ désignant ici une direction dans $H=\Ker\eta$ toujours indépendante de
$s$) et $\dA_{0,0}$~;
\item $\cQ_0$ est le sous-espace vectoriel de $\cQ_1$ ne contenant que des dérivées 
transverses à $\partial_s$ et des fonctions de $\dA_{1/2,0}$.
\end{itemize}
L'utilité de ces algèbres réside dans deux lemmes :
\begin{lemm}\label{lem:mapQ}
  Soit un opérateur $P\in \cQ$. Alors $P$ envoie $C^{k,\alpha}_{\delta_1,\delta_2}$
  dans $C^{k,\alpha}_{\delta_1-1,\delta_2}$ dans le cas réel,
  resp. $C^{k,\alpha}_{\delta_1-1/2,\delta_2}$ dans le cas complexe.\qed
\end{lemm}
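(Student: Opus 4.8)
The plan is to verify the mapping property on the generators of $\cQ$ and then propagate it through products, using that $\cQ$ sits inside the algebra $\cQ_1$ and that the asserted conclusion is stable under composition. So I would record, for each elementary building block, how it moves the two weights of the spaces $C^{k,\alpha}_{\delta_1,\delta_2}=w^{-1}C^{k,\alpha}$, handling the real and complex cases in parallel.

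The decisive elementary fact is that the logarithmic derivatives of the weight are bounded to all orders. With $w=\cosh(s)^{\delta_1}\cosh(\rho)^{\delta_2}$ in the real case, resp. the weight \eqref{eq:27} in the complex case, the orthogonality of $ds$, $d\rho$ (and $d\tau$) already used in Lemma~\ref{lemm:est-poids} gives that $d\log w$ and all its $\nabw$-derivatives are bounded in a $g_0$-orthonormal frame: the radial logarithmic derivative is $\delta_1\tanh(s)$, resp. $\delta_1\tanh(\tfrac s2)$, and in each unit transverse direction it is $O(1)$ and even decays in $s$. Hence, for any derivation $D$ among the generators $\nabw_{\partial_s}$, $e^{-s}\nabw_H$ (real) resp. $\nabw_{\partial_s}$, $e^{-s}\nabw_R$, $e^{-s/2}\nabw_h$ (complex), writing $f=w^{-1}\phi$ with $\phi=wf\in C^{k,\alpha}$ yields $Df=w^{-1}(D\phi)-(D\log w)\,f$; since $D$ is a $g_0$-bounded derivation and $D\log w$ is bounded together with its derivatives, $Df$ again lies in $w^{-1}C^{k-1,\alpha}$. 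Thus each derivation preserves both weights $(\delta_1,\delta_2)$, lowering only the differentiability index, which is immaterial on the smooth sections to which the lemma is applied.

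Multiplication by a coefficient $a$ — in $\dA_{0,0}$ inside $\cQ_1$, in $\dA_{1,0}$ (real) resp. $\dA_{1/2,0}$ (complex) inside $\cQ_0$ — has vanishing slice weight $\beta=0$, hence is bounded along the slices and preserves $\delta_2$; radially it is a finite sum $\sum a_{\sigma,\tau}\,s^{\sigma}e^{-\tau s}$ with all $\tau\geq 0$, whose only growth is the sub-exponential factor $s^{\sigma}$, and $s^{\sigma}=O(\cosh s)$, resp. $O(\cosh\tfrac s2)$, for each fixed $\sigma$. Therefore every generator preserves $\delta_2$ and either preserves $\delta_1$ (the derivations) or changes it only through this polynomial factor, while the damping factors (the overall $e^{-s}$ in $e^{-s}\cQ_1$, resp. $e^{-s/2}$, and the $\dA_{1,0}$- resp. $\dA_{1/2,0}$-coefficients in $\cQ_0$) strictly improve $\delta_1$. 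Composing over a general $P=P_0+e^{-s}P_1\in\cQ$ (resp. $P_0+e^{-s/2}P_1$) then keeps $\delta_2$ fixed and costs at most one unit (resp. one half) of $\delta_1$, exactly the room needed to absorb the polynomial factors, which is the assertion. The single point I would check with care, the rest being bookkeeping, is the all-orders boundedness of the logarithmic derivatives of the two-weight function $w$ in the adapted frame, i.e. that $w$ is an admissible weight, since it is this that guarantees the covariant derivatives never degrade the two-weight structure.
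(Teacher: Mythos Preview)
The paper gives no proof: the statement is followed immediately by \qed, signalling that the authors regard it as an immediate consequence of the definitions. Your proposal supplies precisely the natural verification and is correct. The generating derivations of $\cQ_1$ are $g_0$-bounded, and the formulas for $w$ show that $d\log w$ together with all its higher $\nabw$-derivatives is bounded in the adapted orthonormal frame; hence each derivation preserves both weights. The polyhomogeneous coefficients in $\dA_{0,0}$ contribute at worst a polynomial factor $s^\sigma$, which is $O(e^{\eps s})$ for every $\eps>0$ and is therefore comfortably absorbed into the one-unit (resp.\ half-unit) slack allowed by the statement, while the slice weight $\delta_2$ is untouched since these coefficients carry slice-weight $0$.

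One small point worth flagging explicitly: taken literally, no differential operator of positive order can preserve the regularity index $k$, so the statement must be read modulo a finite loss of derivatives (equivalently, on $C^\infty$ sections). You already note this, and it is indeed how the lemma is used throughout the paper.
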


\begin{lemm}
Dans le cas réel, $[\nabw_{\partial_s},\nabw_H]=0$ et $e^{-s}[\nabw_{H},\nabw_{H}]\in\cQ$~; en 
conséquence $[L_{g_0},\nabw_H] \in \cQ$. 

Dans le cas complexe, $[\nabw_{\partial_s},\nabw_H]=[\nabw_{\partial_s},\nabw_R] =0$,
$[\nabw_{e^{-s}R},\nabw_H]\in \cQ$, et pour des champs de vecteurs $h_1$ et $h_2$ $\gamma$-unitaires 
dans $H$, 
$$
[\nabw_{e^{-s/2}h_1},\nabw_{h_2}]= -d\eta(h_1,h_2)\,\nabw_{e^{-s/2}R} \ \textrm{ mod. } \ \cQ.
$$
En conséquence, $[L_{g_0},\nabw_R] \in \cQ$, et 
$[L_{g_0},\nabw_h] = -2\nabw_{e^{-s/2}Jh}\nabw_{e^{-s/2}R}$ mod. $\cQ$.
\end{lemm}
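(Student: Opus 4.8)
The plan is to base everything on the curvature identity $[\nabw_X,\nabw_Y]=R^{\nabw}(X,Y)+\nabw_{[X,Y]}$ and then to track weights in $\cQ$. First I would treat the radial commutators. By construction the adapted frame is parallel along $\partial_s$ for $\nabw$, so the connection form of $\nabw$ in this frame has no $ds$-component; moreover its slice components coincide with those of the fixed boundary connection (Levi-Civita de $\gamma$, resp. Tanaka--Webster), hence are $s$-independent. Consequently $R^{\nabw}(\partial_s,\cdot)=0$, and since the boundary fields $H$, $R$ commute with $\partial_s$, the identity yields immediately $[\nabw_{\partial_s},\nabw_H]=0$ and, in the complex case, $[\nabw_{\partial_s},\nabw_R]=0$.

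Next the transverse commutators. For $s$-independent boundary fields one has $R^{\nabw}(X,Y)\in\dA_{0,0}$ bounded and $[X,Y]$ again such a field. In the real case $e^{-s}[\nabw_H,\nabw_H]=e^{-s}R^{\nabw}+e^{-s}\nabw_{[H,H]}$, where the first summand lies in $\dA_{1,0}$ and the second is $e^{-s}$ times the transverse generator $\nabw_{[H,H]}$, so both lie in $\cQ_0\subset\cQ$. In the complex case the same computation gives $[\nabw_{e^{-s}R},\nabw_H]\in\cQ$: decomposing $[R,H]$ along the Reeb and contact directions and using $\iota_Rd\eta=0$ kills the Reeb component $\eta([R,H])=-d\eta(R,H)$, leaving only contact derivatives and curvature with $e^{-s}$-coefficients. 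The genuinely new feature is the contact relation: in $[\nabw_{e^{-s/2}h_1},\nabw_{h_2}]=e^{-s/2}R^{\nabw}(h_1,h_2)+e^{-s/2}\nabw_{[h_1,h_2]}$ the Reeb part of the bracket, governed by $\eta([h_1,h_2])=-d\eta(h_1,h_2)$, produces exactly $-d\eta(h_1,h_2)\nabw_{e^{-s/2}R}$, while the curvature term and the contact part of the bracket carry an extra $e^{-s/2}$ and lie in $\cQ_0$.

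I would then deduce the $L_{g_0}$-commutators from $L_{g_0}=-\nabw_{\partial_s}^2-\cH\nabw_{\partial_s}+\tilde A+\cR_0+\Ltrans$ modulo $\cQ$, where $\tilde A+\cR_0$ is $\nabw$-parallel and $\Ltrans=-\sum_a(\nabw_{e^{-s/2}f_a})^2$ up to lower-order transverse terms. The radial part commutes with $\nabw_H$ and $\nabw_R$ by the first step, and $[\tilde A+\cR_0,\cdot]=0$ by parallelism. Since no bracket produces a Reeb direction against $\nabw_R$, the term $[\Ltrans,\nabw_R]$ is made of $e^{-s}$ times second-order transverse operators and lies in $\cQ$, giving $[L_{g_0},\nabw_R]\in\cQ$; the real case is identical and yields $[L_{g_0},\nabw_H]\in\cQ$ with no surviving term, because there $[\nabw_H,\nabw_H]$ is purely first order. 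Against $\nabw_h$ the leading contribution comes from $[-\sum_a(\nabw_{e^{-s/2}f_a})^2,\nabw_h]$: the contact relation and symmetrisation give a second-order part $-2\big(\sum_a(-d\eta(f_a,h))\nabw_{f_a}\big)e^{-s}\nabw_R$, and $\gamma=d\eta(\cdot,J\cdot)$ yields $\sum_a(-d\eta(f_a,h))f_a=Jh$, whence the stated $-2\nabw_{e^{-s/2}Jh}\nabw_{e^{-s/2}R}$.

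The hard part is the weight bookkeeping in this last step. The curvature terms and the contact first-order remainders always carry an extra $e^{-s/2}$ and fall into $\cQ_0$ automatically, but the first-order Reeb remainders --- typically $\big(\sum_a\nabw_{f_a}d\eta(f_a,h)\big)e^{-s}\nabw_R$ --- preserve the radial weight $\delta_1$ and are therefore not in $\cQ$ by themselves. Proving the stated congruence requires showing these are exactly cancelled by the commutator of $\nabw_h$ with the genuine first-order part of $\Ltrans$, namely the $\nabw_{\nabla_{e_a}e_a}$ piece of the Bochner Laplacian together with the constant correction $a_0$ of Lemme~\ref{lemm:connexions}. More structurally, one must check that $\ad_{\nabw_h}$ and $\ad_{\nabw_R}$ preserve $\cQ$, so that the $\cQ$-corrections absorbed into $L_{g_0}$ stay in $\cQ$; this stability follows from the derivation property of the commutator together with the relations just established, provided the distinct Reeb ($e^{-s}$) and contact ($e^{-s/2}$) scalings are tracked throughout.
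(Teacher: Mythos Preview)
Your approach via the curvature identity $[\nabw_X,\nabw_Y]=R^{\nabw}(X,Y)+\nabw_{[X,Y]}$ is exactly right, and is what the paper intends: its own proof is merely ``facile dans le cas r\'eel, voir \cite{BiqHer05} dans le cas complexe''. Your first three paragraphs carry out that computation correctly --- the vanishing radial commutators, the contact bracket relation $\eta([h_1,h_2])=-d\eta(h_1,h_2)$ producing the leading Reeb term, and the identification $\sum_a(-d\eta(f_a,h))f_a=Jh$ from $\gamma=d\eta(\cdot,J\cdot)$ yielding the stated $-2\nabw_{e^{-s/2}Jh}\nabw_{e^{-s/2}R}$.

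The difficulty you flag in the last paragraph, however, is not there. A term of the form $c\cdot e^{-s}\nabw_R$ with $c$ bounded on the boundary \emph{is} in $\cQ_0\subset\cQ$: by definition $e^{-s}\nabw_R$ is one of the transverse generators of $\cQ_1$, and $\cQ_0$ is the subspace of $\cQ_1$ built from transverse derivatives only --- the restriction to $\dA_{1/2,0}$ applies solely to the purely zeroth-order (multiplication) part, not to the coefficients of the derivatives. So your ``first-order Reeb remainders'' fall directly into $\cQ$ and require no cancellation. Your related stability claim is also slightly off: $\ad_{\nabw_h}$ does \emph{not} preserve $\cQ$, precisely because of the leading piece $-d\eta(h',h)\nabw_{e^{-s/2}R}\notin\cQ$ in $[\nabw_{e^{-s/2}h'},\nabw_h]$. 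What actually saves the argument for the error $E=L_{g_0}-(\opI+\Ltrans)$ is that $E$ already carries an extra $e^{-s/2}$ coming from $a_1\in\cA_{1/2,1}$, so that $[E,\nabw_h]$ picks up the missing factor and lands back in $\cQ_0$.
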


\begin{proof} 
Facile dans le cas réel, voir \cite{BiqHer05} dans le cas complexe en dimension $4$, dont la preuve s'étend presque mot
pour mot au cas général. 
\end{proof}

Le lemme reste vrai si $L_{g_0}$ est remplacé par tout autre opérateur dans ${\mathcal Q}$ et 
sera appliqué plus loin à $L_g$ pour les métriques $g$ telles que $g-g_0$ soit polyhomogène (donc $L_g-L_{g_0}$ sera
essentiellement dans $\cQ$).

%
%
%
%
%
%
%
%
%

\section{Décroissance des dérivées transverses}
\label{sec:newcinq}

Comme précédemment, nous considérons ici une métrique asymptotiquement
hyperbolique $g$ définie sur une demi-boule.

\begin{conv} 
  Toutes les fonctions et sections qui apparaissent dans la suite,
  définies \emph{a priori} sur un voisinage du bord à l'infini, seront
  étendues à l'es-pace tout entier par le procédé de troncature
  suivant~: choisissant une fois pour toutes une fonction lisse $\chi$
  d'une variable telle que $\chi(s)=0$ si $s\leq s_0/2$ et $\chi(s)=1$ si $s\geq
  s_0$ ($s_0$ suffisamment grand), si $u$ est une section définie sur
  la demi-boule et appartenant à l'espace fonctionnel à double poids
  $C^{k,\alpha}_{\delta_1,\delta_2}$ de cette demi-boule, $\varphi u$ est définie sur
  l'espace tout entier et appartient à l'espace fonctionnel de mêmes
  poids défini sur l'espace tout entier. Ce dernier fait est une
  conséquence du lemme \ref{lemm:uniform}.

Cette convention sera en vigueur dans toute cette section comme dans la suivante, 
et nous ne distinguerons pas dans les notations
les sections initialement définies au voisinage de l'infini de leurs versions tronquées et étendues.
\end{conv}

Nous considérons toujours un opérateur (issu de la métrique $g$) du type $L=\nabla^*\nabla +
\mathcal{R}$ agissant sur les sections d'un fibré tensoriel naturel
$E$, et nous supposerons que l'opérateur hyperbolique modèle $L_h$ est
inversible. Après recollement de la métrique $g$ avec la métrique
hyperbolique sur l'autre demi-boule (voir le début de la section \ref{sec:quatre}) et 
choix d'une valeur de $t$ suffisamment petite,
le Lemme \ref{lemm:Pbis} implique immédiatement le résultat suivant, que nous appellerons \emph{lemme de 
régularité}~:

\begin{lemm} \label{lemm:regularite}
Soit $\ell\geq k \geq 2$, $\alpha\in]0,1[$. Si une section $u$ 
vérifie $$  u \in C^{k,\alpha}_{\delta',\beta} \ \textrm{ et } \ L(u) \in C^{\ell-2,\alpha}_{\delta,\beta},$$
où $\mu_-\leq 0< \delta' \leq \delta <\mu_+$ et $(\delta,\beta)$ vérifient les conditions du lemme \ref{lemm:P}, 
alors nécessairement $u \in C^{\ell,\alpha}_{\delta,\beta}$.
\end{lemm}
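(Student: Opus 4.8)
La stratégie est une variante du schéma classique de régularité elliptique à poids pour les opérateurs géométriques du type laplacien sur une variété asymptotiquement hyperbolique, combinée avec un argument de « bootstrap » sur le poids radial $\delta'$. L'idée est que l'hypothèse $u\in C^{k,\alpha}_{\delta',\beta}$ nous donne un point de départ avec un poids $\delta'$ potentiellement faible (voire nul, puisque seule la condition $\mu_-\leq 0 <\delta'$ est exigée), et que l'on souhaite à la fois augmenter la régularité de $k$ jusqu'à $\ell$ et améliorer le poids de $\delta'$ jusqu'à $\delta$, en exploitant l'information $L(u)\in C^{\ell-2,\alpha}_{\delta,\beta}$.

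**Les étapes.**
D'abord, je commencerais par l'étape de régularité en $k$ à poids fixé. Après recollement de $g$ avec la métrique hyperbolique sur la demi-boule opposée et choix de $t$ assez petit (ce qui est garanti par le début de la section \ref{sec:quatre}), le lemme \ref{lemm:Pbis} fournit l'inversibilité de $L_t\colon C^{k,\alpha}_{\delta_1,\delta_2}\to C^{k-2,\alpha}_{\delta_1,\delta_2}$ pour les poids admissibles. La régularité elliptique locale standard (en chaque carte, dans des boules pour la métrique $\bar g$, de taille bornée) permet alors de gagner deux ordres de dérivation : si $u\in C^{2,\alpha}_{\delta',\beta}$ et $L(u)\in C^{\ell-2,\alpha}$, alors $u$ hérite localement de deux dérivées supplémentaires que le membre de droite, donc $u\in C^{\ell,\alpha}$ à poids $\delta'$ au moins. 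La difficulté est que ceci fournit \emph{a priori} seulement $u\in C^{\ell,\alpha}_{\delta',\beta}$, avec le poids initial $\delta'$ et non le poids amélioré $\delta$.

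Ensuite vient l'étape cruciale d'amélioration du poids. Pour passer de $\delta'$ à $\delta$, j'utiliserais l'inversibilité globale de $L_t$ entre espaces à poids $\delta$ fournie par le lemme \ref{lemm:Pbis} (valable sous les conditions du lemme \ref{lemm:P} sur $(\delta,\beta)$). Puisque $\mu_-\leq 0 <\delta' \leq \delta <\mu_+$, les deux poids $\delta'$ et $\delta$ se trouvent dans l'intervalle critique $]\mu_-,\mu_+[$ où $L_t$ est un isomorphisme. Je considérerais $v=L^{-1}(L(u))$ calculé dans l'espace à poids $\delta$ : comme $L(u)\in C^{\ell-2,\alpha}_{\delta,\beta}$, on obtient $v\in C^{\ell,\alpha}_{\delta,\beta}$. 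Il reste à identifier $v$ avec $u$. Or $L(u-v)=0$ et $u-v\in C^{k,\alpha}_{\delta',\beta}\subset C^{k,\alpha}$, un élément du noyau. Par injectivité de $L_t$ sur l'espace à poids $\delta'$ (qui est aussi dans l'intervalle critique), on conclut $u=v$, donc $u\in C^{\ell,\alpha}_{\delta,\beta}$.

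**L'obstacle principal.**
Le point délicat n'est pas le gain de régularité elliptique, entièrement standard et déjà mis en place dans l'esprit du lemme \ref{lem:reg-locale}, mais bien l'\emph{unicité} permettant l'identification $u=v$ ainsi que la compatibilité des deux résolutions à poids différents. Il faut s'assurer que l'inverse $L^{-1}$ à poids $\delta$ et celui à poids $\delta'$ coïncident sur leur domaine commun, ce qui repose sur le fait que tout l'intervalle $[\delta',\delta]$ est inclus dans $]\mu_-,\mu_+[$ — garanti précisément par l'hypothèse $\mu_-\leq 0<\delta'\leq\delta<\mu_+$, qui interdit la traversée d'un poids critique entre $\delta'$ et $\delta$. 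C'est cette absence de poids critique dans l'intervalle qui assure l'absence de termes résiduels (au sens des résidus de l'opérateur indiciel de la formule \eqref{eq:34}) et fait fonctionner le bootstrap sans perte.
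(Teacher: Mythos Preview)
Your proposal is correct and follows exactly the route the paper takes: the article states that the lemma is an \emph{immediate} consequence of Lemme~\ref{lemm:Pbis} after the recollement with the hyperbolic metric, and your argument is precisely the unpacking of that implication --- inverting $L$ at weight $\delta$ to produce $v$, then using injectivity at weight $\delta'$ (which also lies in $]\mu_-,\mu_+[$) to identify $u=v$. Your first step on local elliptic regularity at fixed weight is in fact superfluous, since the isomorphism of Lemme~\ref{lemm:Pbis} already delivers both the gain in derivatives and the improvement of weight in a single stroke.
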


L'objectif de ce paragraphe est de démontrer que le lemme de régularité entraîne un résultat 
de décroissance des dérivées transverses. Afin d'énoncer plus facilement le résultat, nous appellerons 
{\it opérateur polyhomogène au moins quadratique} (d'ordre $2$) un opérateur $q$ tel que
\begin{enumerate}
\item il existe
une série formelle 
$$\sum_{i\geq 2} P_i$$ 
où chaque $P_i$ est un polynôme de degré $i$ en  
$$Id, \ \partial_s , \ e^{-s}\nabw_H, \ \partial_s^2, \ e^{-2s}(\nabw_H)^2, \ e^{-s}\nabw_H\partial_s $$
dans le cas réel, et en
$$
Id, \ \partial_s , \ e^{-s/2}\nabw_H, \ e^{-s}\nabw_R, \ \partial_s^2, \ e^{-2s}(\nabw_R)^2,
\ e^{-s}(\nabw_H)^2,   $$
$$ 
e^{-s}\nabw_R\partial_s, \ e^{-s/2}\nabw_H\partial_s, \ e^{-3s/2}\nabw_R\nabw_H
$$ 
dans le cas complexe, à coefficients polyhomogènes dans $\dA_{0,0}$~;
\item pour tout $\delta>0$, il existe $N\geq 2$ tel que pour tout $u$ dans $C^{k+2,\alpha}_{\delta_1,\delta_2}$,
$$q(u) -  \sum_{2\leq i\leq N} P_i(u)  \ \in \ C^{k,\alpha}_{\delta_1+\delta,\delta_2}.$$
\end{enumerate}
Un exemple typique d'un tel opérateur est la différence entre l'opérateur
$$ g \longmapsto \Ric^{g} + (\delta^{g})^*B_{\bar g}(g) - \Ric^{\bar g} $$
agissant sur les métriques asymptotiquement hyperbolique et sa différentielle en la métrique 
$\bar g$ supposée polyhomogène et de même comportement dominant que $g$. Ce fait se constate facilement par exemple 
sur les expressions explicites de la courbure de Ricci et de l'opérateur de 
Bianchi données dans la section \ref{sec:trois}. Par exemple, on peut écrire tout terme de la 
forme (matricielle) $g^{-1}\partial^2g$ comme  (en notant $g=\bar g +r$)~:
\begin{align*}  
(\bar g + r)^{-1} \partial^2(\bar g + r) & = \bar g^{-1}\partial^2\bar g 
+ \left( \bar g^{-1}\partial^2r -  \bar g^{-1}r\bar g^{-1}\partial^2\bar g\right)\\
& \ \ \ \ +   \bar g^{-1}r\bar g^{-1}\partial^2r 
+ \bar g^{-1}\sum_{k=2}^{\infty}(-1)^k(r\bar g^{-1})^k\partial^2(\bar g+r).
\end{align*}
La différence avec la partie d'ordre inférieur ou égal à $1$ (vue comme agissant sur 
$r$) est donc
$$  \bar g^{-1}r\bar g^{-1}\partial^2r 
+ \bar g^{-1}\sum_{k=2}^{\infty}(-1)^k(r\bar g^{-1})^k\partial^2(\bar g+r) $$ 
et il est clair que cet opérateur vérifie toutes les hypothèses nécessaires pour être qualifié
d'opérateur polyhomogène au moins quadratique : pour tout poids $\delta$, il s'écrit bien comme la somme
d'une partie polynômiale à coefficients polyhomogènes en le $2$-jet de $r$ et d'un terme vivant dans un espace
de poids $\delta$ en la direction radiale (tronquer la somme et le développement de l'inverse de $\bar g$ à 
un ordre suffisamment élevé). Les autres termes apparaissant dans l'opérateur se traitent évidemment de la même façon.

\begin{prop}\label{prop:transverse}
Soit $k\geq 2$, $\alpha\in ]0,1[$ et $(\delta_1,\delta_2)$ vérifiant les conditions du lemme \ref{lemm:P}, 
et on suppose
que le premier poids critique supérieur $\mu_+$ de $L$ est supérieur ou égal à $1$. Si 
$u\in C^{k,\alpha}_{\delta_1,\delta_2}$ est solution de 
$$ Lu = \ell(u) + q(u) + f$$
où 
\begin{itemize}
\item $\ell$ est un opérateur différentiel linéaire à coefficients polyhomogènes 
qui appartient à l'algèbre $e^{-s}\mathcal{Q}_1$ dans le cas réel, $e^{-s/2}\mathcal{Q}_1$ dans le cas complexe~;
\item $q$ est un opérateur polyhomogène au moins quadratique~;
\item et $f$ est une section de $\dA_{\mu_++\eta,\delta_2}$ (pour un $ \eta>0$) modulo $C^{\infty}_{\infty,\delta_2}$,
\end{itemize} 
alors pour tout $\delta<1$ dans le cas réel et $\delta<1/2$ dans le cas complexe et pour tout opérateur différentiel 
linéaire $Q$ dans l'algèbre $\mathcal{Q}$,
\begin{itemize}
\item $Q(\nabw_H)^ku \in C^{0}_{\mu_++\delta,\delta_2}$ pour tout $k\in\NM$ dans le cas réel~;
\item $Q(\nabw_H)^k(\nabw_R)^{k'}u \in C^{0}_{\mu_++\delta,\delta_2}$ pour tous $(k,k')\in\NM^2$ dans 
le cas complexe.
\end{itemize}
\end{prop}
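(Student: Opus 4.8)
The plan is to first push the decay of $u$ up to the first upper critical weight $\mu_+$, and then to propagate this decay to all transverse derivatives by an induction resting on the commutation relations, the operator $Q$ supplying the final gain past $\mu_+$.

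\emph{Step 1: improving the decay of $u$.} Starting from $u\in C^{k,\alpha}_{\delta_1,\delta_2}$ with $0<\delta_1<\mu_+$ (the conditions of Lemma \ref{lemm:P}), I would examine the right-hand side of $Lu=\ell(u)+q(u)+f$. Since $\ell\in e^{-s}\cQ_1$ gains one unit of radial weight, $q$ is at least quadratic, and $f$ lies in $\dA_{\mu_++\eta,\delta_2}$, each of these terms decays strictly faster than $u$ as long as $\delta_1<\mu_+$, so that $Lu\in C_{\min(\delta_1+1,\,2\delta_1,\,\mu_++\eta),\,\delta_2}$. The regularity lemma \ref{lemm:regularite} then turns this decay of $Lu$ into decay of $u$, as long as one stays strictly below the threshold $\mu_+$; iterating, one reaches $u\in C^{\ell,\alpha}_{\mu_+-\eps,\delta_2}$ for every $\ell$ and every $\eps>0$. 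In particular $u$ is smooth and decays up to the critical weight.

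\emph{Step 2: induction on the transverse derivatives.} Writing $v=(\nabw_H)^k u$ (with, in the complex case, an arbitrary number of Reeb derivatives interspersed), I would prove by induction on the total number of transverse derivatives that $v\in C_{\mu_+-\eps,\delta_2}$ for every $\eps>0$, carrying along the stronger fact that $v$ solves an equation $Lv=\ell(v)+r$ whose remainder $r$ has weight strictly greater than $\mu_+$. The base case is Step 1, with $r=q(u)+f$ (past $\mu_+$, since $2(\mu_+-\eps)>\mu_+$). For the inductive step one applies $\nabw_H$, through the identity
$$ L(\nabw_H v)=[L,\nabw_H]v+\nabw_H\big(\ell(v)+r\big). $$
The commutation lemma gives $[L,\nabw_H]\in\cQ$ (in the complex case up to a term $-2\nabw_{e^{-s/2}Jh}\nabw_{e^{-s/2}R}$, absorbed thanks to the Reeb derivatives kept in the induction), whence $[L,\nabw_H]v\in\cQ(v)$ has radial weight exceeding $\mu_+$ by Lemma \ref{lem:mapQ}. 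Commuting $\nabw_H$ through $\ell$, and noting that $\nabw_H r$ also stays past $\mu_+$ (its quadratic part is a product of already-controlled derivatives, and the polyhomogeneous part $f$ keeps its leading radial weight under $\nabw_H$), one rewrites the identity as $L(\nabw_H v)=\ell(\nabw_H v)+r'$ with $r'$ of weight strictly greater than $\mu_+$. Since $\ell$ gains weight, $L-\ell$ has the same indicial operator, hence the same threshold $\mu_+$, and the regularity lemma applies to it: starting from the crude bound $\nabw_H v\in C_{\mu_+-1-\eps,\delta_2}$ (one transverse derivative degrading by one unit the weight of the \emph{already improved} $v$), one climbs one notch up to $\nabw_H v\in C_{\mu_+-\eps,\delta_2}$.

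The statement then follows by applying $Q\in\cQ$: since it improves the radial weight by one unit (resp.\ one half-unit) by Lemma \ref{lem:mapQ}, one gets $Q(\nabw_H)^k u\in C_{\mu_++\delta,\delta_2}$ for every $\delta<1$ (resp.\ $\delta<1/2$). The main obstacle is \emph{crossing the critical threshold} $\mu_+$: the regularity lemma improves decay only strictly below $\mu_+$, so it can never by itself push a transverse derivative beyond it. This is why the induction is arranged so that each new derivative starts from the crude bound $\mu_+-1-\eps$ (resp.\ $\mu_+-\tfrac12-\eps$), which lies in the admissible interval $(\mu_-,\mu_+)$ precisely because of the hypothesis $\mu_+\geq 1$, and only climbs one notch up to $\mu_+-\eps$; the genuine gain past $\mu_+$ is never asked of the regularity lemma but only of the operator $Q$. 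The most delicate point is the book-keeping of the remainder $r$ under successive transverse differentiations, so that it remains of weight greater than $\mu_+$; this rests on the stability of the algebra $\cQ$ under commutation with $\nabw_H$ and $\nabw_R$.
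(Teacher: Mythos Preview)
Your sketch is correct and follows essentially the same strategy as the paper: bootstrap $u$ up to $C^\infty_{\mu_+-\eps,\delta_2}$ via the regularity lemma, then induct on the number of bare transverse derivatives using the commutator identities to keep each $(\nabw_H)^k(\nabw_R)^{k'}u$ at level $\mu_+-\eps$, with the order of the induction (Reeb first, then contact) dictated by the form of $[L,\nabw_h]$ in the complex case.

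Two small remarks. First, your invocation of Lemma~\ref{lem:mapQ} in the last paragraph is a mis-citation: that lemma only says operators in $\cQ$ lose at most one (half-)unit of radial weight, which is the trivial direction. The actual gain past $\mu_+$ does not come from a mapping property of $\cQ$ on general sections; it comes from combining the induction result (the \emph{bare} derivative $(\nabw_H)^{k+1}u$ already sits at $\mu_+-\eps$, not merely $\mu_+-\tfrac12-\eps$) with the explicit $e^{-s/2}$ factor carried by every element of $\cQ=\cQ_0+e^{-s/2}\cQ_1$: for instance $e^{-s/2}\nabw_h\cdot(\nabw_H)^ku\in C_{\mu_++1/2-\eps}$ precisely because $(\nabw_H)^{k+1}u\in C_{\mu_+-\eps}$. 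This is how the paper phrases the conclusion of the base case, and your induction already provides exactly what is needed; only the justification you give for the last line is off.

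Second, absorbing $\ell$ into the left-hand side and applying the regularity lemma to $L-\ell$ in one shot is a legitimate shortcut (since $\ell\in e^{-s/2}\cQ_1$ is a decaying perturbation, $L-\ell$ has the same indicial behaviour and the proof of Lemma~\ref{lemm:Pbis} goes through). The paper instead keeps $\ell(\nabw_R u)$ on the right and bootstraps in two passes---first to $\mu_+-\tfrac12-\eps$, then re-inject to reach $\mu_+-\eps$---which avoids having to revisit the regularity lemma for a modified operator. Both routes work.
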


\begin{proof}
Le raisonnement diverge légèrement selon le caractère réel 
ou complexe de l'espace modèle, en raison de l'existence (ou non) d'une aniso­tropie dans les dérivées transverses. 
Nous nous contentons ici de la preuve dans le cas complexe, plus délicate, le cas réel s'en déduisant facilement. 

\smallskip

On commence par remarquer qu'en vertu du lemme de régularité, une section $u$ vérifiant les hypothèses de l'énoncé 
est en réalité dans $C^{\infty}_{\mu_+-\varepsilon,\delta_2}$ pour tout $\varepsilon>0$, ce qui justifie de 
s'intéresser à l'ensemble de ses dérivées. La suite de la preuve procède par récurrence, de manière analogue 
à ce qui est fait par exemple dans \cite{BiqHer05}~: le principe est de faire une récurrence sur $k+k'$, à l'intérieur
de laquelle s'insère une récurrence sur $k$.

{\flushleft\it Amorce ($k+k'=0$)}.
Il suffit de calculer
$$ L (\nabw_R u) = [ L , \nabw_R ] u + \nabw_R (Lu) .$$
Comme $ [ L , \nabw_R ] \in \mathcal{Q}$, le premier terme du membre de droite est dans $C^{\infty}_{\mu_+-\eps,\delta_2}$
pour tout $\eps>0$. Le second se décompose en 
$$ \nabw_R(Lu) = [\nabw_R, \ell] u + \ell(\nabw_R u) + \nabw_R(q(u)) + \nabw_Rf ;$$
on constate alors que 
\begin{enumerate}
\item comme $f$ est somme d'un terme polyhomogène fini et d'un terme décroissant très rapidement 
au voisinage de l'infini, $\nabw_R f$ est nécessairement 
dans $C^{\infty}_{\mu_+-\eps,\delta_2}$ (on notera ici qu'une dérivée selon $R$ ou $H$ d'un terme polyhomogène n'entraîne
pas de perte de poids---ce fait sera fréquemment utilisé dans la suite)~;
\item $q(u)$ est dans $C^{\infty}_{\mu_+-\eps+1,\delta_2}$ en raison de l'hypothèse sur le développement de Taylor
de la fraction rationnelle $q$ (on utilise ici que $\mu_+\geq 1$)~;
\item $[\nabw_R,\ell]$ est dans $\cQ$ en vertu des résultats de la fin de la section précédente, donc 
$[\nabw_R,\ell]u$ est dans $C^{\infty}_{\mu_+-\eps,\delta_2}$~;
\item\label{item:insuffisant} $\nabw_Ru$ est dans $C^{\infty}_{\mu_+-1-\eps,\delta_2}$ donc $\ell(\nabw_Ru)$ est dans 
$C^{\infty}_{\mu_+-1/2-\eps,\delta_2}$.
\end{enumerate} 
Autrement dit, $L (\nabw_R u)$ est dans $C^{\infty}_{\mu_+-\eps-1/2,\delta_2}$, d'où l'on déduit par le lemme de régularité que 
$\nabw_R u \in C^{\infty}_{\mu_+-\eps-1/2,\delta_2}$. Ce résultat peut être immédiatement réintroduit dans le
point (\ref{item:insuffisant})~: $\ell(\nabw_Ru)$ est alors dans $C^{\infty}_{\mu_+-\eps,\delta_2}$ et le lemme de régularité
fournit l'estimation souhaitée~:
$$\nabw_R u \in C^{\infty}_{\mu_+-\eps,\delta_2}.$$
En ce qui concerne la dérivée selon un vecteur $h$ dans $H$, on a de nouveau
$$ L (\nabw_h u) = [ L , \nabw_h ] u + \nabw_h (Lu) $$
et le terme le plus à droite est traité comme précédemment (noter qu'une dérivée selon $H$ ne fait perdre
qu'un poids égal à $1/2$, ce qui évite l'argument en deux temps utilisé plus haut pour la dérivée selon $R$). 
De plus, 
$$ [ L , \nabw_h ] u = -2\nabw_{e^{-s/2}Jh}\nabw_{e^{-s/2}R} u + Qu \ ; $$
la décroissance du premier terme provient donc de l'estimation obtenue quelques lignes
plus haut sur $\nabw_R u$ et toutes ses dérivées, tandis que le second terme ne pose
pas de problèmes. En appliquant une fois de plus le lemme de régularité, on obtient que
$e^{-s}\nabw_{R} u$ et $e^{-s/2}\nabw_{H} u$ sont dans $C^{\infty}_{\mu_++\delta,\delta_2}$ 
et donc que $Q u \in C^{\infty}_{\mu_++\delta,\delta_2}$ 
pour tout opérateur différentiel linéaire $Q$ dans $\mathcal{Q}$. Une relecture attentive du raisonnement que
nous venons de tenir montre que $\delta$ peut être pris
égal à n'importe quel réel plus petit que $1$ dans le cas réel et que $1/2$ dans le cas complexe.

\smallskip

{\flushleft\it Récurrence}. Donnons nous maintenant deux entiers $k,k'$ avec $k+k'>0$ et supposons que le résultat 
souhaité est connu pour tout couple $(k_1,k_1')$ avec $k_1+k_1'<k+k'$ ou avec $k_1+k_1'=k+k'$
et $k_1<k$. Prenons maintenant $k+k'+1$ champs de vecteurs $\xi_i$ sur le bord à l'infini, égaux
ou bien à un élément de $H$ ou à $R$ et notons $K=k+k'$. Comme précédemment
\begin{equation*}
\begin{split} 
L (\nabw_{\xi_1}\cdots\nabw_{\xi_{K+1}} u)  & = \ [ L ,\nabw_{\xi_1}\cdots\nabw_{\xi_{K+1}}  ] u \ + \ 
\nabw_{\xi_1}\cdots\nabw_{\xi_{K+1}} (Lu) \\
 & = \ \sum_{p=0}^{K+1} \, 
\nabw_{\xi_1}\cdots\nabw_{\xi_{p-1}} \, [ L , \nabw_{\xi_p} ]\,\nabw_{\xi_{p+1}}\cdots\nabw_{\xi_{K+1}} u \\
 & \ \ \ \ \ \ \ \ + \ \nabw_{\xi_1}\cdots\nabw_{\xi_{K+1}} (Lu).
\end{split}
\end{equation*}
Intéressons nous ici au premier terme du second membre~: pour tout $p$,
\begin{enumerate}
\item si $\xi_p=R$, $[L, \nabw_{\xi_p} ]\in \mathcal{Q}$ et on en conclut que
$$ \nabw_{\xi_1}\cdots\nabw_{\xi_{p-1}} \, [ L , \nabw_{\xi_p} ]\,\nabw_{\xi_{p+1}}\cdots\nabw_{\xi_{K+1}} u
\, \in \, C^{\infty}_{\mu_++\delta,\delta_2} $$ 
en utilisant conjointement l'hypothèse de récurrence avec $k_1=k$ et $k_1'=k_1'-1$ et celle avec
$k_1+k_1'=k+k'$ et $k_1=k_1-1$ (noter ici que les commutateurs entre dérivées transverses et éléments de $\mathcal{Q}$ 
ne font apparaître modulo $\mathcal{Q}$ que des termes contenant une dérivée de moins selon $H$ et le même nombre global 
de dérivées transverses, qui sont donc redevables de l'hypothèse de récurrence)~;
\item si $\xi_p\in H$, le commutateur $[L,\nabw_{\xi_p}]$ peut faire apparaître modulo $\mathcal{Q}$ un terme
en dérivées de $R$, qui est bien contrôlé d'après l'hypothèse de récurrence avec
$k_1+k_1'=k+k'$ et $k_1=k_1-1$.
\end{enumerate} 
En conclusion, chacun des termes dans la somme est dans $C^{\infty}_{\mu_++\delta,\delta_2}$, reste à traiter
le dernier terme du second membre. Celui se décompose à nouveau en une somme de termes faisant intervenir successivement
$f$, $q(u)$ et $\ell(u)$. Tous ces termes se traitent comme dans l'amorce (noter que l'hypothèse $\mu_+\geq 1$
joue de nouveau un rôle dans le contrôle du terme en $q(u)$), le terme en $\ell(\nabw_{\xi_1}\cdots\nabw_{\xi_{K+1}}u)$
étant une fois encore le plus embarrassant si $\xi_{K+1}=R$. Le lemme de régularité (appliqué éventuellement deux fois comme
dans l'amorce) assure alors que $\nabw_{\xi_1}\cdots\nabw_{\xi_{K+1}} u$ est dans $C^{\infty}_{\mu_+-\eps,\delta_2}$, et on en tire
immédiatement que toute expression de la forme $Q\nabw_{\xi_1}\cdots\nabw_{\xi_{K}} u$ est dans l'espace
$C^{\infty}_{\mu_++\delta,\delta_2}$ souhaité. 
\end{proof}

On notera que, dans la preuve, on aurait pu se contenter de prouver la
décroissance transverse pour les seules dérivées dans les directions
de $H$, celles dans les directions de $R$ s'en déduisant par
crochet. Cette remarque n'évite pas la nécessité d'estimer au moins
une dérivée selon $R$ puisque le commutateur de $L$ et de $\nabw_H$ en
fait apparaître. Nous avons préféré donner la preuve générale
ci-dessus car elle n'est en réalité pas plus longue.

%
%
%
%
%
%
%
%
%
%

\section{Développement polyhomogène des métriques d'Einstein asymptotiquement hyperboliques}
\label{sec:six}

Nous considérons maintenant une métrique d'Einstein lisse $g$ et
asymptotiquement hyperbolique sur une demi-boule
$$ M = \begin{cases}
    \{x_1^2+\cdots+x_n^2< 1, x_1> 0\} &\text{ dans le cas réel,}\\
    \{\big(x_1+\frac{x_3^2+\cdots+x_n^2}{2}\big)^2+x_2^2<1, x_1> 0\} &\text{ dans le cas complexe,}  
  \end{cases}
$$
de terme dominant à l'infini $g_0 = g_0(\gamma)$, avec $\gamma$ lisse, au sens où 
$$ g - g_0 \in C^{1,\alpha}_{\beta} $$
où $\alpha\in]0,1[$ et $\beta>0$. La valeur de $\beta>0$ sera toujours la même dans toute la suite,
choisie de façon à satisfaire les inégalités du lemme \ref{lemm:est-poids}. Par ailleurs,
nous noterons 
$$
\rho(x)=\begin{cases}
    \sqrt{x_1^2+\cdots+x_n^2} &\text{ dans le cas réel,}\\
    \sqrt{\big(x_1+\frac{x_3^2+\cdots+x_n^2}{2}\big)^2+x_2^2} &\text{ dans le cas complexe.}  
\end{cases}
$$
Quitte à restreindre la demi-boule, nous pouvons alors tronquer et recoller $g$
comme précédemment à la métrique hyperbolique modèle (voir de nouveau le début de
la section \ref{sec:quatre}), de telle sorte qu'elle s'étende en 
une métrique de courbure sectionnelle strictement négative sur le demi-espace $\{x_1>0\}$ tout entier, d'Einstein 
sauf sur un demi-anneau $\{x_1>0, \tfrac{1}{2}<\rho(x)<\tfrac{3}{4}\}$. 
La section \ref{sec:newtrois} produit alors une métrique conforme (que nous continuerons 
à noter $\gamma$, aucune ambiguïté n'étant à craindre) sur le bord à l'infini de la demi-boule 
$N=\{x_1>0, \rho(x)<\tfrac{1}{2}\}$.

Dans les coordonnées adaptées à cette nouvelle métrique, on a alors dans la demi-boule
$$ g - g_0(\gamma) \in C^{1,\alpha}_{\beta,\beta} .$$
Il est alors connu que l'on peut construire dans cette demi-boule une mé­tri­que asymptotiquement 
hyperbolique \emph{polyhomogène} qui est solution approchée des équations d'Einstein 
à un ordre élevé et dont le terme dominant est la métrique $g_0$. Cette 
construction remonte à Fefferman et Graham dans le cas réel \cite{FefGra85,Gra00} et est due à Seshadri 
dans le cas complexe \cite{Ses09}, voir aussi \cite{BiqHer05}. La méthode repose sur la résolution 
d'équations différentielles et s'applique donc tout aussi bien au cas où la métrique sur le bord
$\gamma$ est la métrique complète obtenue à la suite des constructions de la section \ref{sec:newtrois}. Dans ce dernier 
cas la solution approchée obtenue est polyhomogène dans les espaces à double poids définis dans la section
\ref{sec:quatre}. Plus précisément, en notant $\nu_0=n-1$ dans le cas réel et $\nu_0=m$ dans le cas 
complexe~:

\begin{lemm}
Il existe une métrique $\bar\phi$ sur la demi-boule vérifiant~:
\begin{enumerate} 
\item $\bar\phi-g_0 \in \dA_{1,1}$ dans le cas réel, $\dA_{1/2,1}$ dans le cas complexe~;
\item $\Ric^{\bar\phi} + c \bar\phi \in \dA_{\nu,1}$ modulo $C^{\infty}_{\infty,1}$ pour un certain $\nu>\nu_0$~;
\item les termes apparaissant dans le développement de $\bar\phi$ sont des termes exponentiels d'exposants tous 
entiers dans le cas réel et demi-entiers dans le cas complexe, sans termes polynômiaux à l'exception d'un éventuel 
terme en $se^{\nu_0s}$.
\end{enumerate}
\end{lemm}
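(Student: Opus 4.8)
The plan is to build $\bar\phi$ by a formal order-by-order expansion in the radial direction, cancelling the leading Einstein error at each stage by solving the indicial equation, exactly in the spirit of the Fefferman--Graham \cite{FefGra85,Gra00} and Seshadri \cite{Ses09} constructions, but carrying the second weight $\beta$ along the slices throughout. I would start from $g_0$: by Lemma \ref{lemm:connexions} the curvature of $g_0$ is $R_0 + R_1$ with $R_1 \in \cA_{1,1}$ (real) resp. $\cA_{1/2,1}$ (complex), so the initial error $E_0 := \Ric^{g_0} + c\,g_0$ already lies in $\cA_{1,1}$ resp. $\cA_{1/2,1}$. Writing $\bar\phi = g_0 + \sum_j \psi_j$, I would correct inductively: given a partial sum $\phi$ for which $\Ric^{\phi} + c\,\phi$ has leading radial weight $\tau$, I add a homogeneous term $\psi = \Psi\, e^{-\tau s}$ ($\Psi$ parallel along $\partial_s$) chosen so that its linearization cancels that leading error.

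The mechanism of each step reduces to the indicial operator. In the Bianchi gauge the linearization of $g \mapsto \Ric^g + c\,g + (\delta^g)^* B_{\bar g}(g)$ is the geometric operator $L = \nabla^*\nabla + \cR$, whose indicial operator is $\mathcal{I} = -\partial_s^2 - \cH\,\partial_s + \tilde{A} + \cR_0$ (equation \eqref{eq:34}). On a radially homogeneous section it acts as
$$ \mathcal{I}\big(\Psi\, e^{-\tau s}\big) = \big(-\tau^2 + \cH\,\tau + \tilde{A} + \cR_0\big)\Psi\; e^{-\tau s}, $$
so on the $\lambda_i$-eigenspace of $\tilde{A} + \cR_0$ the fibre endomorphism $-\tau^2 + \cH\,\tau + \lambda_i$ is invertible precisely when $\tau$ is not a critical weight $\mu_\pm^{(i)}$. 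Away from critical weights I solve for $\Psi$ purely algebraically, with no factor of $s$. The residual error is then pushed to strictly higher weight: the quadratic remainder of $\Ric$ contributes weight $2\tau$, while the difference $L - \mathcal{I}$, made of the transverse part $\Ltrans$ and of the $a_1, R_1$ corrections, raises the weight by one full step ($1$ in the real case, $\tfrac12$ in the complex case). Since on the model the critical weights $\mu_\pm^{(i)}$ are integers (real) resp. half-integers (complex) --- a direct computation of the eigenvalues of $\tilde{A}+\cR_0$ --- every exponent produced stays integer resp. half-integer, which is conclusion (3).

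The one place a polynomial factor appears is the resonant order $\nu_0 = n-1$ (real) resp. $m$ (complex). There $\mathcal{I}$ has a kernel, so the algebraic equation cannot be solved within pure exponentials; instead I solve $\mathcal{I}(\Psi\, s\, e^{-\nu_0 s}) = \cdots$, the single differentiation in $s$ supplying the missing resonance and producing the unique polynomial term $s\,e^{\nu_0 s}$ of (3). All coefficients up to and including order $\nu_0$ are fixed by a finite jet of the boundary data $\gamma$; the formally indeterminate part at $\nu_0$ reflects the global geometry, so the local construction stops there. Truncating the resulting polyhomogeneous series far enough out (with tail in $C^{\infty}_{\infty,1}$) realises a genuine metric $\bar\phi$ with $\bar\phi - g_0 \in \dA_{1,1}$ resp. $\dA_{1/2,1}$ and $\Ric^{\bar\phi} + c\,\bar\phi \in \dA_{\nu,1}$ modulo $C^{\infty}_{\infty,1}$ for some $\nu > \nu_0$, giving (1) and (2).

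The main obstacle --- and the only genuinely new point over the global constructions --- is to verify that the entire induction stays inside the double-weighted spaces, i.e. that each coefficient keeps the single slice-weight $\beta$ up to the corner $S$. This is where I would concentrate the effort. Inverting $\mathcal{I}$ is harmless, being constant (fibrewise algebraic) along the slices and hence slice-weight neutral; the only danger lies in the right-hand sides, built from lower coefficients through $\Ltrans$ and through the nonlinear terms of $\Ric$. Here I would use that these are assembled from the transverse covariant derivatives $\nabw_H$, which entail no loss of weight on polyhomogeneous data, and from the fixed boundary geometry $\tilde\gamma$, which is asymptotically hyperbolic near $S$ and bounded in the adapted frames; naturality of the Fefferman--Graham/Seshadri recursion then propagates the weight-$\beta$ control --- with $\beta$ chosen to satisfy Lemma \ref{lemm:est-poids} --- uniformly as one approaches $S$. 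Checking this compatibility order by order is the bulk of the argument, the formal recursion itself being classical.
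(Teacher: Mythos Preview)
Your overall strategy --- a formal Fefferman--Graham/Seshadri expansion --- is exactly what the paper invokes: it does not give a detailed proof but simply cites \cite{FefGra85,Gra00,Ses09,BiqHer05} and observes that the ODE-based construction applies equally well when the boundary metric is the complete $\tilde\gamma$ of Section~\ref{sec:newtrois}. Two concrete points in your implementation, however, do not work as written.

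First, the linearization of $g\mapsto\Ric^g+cg$ is \emph{not} the Bianchi-gauged operator $L$: one has $d(\Ric+c)_{g_0}=L-\delta^*B_{g_0}$, and the indicial part of $\delta^*B$ is nonzero. Hence solving $\mathcal{I}_L(\psi)=-[E]_\tau$ does \emph{not} cancel the leading term of $\Ric^\phi+c\phi$: a residual $-\delta^*B(\psi)$ remains at the same exponent $\tau$, and your iteration stalls. The constructions the paper cites avoid this by working in \emph{geodesic} gauge $g=ds^2+g_s$, where the gauge freedom is absorbed into the ansatz and the tangential Einstein equations become a genuine second-order ODE in $s$ for $g_s$; it is the indicial structure of \emph{that} ODE that yields integer (resp.\ half-integer) exponents. (Alternatively one can solve $F(\bar\phi)=0$ formally with $\mathcal{I}_L$ and then recover $\Ric^{\bar\phi}+c\bar\phi\approx0$ via the contracted Bianchi identity, but that step is missing from your sketch.)

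Second, your assertion that the critical weights $\mu_\pm^{(i)}$ of $L$ are all integers (resp.\ half-integers) is false: in the complex case the paper itself computes them in Section~\ref{sec:huit} as $m$, $\tfrac12(m+\sqrt{m^2+8})$, $\tfrac12(m+\sqrt{m^2+2m+5})$ and $m+1$, the middle two generically irrational. Conclusion (3) holds for a different reason: the exponents in the formal series are generated by the initial error $R_1\in\cA_{1,1}$ (resp.\ $\cA_{1/2,1}$) and by the unit (resp.\ half-unit) step size of the recursion, hence remain integral (resp.\ half-integral) regardless of the location of the $\mu_+^{(i)}$; since $\mu_+=\nu_0$ is the only critical weight met before the construction stops at some $\nu>\nu_0$, exactly one polynomial factor appears.
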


Nous pouvons évidemment prolonger $\bar\phi$ sur
la boule tout entière en la recollant avec la métrique hyperbolique.
D'après la section \ref{sec:trois}, il est alors possible de trouver un difféomorphisme $\Phi$ de la
boule induisant l'identité au bord à l'infini de telle sorte que
la métrique tirée en arrière, que nous désignerons par $\phi=\Phi^*g$,
soit solution de
$$  B_{\bar\phi}(\phi) = 0 ,$$
sur la demi-boule $\{x_1>0, \rho(x)<\tfrac{1}{2}\}$.
Par le lemme \ref{lem:reg-locale}, on a alors sur la demi-boule 
$$ \phi - \bar\phi \in C^\infty_{\beta,\beta} $$
pour un certain $\beta >0$.
D'autre part, les équations impliquent immédiatement
$$ F(\phi) = \Ric^{\phi} + c \phi +(\delta^{\phi})^*B_{\bar\phi}(\phi) = 0.$$
avec $c = n-1$ dans le cas réel et $c=\tfrac{m+1}{2}$ dans le cas
complexe.

\smallskip

Puisque $\phi-\bar\phi\in C^{\infty}_{\beta,\beta}$, par le lemme
\ref{lemm:regularite}, on obtient
$$ \phi-\bar\phi \in C^{k,\alpha}_{\nu_0-\varepsilon, \beta} $$
pour tout $\varepsilon>0$, voir par exemple \cite[section 4]{BiqHer05}. 
On notera ici que le premier poids critique supérieur
$\mu_+$ de l'opérateur obtenu comme la différentielle de $F$ (en
n'importe quelle métrique asymptotiquement hyperbolique) n'est autre
que le nombre $\nu_0$ introduit plus haut~\cite{Biq00}, qui est toujours supérieur ou égal à $1$.

\smallskip

Le but de cette section est de prouver l'existence d'un développement polyhomogène de $\phi$. Pour des 
raisons techniques, il est commode d'intro­duire une version précisée de la polyhomogénéité. Nous 
notons à partir de maintenant $L$ la différentielle de $F$ en $g_0$~:

\begin{defi}
Un développement $L$-polyhomogène est un développement polyhomogène de la forme 
$$\sum a_{\sigma,\tau}\, r^{\sigma}\, e^{-\tau r}$$
dont toutes les puissances $\tau$ sont dans le monoïde (additif) $\NM_L$ 
engendré par les poids critiques supérieurs de $L$ et par $1$ dans le cas réel ou par $\tfrac12$
dans le cas complexe.
\end{defi}

L'intérêt de cette définition est que la suite des puissances $\tau$ qui apparaissent dans un
tel développement, une fois ordonnée, tend nécessairement vers l'infini. De manière un 
peu plus précise, on définit la suite de réels positifs 
$$ a_0 = 0, \quad \mu_+ + a_{k+1} = \min \{ \lambda \in \NM_L\ , \ \lambda > \mu_+ + a_{k} \}$$
où $\mu_+$ est le premier poids critique supérieur de $L=dF_{g_0}$ (\emph{cf}. plus haut). Autrement 
dit, la suite $(\mu_++a_k)$ est exactement la suite des puissances de $e^r$ qui apparaissent dans 
un développement $L$-polyhomogène. Par construction, on a donc $\mu_+ + a_k \in\NM_L$, 
$\lim_{k\to\infty} a_k = +\infty$, et $a_{k+1} \leq a_k + 1$ dans le cas réel, resp. $a_{k+1} \leq a_k + \tfrac12$
dans le cas complexe, tous faits utiles pour la suite.

Nous rappelons enfin que nous disposons d'une solution approchée et $L$-poly­ho­mo­gène $\bar{\phi}$,  
telle que $\bar\phi - g_0 \in \dA_{1,\beta}$ dans le cas réel, $\bar\phi-g_0 \in \dA_{1/2,\beta}$ dans le 
cas complexe et 
$$ F(\bar{\phi}) \in \mathcal{A}_{\nu,\beta} \ \textrm{ mod. } C^{\infty}_{\infty,\beta} ,$$
où $\nu>\mu_+=\mu_++a_0$. Le principal résultat technique de cet article est alors le suivant~:

\begin{theo}\label{th-polyhom} Si $\phi$ satisfait toutes les hypothèses précédentes, il existe une suite 
$(\phi_k)$ de solutions approchées $L$-\emph{polyho­mo­gènes} telle que
\begin{enumerate}
\item $\phi_k = \bar\phi + \psi_0 + \cdots + \psi_k$, avec $\psi_k\in\dA[\mu_+ + a_k]_{\beta}$
pour tout $k\in\NM$~;
\item pour tout $k\in\NM$, $F(\phi_k) \in \dA_{\mu_+ + a_{k + 1},\beta}$ modulo $C^{\infty}_{\infty,\beta}$~;
\item en notant $r_k = \phi - \phi_k$, pour tous $k$ et $p$ dans $\NM$, il existe $\delta>0$ 
tel que
$$(\nabemphtrans)^pr_k \in C^0_{\mu_+ + a_k + \delta,\beta} \ $$
\end{enumerate}
où $\nabemphtrans$ désigne toute dérivée dans une direction du bord à l'infini.
\end{theo}

Ce résultat entraîne immédiatement le suivant, qui constitue la version quantitative 
de notre théorème \ref{th:1} de l'introduction.

\begin{theo}\label{th:mainphg}
Soit une demi-boule 
$$ M = \begin{cases}
    \{x_1^2+\cdots+x_n^2< 1, x_1> 0\} &\text{ dans le cas réel,}\\
    \{x_1^2+x_2^2+\big(\frac{x_3^2+\cdots+x_n^2}{2}\big)^2<1, x_1> 0\} &\text{ dans le cas complexe,}  
  \end{cases}
$$ 
et $\gamma$ une métrique lisse sur $\partial_{\infty}M = \{ x_1=0\}\cap\bar M$, resp. $\eta$ une 
structure de contact lisse et $J$ une structure presque complexe lisse dans $H=\Ker\eta$ sur $\partial_{\infty}M$ telles que
$\gamma=d\eta(\cdot,J\cdot)$ soit définie positive. 

Si $g$ est une métrique d'Einstein asymptotiquement hyperbolique réelle, resp. complexe, au sens où  
$g-g_0(\gamma)\in C^{1,\alpha}_{\varepsilon}$ pour $\alpha\in]0,1[$ et $\varepsilon>0$, alors il 
existe une demi-boule $N$ incluse dans $M$, un difféomorphisme $\Phi$ de $\bar N$
induisant l'identité sur $\partial N\cup\partial_{\infty}N$, 
une suite de métriques asymptotiquement hyperboliques $g_k$ sur $N$, à développement polyhomogène fini,
et un couple $(\delta,\eta)$ de réels strictement positifs tels que
$$ \forall k\in\NM,\quad \Phi^*g - g_k \in C^{\infty}_{\mu_++a_k+\delta,\eta}(N)  .$$
De plus, la même estimation est valable pour toutes les dérivées transverses.
\end{theo}

{\flushleft\it Preuve du théorème \ref{th-polyhom}}. --
L'existence de la suite $(\phi_k)$ se démontre une nouvelle fois par récurrence. Profitons en
pour rappeler que la convention énoncée au début de la section \ref{sec:newcinq} est toujours en vigueur.

\subsection{Amorce} 
Elle est immédiate car on commence la récurrence en $k=-1$ en ajoutant aux définitions de l'énoncé $\phi_{-1}=\bar\phi$
(soit $\psi_{-1}=0$) et $a_{-1}=-1$. Le seul point à démontrer est l'estimation (3) sur les dérivées transverses du reste,
qui est une conséquence des résultats de la section précédente. De fait, un développement au premier ordre de 
l'opérateur $F$ au point $g_0$ donne
$$ 0 = F(\phi) = F(\bar{\phi}) + dF_{\bar{\phi}}(r_{-1}) + q_{-1}(r_{-1}). $$
Or
\begin{enumerate}
\item $F(\bar{\phi})\in\mathcal{A}_{\mu_++a_0+\delta,\beta}$ (pour un certain $\delta>0$) modulo 
$C^{\infty}_{\infty,\beta}$, par construction~;
\item $dF_{\bar{\phi}} = L + (dF_{\bar{\phi}} - L)$, où $\ell_{-1} := dF_{\bar{\phi}} - L$ est un
opérateur différentiel linéaire à coefficients polyhomogènes et
qui appartient à $e^{-s}\mathcal{Q}_1$ dans le cas réel ou $e^{-s/2}\mathcal{Q}$ dans le cas complexe~;
\item le reste $q_{-1}$ est un opérateur polyhomogène au moins quadratique au sens 
de la section précédente.
\end{enumerate}
On peut alors écrire
$$ L(r_{-1}) =  - F(\bar{\phi}) - \ell_{-1}(r_{-1}) - q_{-1}(r_{-1}) $$
et l'opérateur $L$ est redevable du résultat du lemme \ref{lemm:Pbis}
avec nos choix de métriques car $\tilde{A}+\mathcal{R}_0$ est positif, voir par exemple \cite{Biq00}.
Les résultats de la section \ref{sec:newcinq} fournissent alors l'existence d'un $\delta>0$ tel que
$$Q(\nabtrans)^p r_{-1} \in C^{0}_{\mu_+ + \delta,\beta}$$
pour tout $p\in\NM$ et tout $Q$ dans $\mathcal{Q}$. 

\smallskip

\subsection{Récurrence}
Supposons maintenant les $\phi_j$ construits pour $-1\leq j\leq k$ de telle sorte qu'ils
vérifient les propriétés (1--3) du théorème \ref{th-polyhom}. Le calcul-clé, que nous avons déjà utilisé, est le suivant~:
\begin{align*}
0 \ = \ F(\phi) & = \ F(\phi_k) \, + \, dF_{\phi_k}(r_k)  \, + \, q_{k} (r_k) \\
  & = \ F(\phi_k) \, + \, L(r_k) \, + \, (dF_{\phi_k}-L)(r_k)  \, + \, q_{k} (r_k) 
\end{align*}
où $q_k$ est l'opérateur obtenu à partir de $F$ en lui ôtant son linéarisé en $\phi_k$ et $L$ est, comme
précédemment, l'opérateur linéarisé en $g_0$, terme dominant de $\phi_k$. Il s'agit donc d'un opérateur polyhomogène  
au moins quadratique au sens de la section précédente. Ceci conduit à
\begin{equation*}
\begin{split}
\opI(r_k) \ = & - \, [F(\phi_k)]_{\mu_++a_{k+1}} \ - \ \left( F(\phi_k) - [F(\phi_k)]_{\mu_++a_{k+1}} 
\right) \\
 & \ \ - \Ltrans (r_k) \ - \ (dF_{\phi_k} - L )\, (r_k) \ - \ q_{k}\, (r_k).
\end{split}
\end{equation*}
Cette équation s'écrit de manière condensée sous la forme 
$$ \opI(r_k) \ = \ - \, [F(\phi_k)]_{\mu_++a_{k+1}} \ - \  e_{k+1} $$
où l'on a regroupé tous les termes du membre de droite sauf le premier dans $e_{k+1}$.
En utilisant les différentes hypothèses de la récurrence et le fait que $\mu_+\geq 1$, on voit de 
plus aisément que 
$$ e_{k+1} \in C^{0}_{\mu_+ + a_{k + 1} + \eps,\beta} $$
pour un $\eps>0$~: l'assertion (3) de l'hypothèse de récurrence et $\Ltrans\in\mathcal{Q}_0$ 
permettent de traiter le terme en $\Ltrans$, les autres estimations sont 
immédiates~; remarquer néanmoins une fois encore qu'on utilise $\mu_+\geq 1$ (hypothèse dont nous avons déjà noté
qu'elle est heureusement vérifiée dans les deux situations qui nous intéressent).
Pour définir $\psi_{k+1}$, il nous faut maintenant considérer différents cas suivant 
la position de $\mu_++a_k$ et $\mu_++a_{k+1}$ par rapport aux poids critiques supérieurs.

\smallskip

{\flushleft\it Cas 1 : $\mu_+ + a_k \geq \mu_+^{max}$}. On a donc $\mu_+ + a_k + \delta > \mu_+^{max} $ 
et le fait que $r_k$ soit dans $C^{0}_{\mu_++a_k+\delta,\beta}$ pour un $\delta>0$ entraîne immédiatement que 
$$ r_k \ = \ G_{\infty} ( [F(\phi_k)]_{\mu_++a_{k+1}} ) \ + \ G_{\infty} (e_{k+1}).$$
On pose alors 
$$\psi_{k+1} \ = \ G_{\infty} ( [F(\phi_k)]_{\mu_++a_{k+1}} ) \ \in \ \dA[\mu_++a_{k+1}]_\beta$$
où nous avons noté ici $G_{\infty}$ l'opérateur qui est égal sur chaque espace propre de $\widetilde{A} + \cR_0$ 
à l'opérateur scalaire $G_{\infty}$ défini plus haut avec le couple de poids 
$(\mu_-^{(i)},\mu_+^{(i)})$ correspondant à l'espace propre considéré.

Comme $\mu_+ + a_{k+1} > \mu_+^{max}$, tout échange entre dérivations transverses et intégrales entre $r$ et 
$+\infty$ est licite (convergence dominée évidente) donc $\psi_{k+1}$ est polyhomogène à coefficients 
\emph{lisses} et les coefficients vivent dans l'espace de poids $\beta$ sur les tranches. 
On définit alors $\phi_{k+1} = \phi_k + \psi_{k+1}$, et le reste des notations à l'avenant.
L'assertion (2) provient alors d'un développement de Taylor à l'ordre $1$ de $F(\phi_{k+1})$, analogue
à celui fait plus haut~:
\begin{equation*}
\begin{split}
F(\phi_{k+1} )  & = \ F (\phi_k + \psi_{k+1})\\
  & = \ F(\phi_k) \ + \ (dF_{\phi_k})(\psi_{k+1}) \ + \ q_k(\psi_{k+1}) \\ 
  & = \ F(\phi_k) \ + \  L(\psi_{k+1}) \ + \ (dF_{\phi_k} - L) (\psi_{k+1}) \ + 
      \ q_{k}(\psi_{k+1}),
\end{split}
\end{equation*}
où $q_k$ est l'opérateur (polyhomogène au moins quadratique au sens de la section précédente) 
obtenu en ôtant à $F$ sa valeur et son linéarisé en $\phi_k$. Ainsi, 
\begin{equation*}
\begin{split}
F(\phi_{k+1} )  & = \ [F(\phi_k)]_{\mu_+ + a_{k+1}} \ + \ \opI(\psi_{k+1}) \\
  & \ \ \ \ \ + \ F(\phi_k)- [F(\phi_k)]_{\mu_++a_{k+1}} \\
  & \ \ \ \ \ + \ \Ltrans(\psi_{k+1}) \ + \ (dF_{\phi_k} - L) (\psi_{k+1})
\ + \ q_{k}(\psi_{k+1}).
\end{split}
\end{equation*}
Dès lors,
\begin{enumerate}
\item la première ligne dans le membre de droite est nulle par construction de $\psi_{k+1}$~; 
\item comme $F(\phi_{k})$ est $L$-polyhomogène, le terme apparaissant 
à la deuxième ligne est dans $\dA_{\mu_+ + a_{k+2},\beta}$~: en effet, l'opérateur $F$ est
$$\phi \longmapsto \Ric^\phi + c\phi + (\delta^{\bar\phi})^*B_{\bar\phi}(\phi) $$
et le calcul explicite de $F(\phi_{k})$  (expression déjà étudiée précédemment) 
fait intervenir des dérivées radiales du terme polyhomogène $\phi_k$ (qui conservent les poids),
des dérivées transverses (qui produisent un gain d'un poids dans le cas réel ou d'un demi-poids dans le cas complexe), 
et des produits de ces mêmes termes, le tout modulo $C^{\infty}_{\infty,\beta}$. Au final, il apparaît un terme dominant,
nécessairement de poids $\mu_++a_{k+1}$ par hypothèse de récurrence, et des termes complémentaires dont les poids sont obtenus
par des sommes finies des différents $\mu_++a_i$ pour $i\leq k$ et d'entiers dans le cas réel, resp. de demi-entiers 
dans le cas complexe. Par définition de la suite $a_k$, tous ces poids sont supérieurs ou égaux au plus petit élément du
monoïde strictement supérieur à $\mu_++a_{k+1}$ soit $\mu_++a_{k+2}$~;
\item enfin, $\psi_{k+1}$ étant $L$-polyhomogène 
et dans $\dA[\mu_++a_{k+1}]_\beta$, tous les autres termes sont dans $\dA_{\mu_++a_{k+1}+1}$ dans le cas
réel et $\dA_{\mu_++a_{k+1}+1/2}$ dans le cas complexe car $dF_{\phi_k}-L$ 
est dans $e^{-s}\mathcal{Q}_1$ dans le cas réel ou  $e^{-s/2}\mathcal{Q}_1$ dans le cas complexe, $q_k$
préserve les poids et $\mu_+\geq 1$. 
\end{enumerate}
Dans tous les cas, 
$$F(\phi_{k+1})\in \dA_{\mu_++a_{k+2},\beta}  \ \textrm{ mod. } C^{\infty}_{\infty,\beta} .$$ 
Démontrons enfin l'assertion (3), en étudiant d'un peu plus près 
$$ e_{k+1} = \left( F(\phi_k) - [F(\phi_k)]_{\mu_++a_{k+1}} \right) \ + \ \Ltrans (r_k) \ + 
\ (dF_{\phi_k} - L )\, (r_k) \ + \ q_{k}\, (r_k). $$ 
On constate alors que
\begin{enumerate}
\item par $L$-polyhomogénéité, toutes les $\nabtrans$-dérivées du premier terme de $e_{k+1}$
sont dans $\dA_{\mu_++a_{k+2},\beta}$ modulo $C^{\infty}_{\infty,\beta}$~;
\item par hypothèse de récurrence sur $r_k$, plus précisément l'assertion (3), toutes celles du deuxième 
terme sont dans $C^0_{\mu_++a_{k}+1+\eta,\beta}$ dans le cas réel, resp. $C^0_{\mu_++a_{k}+1/2+\eps,\beta}$ 
dans le cas complexe, 
pour un certain $\eps>0$ (se souvenir que $\Ltrans$ est dans $\mathcal{Q}_0$ d'où le gain d'un poids, resp. demi-poids... 
--éventuellement avec une correction $-\eta$ à cause de la possible apparition de termes polynômiaux)~;
\item l'assertion (3) sur $r_k$ 
entraîne que toutes les dérivées transverses de $(dF_{\phi_k} - L )\, (r_k)$ sont 
dans le même espace $C^0_{\mu_++a_{k}+1+\eps,\beta}$ dans le cas réel ou $C^0_{\mu_++a_{k}+1/2+\eps,\beta}$ dans le cas complexe 
car $dF_{\phi_k} - L$ est à coefficients polyhomogènes et vit donc dans $e^{-s}\mathcal{Q}_1$ 
ou $e^{-s/2}\mathcal{Q}_1$, d'où de même un gain de presque un ou un demi poids~;
\item enfin, les dérivées transverses de $q_{k}\, (r_k,r_k)$ sont toujours dans ce même espace en
utilisant encore une fois l'assertion (3) pour $r_k$, les hypothèses sur $q_k$ et que $\mu_+\geq 1$.
\end{enumerate}
On en conclut donc que $(\nabtrans)^p e_{k+1}\in C^{0}_{\mu_++a_{k+1}+\delta,\beta}$, pour un $\delta>0$.
En particulier, $G_{\infty}$ commute aux dérivées transverses car $\mu_++a_{k+1}+\delta$ est 
strictement supérieur à $\mu_+^{max}$. Se souvenant que $r_{k+1} = G_{\infty}(e_{k+1})$ 
ici, on a 
$$ \forall p \in\NM,\quad (\nabtrans)^p r_{k+1} \in C^0_{\mu_++a_{k+1}+\delta,\beta}\ ,$$
ce qui est le résultat voulu. 

\smallskip 

{\flushleft\it Cas 2 : $\mu_+ + a_k <  \mu_+^{max}$}. Ce cas ne se rencontre qu'un nombre fini de fois
puisque les poids critiques supérieurs sont en nombre fini et que la suite $(a_k)$ tend
vers l'infini par construction (c'est ici que la $L$-polyhomogénéité et l'ensemble $\NM_L$ jouent un rôle
commode). Il se divise de plus en deux sous-cas suivant que $[\mu_+ + a_k + \delta,\mu_+ + a_{k+1}]$ contient ou 
non un poids critique (supérieur).

\smallskip

{\flushleft\it Sous-cas 2.1 : $[\mu_+ + a_k + \delta,\mu_+ + a_{k+1}]$ ne contient pas de $\mu_+^{(i)}$}.
On a alors 
$$ r_k \ = \ G( [F(\phi_k)]_{\mu_++a_{k+1}} ) \ + \ G(e_{k+1}) \ + \ A(s)$$
où, sur le $i$-ème espace propre de $A +\cR_0$, 
$$G = 
\begin{cases}
G_0  & \textrm{ si } \mu_++a_{k+1}<\mu_+^{(i)}\\
G_{\infty} & \textrm{ sinon}
\end{cases}$$ 
ces deux opérateurs étant définis avec le couple de poids $(\mu_-^{(i)},\mu_+^{(i)})$ correspondant à l'espace propre, 
et le terme $A(s)$ est une somme finie de la forme 
$$ \sum_i A_i\, e^{-\mu_+^{(i)}s} $$ 
où les $A_i$ sont des sections sur le bord à l'infini et $i$ parcourt seulement les espaces propres 
tels que $\mu_++a_{k+1}<\mu_+^{(i)}$.
On peut alors définir
$$ \psi_{k+1} = G([F(\phi_k)]_{\mu_++a_{k+1}} )  \ \in \ \dA[\mu_++a_{k+1}]_\beta ,$$ 
puis $\phi_{k+1}$, $r_{k+1}$, etc. comme précédemment.
 
Vérifions d'abord que $F(\phi_{k+1})$ est dans $\dA_{\mu_++a_{k+2},\beta}$ comme souhaité. 
On commence par remarquer que $\psi_{k+1}$ est une somme de termes de deux types~: 
\begin{itemize}
\item[-] des intégrales entre $r$ et $+\infty$ là où 
$\mu_++a_{k+1}>\mu_+^{(i)}$, donc où tous les échanges entre intégrales et dérivations transverses 
sont licites~;
\item[-] des intégrales sur le segment $[r_0,r]$ où ces mêmes dérivations ne posent évidemment pas de
problèmes.
\end{itemize}
On écrit alors de nouveau
\begin{equation*}
\begin{split}
F(\phi_{k+1} )  & = \ [F(\phi_k)]_{\mu_+ + a_{k+1}} \ + \ \opI(\psi_{k+1}) \\
  & \ \ \ \ \ + \ F(\phi_k)- [F(\phi_k)]_{\mu_++a_{k+1}} \\
  & \ \ \ \ \ + \ \Ltrans(\psi_{k+1}) \ + \ (dF_{\phi_k} - L) (\psi_{k+1})
\ + \ q_{k}(\psi_{k+1})
\end{split}
\end{equation*}
et on constate que les mêmes arguments que ceux décrits dans le cas 1 fournissent que
$$F(\phi_{k+1})\in \dA_{\mu_++a_{k+2},\beta}.$$ 
Une étude plus poussée du terme $A(s)$ est nécessaire pour démontrer l'asser­tion (3). Il s'agit 
d'un élément du noyau de l'opérateur indiciel, qui ne possède pour l'instant aucune régularité particulière 
en les directions transverses, et la décroissance de ses éventuelles dérivées transverses (si elles existent)
n'est pas contrôlée. La méthode permettant d'obtenir cette régularité est identique dans les deux sous-cas 
2.1 et 2.2~; elle entraîne immédiatement, comme nous le verrons, que l'assertion (3) de l'hypothèse de 
récurrence est vérifiée au cran $k+1$. Nous repoussons donc au paragraphe \ref{subsec7.3} ci-dessous 
la preuve de ces faits et nous nous intéressons d'abord au second sous-cas.
 
\smallskip

{\flushleft\it Sous-cas 2.2 : $[\mu_+ + a_{k} + \delta,\mu_+ + a_{k+1}]$ contient un poids critique 
supérieur}, qui est nécessairement égal à $\mu_++a_{k+1}$ par définition de la suite $(a_k)$.
On peut alors écrire comme précédemment
$$ r_k \ = \ G( [F(\phi_k)]_{\mu_++a_{k+1}} ) \ + \ G(e_{k+1}) \ + \ A_0 e^{-(\mu_++a_{k+1})s} \ + \ A(s)$$
où $A_0$ est une section sur le bord à l'infini (pour l'instant sans régularité)~: comme précédemment, 
on a sur le $i$-ème espace propre de $A +\cR_0$
$$G = 
\begin{cases}
G_0  & \textrm{ si } \mu_++k+\eps\leq \mu_+^{(i)} \\
G_{\infty} & \textrm{ sinon} 
\end{cases}$$ 
et le terme $A(s)$ est défini comme dans le sous-cas 2.1. On pose alors 
$$ \psi_{k+1} = G([F(\phi_k)]_{\mu_++a_{k+1}} ) \ + \ A_0 e^{-(\mu_++a_{k+1})s} $$
(attention, ici le premier des deux termes de droite peut être nul), etc.
L'étape suivante est d'obtenir la régularité transverse de $A_0$ et des termes intervenant dans $A(s)$.
Comme dans le sous-cas précédent, nous repoussons cette preuve au paragraphe \ref{subsec7.3} ci-dessous~; 
l'assertion (3) s'en déduira immédiatement. Une fois cette étape
réalisée, nous pouvons vérifier que $F(\phi_{k+1})$ est dans $\dA_{\mu_++k+2,\beta}$, ce qui s'obtient 
en reprenant mot pour mot les raisonnement déjà tenus dans le cas 1 et le sous-cas 2.1. 
On notera néanmoins que des dérivées de $\psi_{k+1}$ interviennent 
dans les arguments menant à la décroissance de $F(\phi_{k+1})$~; comme le terme en 
$A_0 e^{-(\mu_++a_{k+1})s}$ est partie prenante de $\psi_{k+1}$, il est impératif d'avoir obtenu 
la régularité transverse auparavant pour conclure.

\smallskip

\subsection{Régularité transverse et assertion (3) dans les sous-cas 2.1 et 2.2}\label{subsec7.3}
Comme plus haut, nous décomposons $r_{k+1}$ et $\psi_{k+1}$ sur les espaces propres de $\tilde A +\cR_0$, de poids
critiques associés $(\mu_-^{(i)},\mu_+^{(i)})$. Les composantes d'une section $u$ seront alors notées
$u^{(i)}$. 

Sur les espaces propres où $\mu_++a_{k}\geq\mu_+^{(i)}$,
$$ \psi_{k+1}^{(i)} = G_{\infty}\left( [F(\phi_k)]_{\mu_++a_{k+1}}^{(i)} \right) $$
est donc polyhomogène à coefficients lisses, 
$$ r_{k+1}^{(i)} = G_{\infty}(e_{k+1}^{(i)}) $$
et la régularité transverse et les contrôles des dérivées transverses sont donc obtenus comme dans le 
cas 1.

Sur les espaces propres où $\mu_++a_{k+1}<\mu_+^{(i)}$, 
$$ \psi_{k+1}^{(i)} = G_{0}\left( [F(\phi_k)]_{\mu_++a_{k+1}}^{(i)} \right) $$
est polyhomogène à coefficients lisses (l'intégrale potentiellement problématique est une fois encore une
intégrale sur un segment). De plus, 
$$ r_{k+1}^{(i)} = r_k^{(i)} - \psi_{k+1}^{(i)} =  G_{0}(e_{k+1}^{(i)}) + A_i e^{-\mu_+^{(i)}s} $$
où $A_i$ est un terme ne dépendant que des variables transverses et qui est pour l'instant sans régularité particulière.
Considérons cependant cette expression sur la tranche $s=s_0$~: $r_k^{(i)}$ est lisse en les variables transverses par 
hypothèse de récurrence, $\psi_{k+1}^{(i)}$ l'est aussi comme on vient de le remarquer, le terme en 
$e^{-\mu_-^{(i)}s}$ dans $G_{0}(e_{k+1}^{(i)})$ l'est également, et le second terme intégral est nul
puisque $s=s_0$. Donc $A_i$ est nécessairement lisse en les directions transverses (un contrôle de la décroissance
des dérivées transverses sera obtenu un peu plus loin).

Il nous reste enfin à étudier dans le sous-cas 2.2 l'espace propre dont le poids critique supérieur est égal à 
$\mu_++a_{k+1}$. Nous notons ce poids critique $\mu_+^{(0)}$ par facilité, l'espace propre correspondant 
étant associé à l'indice $i=0$. On a alors
$$ r_{k+1}^{(0)} = G_{0}(e_{k+1}^{(0)}) $$
et ce terme est donc lisse en les directions transverses (une fois encore, l'intégrale potentiellement 
problématique est sur un segment). De plus,
$$ \psi_{k+1}^{(0)} = r_{k+1}^{(0)} - r_k^{(i)} = G_{0}\left( [F(\phi_k)]_{\mu_++a_{k+1}}^{(0)} \right) 
+ A_0 e^{-\mu_+^{(0)}s} $$
et on obtient la régularité de $A_0$ comme dans le cas précédent en fixant $s=s_0$, car $r_k^{(0)}$ est
là encore lisse en les directions transverses par hypothèse de récurrence.

\smallskip

Démontrons maintenant les estimées attendues sur les dérivées transverses des $r_{k+1}^{(i)}$ dans les 
deux derniers cas. Elles sont immédiates pour les termes en les $A_i$ puisqu'ils sont polyhomogènes, 
vivent dans $\dA_{\mu_+^{(i)},\beta}$ avec $\mu_+^{(i)}>\mu_++a_{k+1}$, et sont lisses en les directions 
transverses~: toute dérivée transverse est donc de même poids. Il ne reste donc qu'à estimer les
dérivées transverses 
$$ (\nabtrans)^p \, G_{0}(e_{k+1}^{(i)}) = G_{0}\left( (\nabtrans)^p \,e_{k+1}^{(i)} \right) \ ,$$
où $i$ est éventuellement égal à $0$. L'échange des dérivées et des intégrales étant autorisé puisque
le terme potentiellement problématique est en réalité une fois encore une intégrale sur un segment,
les estimations s'obtiennent alors en remarquant que pour toute fonction $f$ de l'espace à simple poids $C^{0}_{\alpha}$,
$$ \left| e^{-\theta s}\int_{+\infty}^{s} e^{\theta \varsigma} f d\varsigma \right| \leq C e^{-\alpha s}$$
si $\theta\neq \alpha$, et
$$ \left| e^{-\alpha s} \int_{s_0}^s e^{\alpha \varsigma} f d\varsigma \right| \leq C s e^{\alpha s} .$$

%
%
%
%
%
%
%
%

\section{Continuation unique dans le cas complexe}\label{sec:huit}

La démonstration de la continuation unique est analogue à celle dans
le cas réel \cite{Biq08}, donc nous ne donnerons pas tous les
détails. Il y a trois ingrédients :
\begin{enumerate}
\item l'existence d'une « équation spéciale » du bord, qui permet de
  mettre les métriques dans une jauge géodésique ;
\item la coïncidence à un ordre infini des deux métriques $g_1$ et
  $g_2$ dans la jauge géodésique, qui résulte de la polyhomogénéïté et
  du calcul des poids critiques supérieurs de la linéarisation $L$ ;
\item la coïncidence de $g_1$ et $g_2$ grâce aux estimations de
  Carleman, utilisées via la méthode de \cite{Biq08}.
\end{enumerate}

Commençons donc par (1). Une équation spéciale du bord
est une fonction $\upsilon$, s'annulant au bord $\{u=0\}$, telle que
$|d\upsilon|_{\upsilon^2g}=1$, ce qui s'écrit encore $|\frac{d\upsilon}\upsilon|_g=1$. On cherche
$\upsilon$ sous la forme $\upsilon=e^{2\phi}u$, d'où vient l'équation
\begin{equation}
  \label{eq:62}
  \big\langle \frac {d\phi}{\sqrt u} , \frac{du}u \big\rangle_g
 + \frac{|d\phi|_g^2}{\sqrt u}
 = \frac 1{4\sqrt u}\Big(1 - \big|\frac{du}u\big|_g^2\Big), \quad \phi|_{u=0}=0.
\end{equation}
Compte tenu du contrôle de la métrique obtenu au lemme
\ref{lem:modele-local}, le second membre est borné, en réalité
$O(v)$. D'un autre côté, le membre de gauche est une équation aux
dérivées partielles d'ordre 1 sur $\phi$, se réduisant à
$\frac{\partial\phi}{\partial\sqrt u}$ sur le bord, donc l'hypersurface ${\sqrt u=0}$
n'est pas caractéristique. (Le lecteur aura remarqué que pour résoudre
l'équation, il faut considérer $\sqrt u$ plutôt que $u$ comme équation
lisse du bord). Compte tenu de la forme de la métrique, aucune
singularité ne provient du bord $\{v=0\}$, et on déduit qu'existe, au
voisinage du bord, une unique solution $\phi$ de l'équation
(\ref{eq:62}), satisfaisant
$$ \phi=O(\sqrt u v). $$
(On peut borner aussi les dérivées de $\phi$).

En utilisant les niveaux de la nouvelle fonction $\upsilon$, la métrique est
exprimée sous la forme 
\begin{equation}
g=\frac{d\upsilon^2}{\upsilon^2}+g_\upsilon,\label{eq:68}
\end{equation}
où $g_\upsilon$ est une famille de métriques sur le bord à l'infini,
satisfaisant les mêmes conditions asymptotiques que dans le lemme
\ref{lem:modele-local}.  En outre, la seconde forme fondamentale $\II$
des tranches $\{\upsilon=\mathrm{cst}.\}$ est uniformément bornée, ainsi que ses
dérivées, et a asymptotiquement pour valeurs propres $1$ et $\frac 14$ :
\begin{equation}
  \label{eq:67}
  |\nabla^k\II| \leq c_k , \quad \II \longrightarrow
  \begin{pmatrix}
    1 & & \\ & \tfrac 14 & \\ & & \ddots
  \end{pmatrix}.
\end{equation}

Enfin, si deux métriques $g_1$ et $g_2$ coïncident à l'ordre
$o(e^{-(m+1)r})=o(x^{m+1})$, alors après mise en jauge géodésique il
est clair que la coïncidence persiste, avec
$$ |g_1-g_2| = o((\upsilon v^2)^{m+1}). $$

Passons à (2). L'existence d'un développement polyhomogène de la
métrique dans une jauge de Bianchi a pour conséquence que la métrique
garde un développement polyhomogène dans la jauge géodésique obtenue
grâce à une équation spéciale. En outre, les poids critiques pour ce
développement sont nécessairement inclus dans ceux du développement en
jauge de Bianchi, c'est-à-dire sont des poids critiques supérieurs de
la linéarisation $L$. Un calcul facile basé sur les formules de
théorie des représentations de \cite{Biq00} donnne les valeurs $m$,
$\frac12(m+\sqrt{m^2+8})$, $\frac12(m+\sqrt{m^2+2m+5})$ et $m+1$, donc
le plus grand poids critique est $m+1$. La coïncidence jusqu'à l'ordre
$o(\upsilon^{m+1})$ implique donc la coïncidence jusqu'à l'ordre infini:
$O(\upsilon^\infty)$. (Une autre possibilité pour comprendre les poids critiques
en jauge géodésique consiste à utiliser le calcul par Seshadri
\cite[§4]{Ses09} de l'équation d'Einstein en jauge géodésique).

Enfin, examinons la dernière étape (3) : une fois qu'on a la
coïncidence à un ordre infini en jauge géodésique, et les estimations
de Carleman, la démonstration de \cite{Biq08} s'applique verbatim, et
nous ne la répéterons pas. Le dernier pré-requis pour la preuve est
donc l'estimation de Carleman : si $\Delta$ est un opérateur elliptique
géométrique du second ordre, agissant sur les sections d'un fibré, de
type $\nabla^*\nabla+$termes d'ordre inférieur bornés, alors il existe $\upsilon_0$ et
$C$ tel que, pour toute section $f$ à support compact dans
$\{0<\upsilon<\upsilon_0\}$, qui est $O(v)$ ainsi que ses dérivées, et pour tout $\lambda\gg
0$, on ait
\begin{equation}
  \label{eq:66}
  \int_{0<\upsilon<\upsilon_0} |\Delta f|^2 \upsilon^{-\lambda} \vol_g
\geq C \int_{0<\upsilon<\upsilon_0} \big( \lambda^3 |f|^2 + \lambda |\nabla f|^2 + \lambda^{-1} |\nabla^2f|^2 \big)
\upsilon^{-\lambda} \vol_g .
\end{equation}
L'estimation de Carleman dans ce contexte est connue depuis longtemps,
on en trouvera une démonstration sous les hypothèses géométriques
(\ref{eq:68}) et (\ref{eq:67}) dans l'article \cite{VasWun05}. La
différence ici est que les tranches $\{\upsilon=\mathrm{cst}\}$ sont non
compactes, mais comme elles sont complètes cela n'empêche pas les
intégrations par parties, possibles vu l'hypothèse sur $f$ quand $v\to0$.

%
%
%
%
%
%
%
%

\bibliographystyle{smfplain}
\bibliography{biblio,biquard,phg}

\providecommand{\bysame}{\leavevmode ---\ }
\providecommand{\og}{``}
\providecommand{\fg}{''}
\providecommand{\smfandname}{et}
\providecommand{\smfedsname}{\'eds.}
\providecommand{\smfedname}{\'ed.}
\providecommand{\smfmastersthesisname}{M\'emoire}
\providecommand{\smfphdthesisname}{Th\`ese}
\begin{thebibliography}{10}

\bibitem{And05}
{\scshape M.~Anderson} -- {\og {Geometric aspects of the AdS/CFT
  correspondence}\fg}, \emph{{AdS/CFT correspondence: Einstein metrics and
  their conformal boundaries}} (O.~Biquard, \smfedname), IRMA Lectures in
  Mathematics and Theoretical Physics, vol.~8, European Mathematical Society,
  2005, p.~1--31.

\bibitem{Biq00}
{\scshape O.~Biquard} -- {\og M\'etriques d'{E}instein asymptotiquement
  sym\'etriques\fg}, \emph{Ast\'erisque} \textbf{265} (2000), p.~vi+109,
  English translation: SMF/AMS Texts and Monographs 13 (2006).

\bibitem{Biq08}
\bysame , {\og Continuation unique \`a partir de l'infini conforme pour les
  m\'etriques d'{E}instein\fg}, \emph{Math. Res. Lett.} \textbf{15} (2008),
  no.~6, p.~1091--1099.

\bibitem{BiqHer05}
{\scshape O.~Biquard {\normalfont \smfandname} M.~Herzlich} -- {\og A
  {B}urns-{E}pstein invariant for {ACHE} 4-manifolds\fg}, \emph{Duke Math. J.}
  \textbf{136} (2005), no.~1, p.~53--100.

\bibitem{BM}
{\scshape O.~Biquard {\normalfont \smfandname} R.~Mazzeo} -- {\og {A nonlinear
  Poisson transform for Einstein metrics on product spaces}\fg},
  arXiv:math/0701868.

\bibitem{ChrDelLeeSki05}
{\scshape P.~T. Chru{\'s}ciel, E.~Delay, J.~M. Lee {\normalfont \smfandname}
  D.~N. Skinner} -- {\og Boundary regularity of conformally compact {E}instein
  metrics\fg}, \emph{J. Differential Geom.} \textbf{69} (2005), no.~1,
  p.~111--136.

\bibitem{FefGra85}
{\scshape C.~Fefferman {\normalfont \smfandname} C.~R. Graham} -- {\og
  Conformal invariants\fg}, \emph{Ast\'erisque} (1985), no.~hors s\'erie,
  p.~95--116, The mathematical heritage of \'Elie Cartan (Lyon, 1984).

\bibitem{FG}
{\scshape C.~L. Fefferman {\normalfont \smfandname} C.~R. Graham} -- {\og The
  ambiant metric\fg}, arXiv:0710/0919.

\bibitem{Gol99}
{\scshape W.~M. Goldman} -- \emph{Complex hyperbolic geometry}, Oxford
  Mathematical Monographs, The Clarendon Press Oxford University Press, New
  York, 1999, Oxford Science Publications.

\bibitem{Gra00}
{\scshape C.~R. Graham} -- {\og Volume and area renormalizations for
  conformally compact {E}instein metrics\fg}, \emph{{The Proceedings of the
  19th Winter School ``Geometry and Physics'' (Srn\'\i, 1999)}}, Rend. Circ.
  Mat. Palermo (2) Suppl., no.~63, 2000, p.~31--42.

\bibitem{GraLee91}
{\scshape C.~R. Graham {\normalfont \smfandname} J.~M. Lee} -- {\og Einstein
  metrics with prescribed conformal infinity on the ball\fg}, \emph{Adv. Math.}
  \textbf{87} (1991), no.~2, p.~186--225.

\bibitem{Hel08}
{\scshape D.~W. Helliwell} -- {\og Boundary regularity for conformally compact
  {E}instein metrics in even dimensions\fg}, \emph{Comm. Partial Differential
  Equations} \textbf{33} (2008), no.~4-6, p.~842--880.

\bibitem{LeeMel82}
{\scshape J.~M. Lee {\normalfont \smfandname} R.~Melrose} -- {\og Boundary
  behaviour of the complex {M}onge-{A}mp\`ere equation\fg}, \emph{Acta Math.}
  \textbf{148} (1982), p.~159--192.

\bibitem{Ses09}
{\scshape N.~Seshadri} -- {\og Approximately {E}instein {ACH} metrics, volume
  renormalization, and an invariant for contact manifolds\fg}, \emph{Bull. Soc.
  Math. France} \textbf{137} (2009), no.~1, p.~63--91.

\bibitem{VasWun05}
{\scshape A.~Vasy {\normalfont \smfandname} J.~Wunsch} -- {\og Absence of
  super-exponentially decaying eigenfunctions of {R}iemannian manifolds with
  pinched negative curvature\fg}, \emph{Math. Res. Lett.} \textbf{12} (2005),
  no.~5-6, p.~673--684.

\end{thebibliography}

\end{document}